\begin{document}

	
	\renewcommand{\d}{d}
	
	\newcommand{\E}{\mathbb{E}}
	\newcommand{\PP}{\mathbb{P}}
	\newcommand{\DD}{\mathbb{D}}
	\newcommand{\R}{\mathbb{R}}
	\newcommand{\cD}{\mathcal{D}}
	\newcommand{\cF}{\mathcal{F}}
	\newcommand{\cK}{\mathcal{K}}
	\newcommand{\N}{\mathbb{N}}
	\newcommand{\fracs}[2]{{ \textstyle \frac{#1}{#2} }}
	\newcommand{\sign}{\operatorname{sign}}
	
	\newtheorem{theorem}{Theorem}[section]
	\newtheorem{lemma}[theorem]{Lemma}
	\newtheorem{coro}[theorem]{Corollary}
	\newtheorem{defn}[theorem]{Definition}
	\newtheorem{assp}[theorem]{Assumption}
	\newtheorem{expl}[theorem]{Example}
	\newtheorem{prop}[theorem]{Proposition}
	\newtheorem{proposition}[theorem]{Proposition}
	\newtheorem{corollary}[theorem]{Corollary}
	\newtheorem{rmk}[theorem]{Remark}
	\newtheorem{notation}[theorem]{Notation}

	\def\a{\alpha} \def\g{\gamma}
	\def\e{\varepsilon} \def\z{\zeta} \def\y{\eta} \def\o{\theta}
	\def\vo{\vartheta} \def\k{\kappa} \def\l{\lambda} \def\m{\mu} \def\n{\nu}
	\def\x{\xi}  \def\r{\rho} \def\s{\sigma}
	\def\p{\phi} \def\f{\varphi}   \def\w{\omega}
	\def\q{\surd} \def\i{\bot} \def\h{\forall} \def\j{\emptyset}
	
	\def\be{\beta} \def\de{\delta} \def\up{\upsilon} \def\eq{\equiv}
	\def\ve{\vee} \def\we{\wedge}
	
	\def\t{\tau}
	
	\def\F{{\cal F}}
	\def\T{\tau} \def\G{\Gamma}  \def\D{\Delta} \def\O{\Theta} \def\L{\Lambda}
	\def\X{\Xi} \def\S{\Sigma} \def\W{\Omega}
	\def\M{\partial} \def\N{\nabla} \def\Ex{\exists} \def\K{\times}
	\def\V{\bigvee} \def\U{\bigwedge}
	
	\def\1{\oslash} \def\2{\oplus} \def\3{\otimes} \def\4{\ominus}
	\def\5{\circ} \def\6{\odot} \def\7{\backslash} \def\8{\infty}
	\def\9{\bigcap} \def\0{\bigcup} \def\+{\pm} \def\-{\mp}
	\def\<{\langle} \def\>{\rangle}
	
	\def\lev{\left\vert} \def\rev{\right\vert}
	\def\1{\mathbf{1}}

	\newcommand\wD{\widehat{\D}}
	\newcommand\EE{\mathbb{E}}
	
	\newcommand{\ls}[1]{\textcolor{red}{\tt Lukas: #1 }}

	\title{ \bf {Sharp $L^1$-Approximation of the log-Heston SDE by Euler-type methods}}

	\author{Annalena Mickel \footnote{Mathematical Institute and   DFG Research Training Group 1953, 
			University of Mannheim,  B6, 26, D-68131 Mannheim, Germany,  \texttt{amickel@mail.uni-mannheim.de}}
		\and   Andreas Neuenkirch  \footnote{Mathematical Institute, 
			University of Mannheim,  B6, 26, D-68131 Mannheim, Germany, \texttt{aneuenki@mail.uni-mannheim.de}} 
	}

	\date{\today}

	\maketitle
	
	\begin{abstract} 
		We study the $L^1$-approximation of the log-Heston SDE at equidistant time points by  Euler-type methods.
		We establish the convergence order $ 1/2-\epsilon$ for $\epsilon >0$ arbitrarily small,
		if the Feller index $\nu$ of the underlying CIR process satisfies  $\nu > 1$. Thus, we recover the standard convergence order of the Euler scheme for SDEs with globally Lipschitz coefficients.
		Moreover, we discuss the case $\nu \leq 1$ and illustrate our findings by several numerical
		examples.

		\medskip
		\noindent \textsf{{\bf Key words: } \em CIR process, Heston model, Euler-type methods, $L^1$-error \\
		}
		\medskip
		\noindent{\small\bf 2010 Mathematics Subject Classification: 65C30;  60H35;  91G60 }
		
	\end{abstract}
	
	\section{Introduction and Main Results}
	The CIR process originates from the works of Feller in the 1950s, see e.g. \cite{Feller},  and was used by Cox, Ingersoll and Ross \cite{CIR} to model short term interest rates. It is the solution to the following stochastic differential equation (SDE)
	\begin{equation}\label{CIR}
		dV_t = \kappa(\theta - V_t) dt + \sigma \sqrt{V_t} dW_t,  \qquad t\in[0,T],
	\end{equation} 
	where $V_0=v_0>0$ and $W=(W_t)_{t \in [0,T]}$ is a Brownian motion. The parameters can be interpreted as follows: $\theta>0$ is the long run mean of the process, $\kappa>0$ is its speed of mean reversion and $\sigma>0$ is its volatility. We assume the initial value $v_0$ to be deterministic. We denote the Feller index of the CIR process by
	\begin{displaymath}
		\nu := \frac{2\kappa\theta}{\sigma^2}.
	\end{displaymath}
	Note that the CIR process takes positive values only. Additionally, if $\nu \geq 1$ almost all sample paths are strictly positive. 
	The CIR process is used in particular to model the volatility of the asset price in the Heston model \cite{heston}. Here, the SDE for the price process and its volatility are given by 
	\begin{equation}  
		\begin{aligned}
			dS_t &= \mu S_t dt + \sqrt{V_t} S_t  \left(\rho dW_t + \sqrt{1-\rho^2} dB_t\right), \\
			dV_t &= \kappa(\theta - V_t) dt + \sigma \sqrt{V_t} dW_t, \label{hes-eq}
		\end{aligned}\qquad t\in[0,T],
	\end{equation}
	where $S_0=s_0>0$ is deterministic, $\mu \in \mathbb{R}$ is the risk-free interest rate,  $\rho \in [-1,1]$ determines the correlation between the two processes and $W=(W_t)_{t \in [0,T]}$,  $B=(B_t)_{t \in [0,T]}$ are independent Brownian motions.  Usually, the log-Heston model  instead of the Heston model is considered in numerical practice. This yields the SDE
	\begin{equation} 
		\begin{aligned}
			dX_t &= \left(\mu-\frac12 V_t\right) dt + \sqrt{V_t} \left(\rho d W_t + \sqrt{1-\rho^2} d B_t\right), \\
			dV_t &= \kappa (\theta-V_t) dt + \sigma \sqrt{V_t} dW_t, \label{hes-log} 
		\end{aligned}\qquad t\in[0,T],
	\end{equation}
	where $X_t=\log(S_t)$ and $x_0=\log(s_0) \in \mathbb{R}$. Since the  right hand side of the SDE for the log-asset price  does not depend on $X_t$, its approximation reduces to the approximation of  a Riemann integral of $V$ and an It\=o integral of $\sqrt{V}$. So, the main difficulty is the approximation of the CIR process $V=(V_t)_{t\in[0,T]}$. Since the CIR process takes positive values only and the diffusion coefficient is a square root  and thus  not globally Lipschitz  continuous, much effort has been devoted to this problem in the last 25 years, see Subsection \ref{survey-CIR}.
	
	In this article, we are looking at Euler discretization schemes for the CIR process and the log-Heston model. We will work with an equidistant discretization
	\begin{displaymath}
		t_k = k \Delta t, \quad k=0, \ldots,N,
	\end{displaymath}
	with $\Delta t =T/N$. A naive Euler discretization will give negative values and is not well defined
	due to the square root coefficient. Therefore a "fix" is required. A summary of the existing Euler schemes for the CIR process and a numerical comparison can be found in \cite{lord-comparison}, where a general framework for Euler schemes for the CIR process is proposed as
	\begin{equation} \label{cir-euler}
		\begin{aligned}
			\bar{v}_{t_{k+1}} &= f_1(\bar{v}_{t_{k}}) + \kappa\left(\theta-f_2(\bar{v}_{t_{k}})\right)(t_{k+1}-t_k)+\sigma\sqrt{f_3(\bar{v}_{t_k})}\left(W_{t_{k+1}}-W_{t_{k}}\right)\\
			\hat{v}_{t_{k+1}} &= f_3(\bar{v}_{t_{k+1}})
		\end{aligned}
	\end{equation}
	for $k=0, \ldots, N-1$
	with $\hat{v}_0=\bar{v}_0=v_0$ and suitable functions $f_i$ that are chosen from
	\begin{displaymath}
		\begin{aligned}
			{\tt id}: \mathbb{R} \rightarrow \mathbb{R},&  \quad {\tt id}(x)=x,	
			\\ 
			{\tt abs}: \mathbb{R} \rightarrow [0, \infty),&  \quad {\tt abs}(x)=x^+, \\
			{\tt  sym}: \mathbb{R} \rightarrow [0, \infty),&  \quad {\tt sym}(x)=|x|.
		\end{aligned}
	\end{displaymath}
	Here we will study the Euler schemes with $f_i$ given by
	\begin{equation} \label{cir-choices1} 
		\begin{aligned}
			f_1={\tt id}, \quad  f_2 \in \{ {\tt id,  abs, sym} \}, \quad 
			f_3 \in \{ {\tt abs, sym} \}
		\end{aligned}
	\end{equation}
	or
	\begin{equation} \label{cir-choices2} 
		\begin{aligned}
			f_1=f_2=f_3 \in \{ \tt abs, sym\}.
		\end{aligned}
	\end{equation}
	The first set of conditions modifies the coefficients of the CIR process to deal with negative values, which may arise in the computation. For example, $\sqrt{v}$ is replaced by $\sqrt{v^+}$ or  $\sqrt{|v|}$. After the approximation $\bar{v}$ has been computed, $f_3$ is again applied to obtain $\hat{v}$, since $\bar{v}$ may be still negative.
	The second set of conditions is different. Here after each Euler step ${\tt abs}$ or ${\tt sym}$, respectively, is applied to avoid negative values. See also Subsection \ref{prelim-schemes1} and Subsection \ref{prelim-schemes2}.
	
	Table \ref{tab:Euler} shows all Euler schemes that are presented in \cite{lord-comparison} in detail. The Full Truncation Euler was introduced in the same paper. The origin of the Euler with Absorption fix is unknown, the Symmetrized Euler was analyzed in \cite{BO} for example. The scheme from Higham and Mao was first analyzed in \cite{HM} and the Partial Truncation Euler was first introduced in \cite{DD}.
	\begin{table}[tbhp]
		{\footnotesize
			\caption{Euler schemes from \cite{lord-comparison}}\label{tab:Euler}
			\begin{center}
				\begin{tabular}{|c|c|c|c|} \hline
					Scheme & $f_1(x)$ & $f_2(x)$ & $f_3(x)$ \\ \hline
					Absorption (AE) & $(x)^+$ & $(x)^+$ & $(x)^+$ \\
					Symmetrized (SE) & $|x|$ & $|x|$ & $|x|$\\ 
					Higham and Mao (HM) & $x$ & $x$ & $|x|$\\ 
					Partial Truncation (PTE) & $x$ & $x$ & $(x)^+$\\ 
					Full Truncation (FTE) & $x$ & $(x)^+$ & $(x)^+$\\ 
					\hline
				\end{tabular}
			\end{center}
		}
	\end{table}
	Results involving a (polynomial) convergence rate for these Euler schemes are rare and usually come along with a strong restriction on the Feller index, see Subsections \ref{survey-CIR} and \ref{survey-logHeston}. In this article, we will prove the $L^1$-convergence rate of $\frac{1}{2}-\epsilon$ for all these schemes if $\nu > 1$ (with $\epsilon >0$ arbitrarily small). Furthermore, we will show that this result carries over to the log-Heston model if the price process is discretized with the standard Euler scheme, i.e. with
	\begin{equation}
		\begin{aligned} \label{heston-euler}
			\hat{x}_{t_{k+1}}=	\hat{x}_{t_{k}} & + \left( \mu - \frac{1}{2} 	\hat{v}_{t_{k}} \right)(t_{k+1}-t_k) \\ & +
			\sqrt{\hat{v}_{t_{k}}} \left(\rho \left(W_{t_{k+1}}-W_{t_{k}}\right)  + \sqrt{1-\rho^2} \left(B_{t_{k+1}}-B_{t_{k}}\right)\right),
		\end{aligned}
	\end{equation}
	where $\hat{x}_{0}=x_0$ and $k=0, \ldots, N-1$.
	
	\begin{theorem}\label{thm:up-bound} Let $\nu>1$, $\epsilon >0$ and $(\hat{v}_{t_k},\hat{x}_{t_{k}})_{k \in \{ 0, \ldots, N \} }$ given by Equations \eqref{cir-euler},  \eqref{cir-choices1}, \eqref{heston-euler} or by Equations \eqref{cir-euler},  \eqref{cir-choices2}, \eqref{heston-euler}. Then we have
		\begin{displaymath}
			\begin{aligned}
				&  \lim_{N \rightarrow \infty} \,\,   N^{1/2-\epsilon}  \left( \max_{k \in \{ 0, \ldots, N \} }   \mathbb{E} \left[   | X_{t_k}- \hat{x}_{t_{k}}|\right] +    \max_{k \in \{ 0, \ldots, N \} }   \mathbb{E}\left[    | V_{t_k}- \hat{v}_{t_{k}}|\right]  \right) =0. 
			\end{aligned}
		\end{displaymath}
	\end{theorem}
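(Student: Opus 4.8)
The plan is to control the two errors separately, exploiting that the log-price error is entirely governed by the quality of the CIR approximation. Since the right-hand side of the $X$-equation in \eqref{hes-log} does not depend on $X$, subtracting the recursion \eqref{heston-euler} from the exact dynamics and writing $\eta(s)=t_j$ for $s\in[t_j,t_{j+1})$ yields the exact representation
\begin{displaymath}
	X_{t_k}-\hat{x}_{t_k} = -\tfrac12\int_0^{t_k}\bigl(V_s-\hat{v}_{\eta(s)}\bigr)\,ds + \int_0^{t_k}\bigl(\sqrt{V_s}-\sqrt{\hat{v}_{\eta(s)}}\bigr)\bigl(\rho\,dW_s+\sqrt{1-\rho^2}\,dB_s\bigr),
\end{displaymath}
so that no Gronwall argument is needed for $X$ and the whole analysis reduces to quadrature errors for $V$ and $\sqrt{V}$. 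Throughout, the basic ingredients available for $\nu>1$ are uniform positive moment bounds $\sup_t\EE[V_t^p]<\infty$ and, decisively, uniform inverse moment bounds $\sup_t\EE[V_t^{-q}]<\infty$ for $q<\nu$, together with uniform moment bounds for $\bar v_{t_k},\hat v_{t_k}$. I would also use that the functions {\tt id}, {\tt abs}, {\tt sym} are all $1$-Lipschitz and that {\tt abs}, {\tt sym} are nonnegative, which permits a treatment uniform over the schemes \eqref{cir-choices1} and \eqref{cir-choices2}.

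The central and most delicate step is the CIR estimate: to establish $\max_{k}\EE[|V_{t_k}-\hat v_{t_k}|^p]^{1/p}=o(N^{-(1/2-\epsilon)})$ for a range of $p\ge1$ determined by $\nu$ (the case $p=1$ already gives the second term in Theorem \ref{thm:up-bound}). For this I would pass to the continuous-time interpolation of \eqref{cir-euler}, form the error process, and handle the non-globally-Lipschitz square-root diffusion via the identity $|\sqrt a-\sqrt b|=|a-b|/(\sqrt a+\sqrt b)$ together with a Yamada--Watanabe type smoothing of the absolute value. The H\"older exponent $1/2$ of the square root and the path regularity $\EE|V_s-V_{\eta(s)}|^p\le C(\Delta t)^{p/2}$ produce the order $1/2$, whereas the loss of $\epsilon$ and the hypothesis $\nu>1$ enter exactly when the singular factor $(\sqrt{V_s}+\cdots)^{-1}$ on the set where $V$ is small is absorbed using the inverse moment bounds. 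I expect this to be the \emph{main obstacle}, since sharpness of the rate rests on balancing the H\"older estimate against the largest admissible inverse moment.

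Granting the CIR rates, the log-price error follows from the representation above. For the Riemann part I would split $V_s-\hat v_{\eta(s)}=(V_s-V_{\eta(s)})+(V_{\eta(s)}-\hat v_{\eta(s)})$; writing $\int_{t_j}^{t_{j+1}}(V_s-V_{t_j})\,ds=\int_{t_j}^{t_{j+1}}(t_{j+1}-u)\,dV_u$ and using the orthogonality of the ensuing martingale increments shows the first summand to be of order $N^{-1}$, while the second is the CIR grid error. For the stochastic part I would split $\sqrt{V_s}-\sqrt{\hat v_{\eta(s)}}=(\sqrt{V_s}-\sqrt{V_{\eta(s)}})+(\sqrt{V_{\eta(s)}}-\sqrt{\hat v_{\eta(s)}})$. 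For the first, pure-quadrature, term I would apply It\^o's formula,
\begin{displaymath}
	d\sqrt{V_u}=\frac{\sigma}{2}\,dW_u+\Bigl(\frac{2\kappa\theta-\sigma^2/2}{4\sqrt{V_u}}-\frac{\kappa}{2}\sqrt{V_u}\Bigr)\,du,
\end{displaymath}
so that the leading contribution $\tfrac{\sigma}{2}(W_s-W_{\eta(s)})$ produces a Milstein-type sum of $L^2$-order $N^{-1/2}$, while the $V^{-1/2}$ drift term is of higher order once the inverse moments are invoked. For the second term, the increments $(\sqrt{V_{t_j}}-\sqrt{\hat v_{t_j}})\Delta W_j$ are orthogonal, so its squared $L^2$-norm equals $\Delta t\sum_j\EE[(\sqrt{V_{t_j}}-\sqrt{\hat v_{t_j}})^2]$; here I would use $(\sqrt a-\sqrt b)^2\le (a-b)^2/a$ to bound the summand by $\EE[|V_{t_j}-\hat v_{t_j}|^2/V_{t_j}]$ and then apply H\"older's inequality with the inverse moment bound $\EE[V_{t_j}^{-q}]<\infty$ and the higher-moment CIR rate from the central step. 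Combining the four pieces through the Burkholder--Davis--Gundy inequality yields the $L^1$-order $1/2-\epsilon$ for the log-price. The hypothesis $\nu>1$ is thus used decisively in two places, namely to render the inverse moments finite both in the CIR estimate and in the control of the stochastic interpolation term.
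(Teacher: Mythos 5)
Your overall architecture (reduce everything to the CIR error, then transfer to the log-price via the explicit integral representation, since the $X$-equation does not depend on $X$) matches the paper's, and your sketch of the CIR estimate --- smoothing of $|\cdot|$, the bound $(\sqrt a-\sqrt b)^2\le (a-b)^2/a$, and absorption of the singular factor via $\sup_t\mathbb{E}[V_t^{-q}]<\infty$ for $q<\nu$ --- is in the right spirit (the paper uses the Tanaka--Meyer formula plus a quantitative local-time bound and optimizes a smoothing parameter $\delta=(\Delta t)^{1/2}$). But there is a genuine gap in your treatment of the martingale part of the log-price error, and it is exactly the point where the paper has to do something nontrivial. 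For the term $\sum_j(\sqrt{V_{t_j}}-\sqrt{\hat v_{t_j}})\,\Delta U_j$ your orthogonality computation gives the squared $L^2$-norm $\Delta t\sum_j\mathbb{E}[(\sqrt{V_{t_j}}-\sqrt{\hat v_{t_j}})^2]\le\Delta t\sum_j\mathbb{E}[|V_{t_j}-\hat v_{t_j}|^2/V_{t_j}]$, and after H\"older with $\mathbb{E}[V^{-r'}]<\infty$ (which forces $r'<\nu$, hence $r>\nu/(\nu-1)$) you need $(\mathbb{E}|V_{t_j}-\hat v_{t_j}|^{2r})^{1/r}=O((\Delta t)^{1-\epsilon})$, i.e.\ an $L^{p}$-rate of order $1/2-\epsilon$ with $p>2\nu/(\nu-1)$. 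This is not delivered by your central step: the Yamada--Watanabe/Tanaka--Meyer machinery is intrinsically an $L^1$ argument, and the paper explicitly records (Subsection \ref{sec:further}) that the $L^p$-rate obtainable from it is only $\min\{1,\nu\}/(2p)$. Interpolating the $L^1$-rate against bounded higher moments gives only $(\mathbb{E}|V-\hat v|^{2r})^{1/r}\le C\,(\mathbb{E}|V-\hat v|)^{1/r}$, which yields an overall rate of order $(\nu-1)/(4\nu)$ for this term --- degenerating to $0$ as $\nu\downarrow1$ rather than approaching $1/2$. Even the strongest known $L^p$ results for these explicit schemes (rate $1/2$ for $2\le p<\nu-1$ and $\nu>3$, and only for FTE) would cover only $\nu>2+\sqrt{3}$.

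The paper avoids this obstacle entirely: since $M_t=\int_0^t(\sqrt{V_s}-\sqrt{\hat v_{\eta(s)}})\,dU_s$ and $\tilde M_t=\int_0^t(\sqrt{V_s}-\sqrt{\hat v_{\eta(s)}})\,dW_s$ have the same quadratic variation, the Burkholder--Davis--Gundy inequalities give $\mathbb{E}[\sup_t|M_t|]\le C\,\mathbb{E}[\sup_t|\tilde M_t|]$, and $\tilde M_T$ --- being driven by the same Brownian motion as the volatility scheme --- can be expressed algebraically as $\sigma^{-1}\bigl(V_T-\bar v_T+\kappa\int_0^T(V_s-f_2(\bar v_{\eta(s)}))\,ds\bigr)$, so that only the $L^1$ CIR rate is needed; a Doob/H\"older interpolation then converts the $L^1$ bound on $\tilde M_T$ into the required sup bound at the cost of an arbitrarily small loss in the exponent. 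You would need to import this idea (or an equivalent one) to close your argument. A secondary omission: for the schemes of Case II (SE, AE) the error process carries additional drift and local-time terms $L^0(z^\star)$ coming from the Tanaka--Meyer expansion of the scheme itself, which require the separate estimates $P(z_t^\star\le0)\le c(\Delta t/\varepsilon)^{\nu(1-\varepsilon)}$ and the associated local-time bounds; a treatment ``uniform over the schemes'' based only on the Lipschitz and positivity properties of ${\tt abs}$ and ${\tt sym}$ does not cover these.
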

	Thus, we recover (up to an arbitrarily small $\epsilon>0$) the standard convergence order of the Euler scheme for SDEs with globally Lipschitz continuous coefficients.

	For the case $\nu \leq 1$ we can obtain e.g.~for the Euler schemes  given by Equations \eqref{cir-euler}, \eqref{cir-choices1}, \eqref{heston-euler} convergence order $\nu/2-\epsilon$.
	However, this estimate does not seem to be sharp, see our simulation study in Section \ref{sec:simulation}.
	\begin{proposition}
		\label{prop:up-bound} Let $\nu \leq 1$, $\epsilon >0$ and $(\hat{v}_{t_k},\hat{x}_{t_{k}})_{k \in \{ 0, \ldots, N \} }$ given by Equations \eqref{cir-euler},  \eqref{cir-choices1}, \eqref{heston-euler}. Then we have
		\begin{displaymath}
			\begin{aligned}
				&  \lim_{N \rightarrow \infty} \,\,   N^{\nu/2-\epsilon}  \left( \max_{k \in \{ 0, \ldots, N \} }   \mathbb{E} \left[   | X_{t_k}- \hat{x}_{t_{k}}|\right] +    \max_{k \in \{ 0, \ldots, N \} }   \mathbb{E}\left[    | V_{t_k}- \hat{v}_{t_{k}}|\right]  \right) =0. 
			\end{aligned}
		\end{displaymath}	
	\end{proposition}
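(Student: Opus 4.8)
The plan is to follow the proof of Theorem~\ref{thm:up-bound} step by step and to isolate the single ingredient in which the hypothesis $\nu>1$ enters, re-deriving it under the weaker assumption $\nu\le1$ at the price of a smaller exponent. The first step is the reduction of the log-price error to the volatility error. Since the right-hand side of the $X$-equation in \eqref{hes-log} does not involve $X$, the difference $X_{t_k}-\hat x_{t_k}$ is the sum of a Riemann-integral term, controlled in $L^1$ by $\int_0^T\EE|V_s-\hat v_{\eta(s)}|\,ds$, and a stochastic-integral term $\int_0^{t_k}(\sqrt{V_s}-\sqrt{\hat v_{\eta(s)}})(\rho\,dW_s+\sqrt{1-\rho^2}\,dB_s)$, which by Burkholder--Davis--Gundy is controlled by $\big(\int_0^T\EE[(\sqrt{V_s}-\sqrt{\hat v_{\eta(s)}})^2]\,ds\big)^{1/2}$; here $\eta(s)=t_k$ for $s\in[t_k,t_{k+1})$. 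Neither reduction uses the Feller index, so both transfer verbatim, and it suffices to re-establish, for $\nu\le1$, the two CIR estimates $\max_k\EE|V_{t_k}-\hat v_{t_k}|\le C_\epsilon\,\Delta t^{\nu/2-\epsilon}$ and $\int_0^T\EE[(\sqrt{V_s}-\sqrt{\hat v_{\eta(s)}})^2]\,ds\le C_\epsilon\,\Delta t^{\nu-\epsilon}$, the second of which feeds the martingale part of the $X$-error at the required order $\Delta t^{\nu/2-\epsilon}$.

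For both CIR estimates I would work with the continuous-time interpolation $\bar v_t$ of the recursion \eqref{cir-euler}--\eqref{cir-choices1}; because $f_1={\tt id}$, this $\bar v_t$ is a genuine It\=o process with drift $\kappa(\theta-f_2(\bar v_{\eta(t)}))$ and diffusion $\sigma\sqrt{f_3(\bar v_{\eta(t)})}$, so that $V_t-\bar v_t$ solves an explicit SDE. Applying It\=o's formula to the Yamada--Watanabe approximants $\phi_m\uparrow|\cdot|$ evaluated at $V_t-\bar v_t$ yields the $L^1$ estimate: the drift difference is absorbed by global Lipschitz continuity of $f_2\in\{{\tt id},{\tt abs},{\tt sym}\}$ together with Gronwall, and the benign part of the second-order term is bounded by $c/m$ through $(\sqrt a-\sqrt b)^2\le|a-b|$. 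What remains are two genuinely singular contributions living near the origin: the negativity defect created by $f_3(\bar v_t)\ne\bar v_t$, i.e. $\EE[(\bar v_t)^-]$, and the one-step time-irregularity of $\sqrt{f_3(\bar v)}$. The same near-zero terms, now without the smoothing cancellation, govern the mean-square estimate: after splitting at a level $\varepsilon$, on $\{V_s>\varepsilon\}$ one exploits $(\sqrt a-\sqrt b)^2\approx(a-b)^2/(4a)$ to trade a factor of $V_s^{-1}$ for an inverse moment, while the complementary event $\{V_s\le\varepsilon\}$ must be absorbed by a small-ball bound.

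The Feller index enters exactly here, through two classical facts about the CIR process: the small-ball bound $\PP(V_t\le\varepsilon)\le C\varepsilon^{\nu}$, reflecting the density behaviour $\sim x^{\nu-1}$ at the origin, and the inverse-moment bound $\sup_{t\le T}\EE[V_t^{-p}]<\infty$, which holds precisely for $p<\nu$. When $\nu>1$ one may take $p$ above $1$, and this is what powers the full rate; when $\nu\le1$ only exponents $p<\nu$ are admissible, so the truncated inverse moment can no longer be bounded uniformly and one must instead balance it against the small-region contribution $\varepsilon^{\nu}$. Optimizing the truncation radius $\varepsilon$ (equivalently the Yamada--Watanabe level $m$) against $\Delta t$ under the constraint $p<\nu$ replaces every $\Delta t^{1/2}$ by $\Delta t^{\nu/2}$ in the $L^1$ estimate and every $\Delta t$ by $\Delta t^{\nu}$ in the mean-square estimate; the arbitrarily small loss $\epsilon$ comes from keeping $p$ strictly below the critical value $\nu$ and from the logarithmic factors in the Yamada--Watanabe construction.

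The main obstacle is precisely this sharp near-zero analysis: showing that the negativity defect $\EE[(\bar v_t)^-]$ and the truncated inverse moments decay with the correct power $\Delta t^{\nu/2}$ rather than a cruder rate. This is also the reason the statement is restricted to the schemes \eqref{cir-choices1}: for these $f_1={\tt id}$ and $\bar v$ is an un-truncated Euler recursion, so its negative part and its increments can be estimated directly from the Gaussian increments of $W$ together with the CIR small-ball bound; for the schemes \eqref{cir-choices2}, where ${\tt abs}$ or ${\tt sym}$ is applied after every step, $\bar v$ is no longer a simple It\=o process and a separate, more delicate moment analysis in the regime $\nu\le1$ would be required. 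I expect the log-price reduction and the drift/Gronwall steps to be routine; the entire weight of the argument sits in matching the small-ball exponent $\nu$ to the attainable order in $\Delta t$.
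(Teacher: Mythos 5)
Your reduction of the log-price error to the volatility error contains the critical gap. You propose to control the martingale part $\int_0^{t}(\sqrt{V_s}-\sqrt{\hat v_{\eta(s)}})\,dU_s$ via Burkholder--Davis--Gundy by $\bigl(\int_0^T\mathbb{E}[(\sqrt{V_s}-\sqrt{\hat v_{\eta(s)}})^2]\,ds\bigr)^{1/2}$ and assert the mean-square estimate $\int_0^T\mathbb{E}[(\sqrt{V_s}-\sqrt{\hat v_{\eta(s)}})^2]\,ds\le C_\epsilon(\Delta t)^{\nu-\epsilon}$. But by the It\=o isometry this integral equals $\mathbb{E}[\tilde M_T^2]$ with $\tilde M_T=\sigma^{-1}\bigl(V_T-\bar v_T+\kappa\int_0^T(V_s-f_2(\bar v_{\eta(s)}))\,ds\bigr)$, so your claim is essentially an $L^2$-error bound of order $(\Delta t)^{\nu/2}$ for the CIR scheme. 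No such bound is available: the elementary inequality $(\sqrt a-\sqrt b)^2\le|a-b|$ combined with the $L^1$ rate only gives $(\Delta t)^{\nu/2-\epsilon}$ for the integral, hence $(\Delta t)^{\nu/4-\epsilon}$ after the square root, and the paper states explicitly (Subsection \ref{sec:further}) that the $L^p$ rate obtainable from the $L^1$ rate by H\"older is only $\min\{1,\nu\}/(2p)$, i.e.\ $\nu/4$ for $p=2$. Your sketched derivation (split at a level $\varepsilon$, use $(\sqrt a-\sqrt b)^2\approx(a-b)^2/(4a)$ on $\{V_s>\varepsilon\}$, small-ball bound on the complement) would in addition require a second-moment bound for $V_s-\bar v_s$ of the corresponding order, which is equally unavailable. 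The paper circumvents this entirely: since $M_t=\int_0^t(\sqrt{V_s}-\sqrt{\hat v_{\eta(s)}})\,dU_s$ and $\tilde M_t=\int_0^t(\sqrt{V_s}-\sqrt{\hat v_{\eta(s)}})\,dW_s$ have identical quadratic variation, Proposition \ref{prop:martingalemoments} gives $\mathbb{E}[\sup_t|M_t|]\le \tfrac{K_{1/2}}{k_{1/2}}\,\mathbb{E}[\sup_t|\tilde M_t|]$, and $\tilde M_T$ is then estimated in $L^1$ through the algebraic identity above, i.e.\ directly by the $L^1$ CIR error; Doob's inequality and an interpolation between the $L^1$ and higher moments of $\tilde M_T$ then yield order $(\nu/2-\varepsilon)/(1+\beta)^2$ for $\mathbb{E}[\sup_t|M_t|]$, which suffices after letting $\beta\to0$. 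Without this (or some substitute) your proof does not reach the rate $\nu/2-\epsilon$ for the log-price.

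For the CIR part your plan is in the right spirit but differs in technique and in where you locate the difficulty. The paper uses the Tanaka--Meyer formula for $|e_t|$ together with the explicit local-time bound of Lemma \ref{lemmadeAngelis} rather than Yamada--Watanabe approximants; the quadratic-variation term is handled by Lemma \ref{squareroot} with $\lambda=1-\nu(1-\zeta)$ and the inverse moments $\mathbb{E}[V_s^{-p}]<\infty$ for $p<\nu$, and you correctly identify this constraint as the place where the Feller index enters. However, no small-ball bound $\mathbb{P}(V_t\le\varepsilon)\le C\varepsilon^\nu$ and no separate estimate of the negativity defect $\mathbb{E}[(\bar v_t)^-]$ are needed: the latter is absorbed for free by $|x-f_3(y)|\le|x-y|$ for $x\ge0$, $y\in\mathbb{R}$. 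These points are reparable; the $L^2$ gap above is the essential one.
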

	
	We conclude this section with a summary of previous results from the literature, further new results and an outline of the remainder of this article.

	\subsection{Previous results for the CIR process} \label{survey-CIR}

	The strong approximation of the CIR process  has been intensively studied in the last years.
	The first works on this topic are \cite{DD,AA1,HM}, which prove strong convergence (without a polynomial rate) of various explicit and implicit schemes using the Yamada-Watanabe approach. 
	
	One of the schemes of \cite{AA1} is the drift-implicit square root Euler scheme which is well defined and positivity preserving for $\nu \geq \frac{1}{2}$. This scheme turned out to be accessible to a more detailed error analysis, see \cite{DNS,AA2,NeSz,HuJeNo}.   In particular, for the $L^1$-approximation at the final time point \cite{AA2} establishes convergence order 1 for $\nu >2$, while  \cite{DNS} gives  convergence order $\frac{1}{2}$ for $\nu >1$, \cite{HuJeNo} yields order $\nu-\frac{1}{2}-\epsilon$ for $\nu >\frac{1}{2}$ and \cite{HeHe} establishes order $\frac{1}{2}$ for the particular case $\nu=\frac{1}{2}$. 
	
	A  breakthrough for the (very challenging) case $\nu \leq 1$ was  provided by \cite{HeHe} and \cite{Mario}. In particular, the truncated Milstein scheme of \cite{Mario} attains $L^1$-convergence order $\min \{\frac{1}{2},\nu \}-\epsilon$ in this regime.

	So, which rates are best possible for the (non-adaptive) $L^1$-approximation of the CIR process  at the final time point?
	This question has been answered by the works \cite{HeHeMG} and \cite{JentzenHefter}, which
	yield
	\begin{displaymath}
		\liminf_{N \rightarrow \infty} \, N^{\min \{\nu,1 \}} \, \inf_{u \in \mathcal{U}}	\, \mathbb{E}\left[  \big| u( W_{t_1},W_{t_2}, \ldots, W_{t_N}) -V_T \big|\right]  >0,
	\end{displaymath}
	where $\mathcal{U}$ is the set of measurable functions $u: \mathbb{R}^N \rightarrow \mathbb{R}$.
	Thus,  the convergence rate of the truncated Milstein scheme for $\nu \leq \frac{1}{2}$ and the rate of the drift-implicit square root Euler for $\nu >2$ are optimal.

	In contrast to this, convergence rate results for the explicit Euler schemes in Table \ref{tab:Euler} have been rare. In \cite{BER}, the authors prove  $L^p$-convergence order $\frac{1}{2}$ for the Symmetrized Euler but with a strong restriction on the Feller index. For  FTE the $L^p$-convergence order $\frac{1}{2}$ for $2\le p <\nu-1$ and $\nu>3$ is shown in \cite{CS2}. As mentioned, \cite{lord-comparison} provides a survey and numerical comparison of Euler-type schemes. Further contributions on the strong approximation of the CIR process can be found in  \cite{GR, CHA, MS16, BoOl}, where \cite{MS16} provides in particular a method for the uniform approximation of the CIR process.
	
Clearly, the truncated Milstein scheme and the drift-implicit square root Euler scheme have similar or even
	better convergence properties than the Euler schemes studied in this article. However, Euler schemes for the approximation of the CIR process are nevertheless popular and our analysis addresses a problem that has been open for a long time, namely to establish (sharp) convergence rates for these schemes under mild assumptions on the Feller index. Moreover, for the log-Heston SDE the obtained convergence rate is optimal for $\nu>1$, see the next subsection.

	\subsection{Previous results for the log-Heston SDE} \label{survey-logHeston}

	We are not aware of any results concerning the strong approximation of the log-Heston model except  \cite{survey,Altmeyer}. In \cite{Altmeyer} the drift-implicit square root Euler for the CIR process is combined with an Euler discretization of the log-Heston process and $L^2$-convergence order $\frac{1}{2}$ is obtained for $\nu>2$, while \cite{survey} uses a drift-implicit Milstein discretization of the CIR process instead and obtains $L^2$-convergence for 
	$\nu>1$. 
	
In view of these results, Theorem \ref{thm:up-bound} and Proposition \ref{prop:up-bound} are a major improvement, in particular with respect to the conditions on the Feller index. Moreover, for $\nu>1$ the obtained convergence rate is best possible. In \cite{opt-tbd} we study the optimal $L^1$-approximation of the log-Heston model by equidistant discretizations of $W$ and $B$ and show in particular that
		\begin{displaymath}
		\liminf_{N \rightarrow \infty} \, N^{\frac{1}{2}} \, \inf_{u \in \mathcal{U}}	\, \mathbb{E}\left[  \big| u( W_{t_1},W_{t_2}, \ldots, W_{t_N}, B_{t_1},B_{t_2}, \ldots, B_{t_N}) -X_T \big|\right]  >0,
	\end{displaymath}
	for $ \nu >1$, where $\mathcal{U}$ is the set of measurable functions from $\mathbb{R}^{2N}$ to $\mathbb{R}$.

	The strong approximation of the full Heston model, i.e.~of $S_T$ instead of $X_T$ carries  an additional burden, since the SDE for the asset price has superlinear coefficients and admits moment explosions, i.e. ${\mathbb E}(S_T^p)=\infty$ for certain parameter constellations and $p>1$, see e.g.~\cite{And-M}. The article \cite{CS}, where
	exponential integrability results for several Euler-type methods for the CIR process have been established, is dedicated to this problem.

	\subsection{Further results}\label{sec:further}
	
	Our analysis is  taylor-made for the $L^1$-approximation  and based on the Tanaka-Meyer formula combined with a clever control of the arising local time of the error process. We found this approach in \cite{DA}, where the approximation of SDEs with irregular drift and additive noise has been studied.
	For the $L^p$-approximation with $p \geq 1$ we could deduce the upper bound $ \min \{1,\nu \}/(2p)$ for the convergence order by 
	a standard application of the H\"older inequality. However, this bound is unlikely to be sharp, compare e.g.  \cite{BER} and \cite{CS2}, so we do not spell out this result in detail.

	\smallskip

	More importantly, our results can in particular be helpful for the Monte-Carlo pricing of (path-dependent) European options, since they allow to control the bias:

	\begin{proposition}\label{prop:up-bound_path_dependent} Let $\nu>1$, $\epsilon >0$ and $(\hat{x}_{t_{k}})_{k \in \{ 0, \ldots, N \} }$  as in Theorem \ref{thm:up-bound}. Moreover, let $G: \mathbb{R}^{[0,T] }\rightarrow [0, \infty)$ be a measurable mapping for which
	 there exists  $L_G>0$ such that
			$$ |G(y)-G(z)| \leq L_G \sup_{t \in [0,T]}|y_t-z_t| $$
			for all measurable $y,z \in  \mathbb{R}^{[0,T] }$.	Finally, set $\hat{x}^{pc,N}_t= \hat{x}_{\eta(t)}$ with $\eta(t)=\max\{k\in \{0,...,N\}:t_k\le t \} \Delta t$.
		Then we have
		\begin{displaymath}
			\begin{aligned}
				&  \lim_{N \rightarrow \infty} \,\,   N^{1/2-\epsilon} \, \big|  \mathbb{E} \left[   G(X) \right]-   \mathbb{E} \left[  G\left(\hat{x}^{pc,N}\right) \right] \big|  =0.
			\end{aligned}
		\end{displaymath}	
	\end{proposition}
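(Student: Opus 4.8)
The plan is to control the bias by the $L^1$-error of the running supremum and to split this supremum error into an oscillation part, governed by the path regularity of the involved processes, and a part governed by Theorem~\ref{thm:up-bound}. Since $G$ is Lipschitz with respect to the supremum norm,
\[
\big|\mathbb{E}[G(X)]-\mathbb{E}[G(\hat x^{pc,N})]\big|\le L_G\,\mathbb{E}\Big[\sup_{t\in[0,T]}\big|X_t-\hat x^{pc,N}_t\big|\Big],
\]
so it suffices to show $\mathbb{E}[\sup_{t\in[0,T]}|X_t-\hat x^{pc,N}_t|]=o(N^{-(1/2-\epsilon)})$. Abbreviating $\tilde W_t:=\rho W_t+\sqrt{1-\rho^2}\,B_t$ and letting $\hat X$ denote the time-continuous Euler interpolation of the log-price (so that $\hat X_{t_k}=\hat x_{t_k}$ and $\hat x^{pc,N}_t=\hat X_{\eta(t)}$), I bound
\[
\sup_{t\in[0,T]}\big|X_t-\hat x^{pc,N}_t\big|\le \sup_{t\in[0,T]}\big|X_t-\hat X_t\big|+\sup_{t\in[0,T]}\big|\hat X_t-\hat X_{\eta(t)}\big|.
\]

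The second term is an oscillation term. On each cell one has $\hat X_t-\hat X_{t_m}=(\mu-\tfrac12\hat v_{t_m})(t-t_m)+\sqrt{\hat v_{t_m}}(\tilde W_t-\tilde W_{t_m})$, and since $\hat v_{t_m}$ is $\mathcal F_{t_m}$-measurable and the Euler schemes have moments bounded uniformly in $k$ and $N$, the Burkholder--Davis--Gundy inequality together with an $\ell^p$-bound over the $N$ cells gives $\mathbb{E}[\sup_{t}|\hat X_t-\hat X_{\eta(t)}|]=O(N^{-1/2+1/p})$ for every $p\ge 2$. Choosing $p>1/\epsilon$ renders this $o(N^{-(1/2-\epsilon)})$.

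For the first term I use $X_t-\hat X_t=-\tfrac12\int_0^t(V_s-\hat v_{\eta(s)})\,ds+\int_0^t(\sqrt{V_s}-\sqrt{\hat v_{\eta(s)}})\,d\tilde W_s$. The finite-variation part is dominated by $\tfrac12\int_0^T|V_s-\hat v_{\eta(s)}|\,ds$, whose expectation is at most $\tfrac12\int_0^T\mathbb{E}|V_s-V_{\eta(s)}|\,ds+\tfrac{T}{2}\max_k\mathbb{E}|V_{t_k}-\hat v_{t_k}|$; the first summand is $O(N^{-1/2})$ by the $L^1$-modulus of continuity of the CIR process and the second is $o(N^{-(1/2-\epsilon)})$ by Theorem~\ref{thm:up-bound}, so both are of the required order. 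For the martingale part the Burkholder--Davis--Gundy inequality yields
\[
\mathbb{E}\Big[\sup_{t\le T}\Big|\int_0^t(\sqrt{V_s}-\sqrt{\hat v_{\eta(s)}})\,d\tilde W_s\Big|\Big]\le C\,\mathbb{E}\Big[\Big(\int_0^T(\sqrt{V_s}-\sqrt{\hat v_{\eta(s)}})^2\,ds\Big)^{1/2}\Big].
\]

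Bounding this last expression by $o(N^{-(1/2-\epsilon)})$ is the main obstacle. Splitting $\sqrt{V_s}-\sqrt{\hat v_{\eta(s)}}=(\sqrt{V_s}-\sqrt{V_{\eta(s)}})+(\sqrt{V_{\eta(s)}}-\sqrt{\hat v_{\eta(s)}})$, the oscillation part is benign: since $\sqrt{V}$ solves an SDE with additive noise and a drift with finite inverse moments for $\nu>1$, one has $\mathbb{E}(\sqrt{V_s}-\sqrt{V_{\eta(s)}})^2=O(\Delta t)$, contributing a quadratic variation of order $N^{-1}$ and, after Jensen, a term of order $N^{-1/2}$. The difficulty lies entirely in the carried-over CIR error $\Delta t\sum_{k}(\sqrt{V_{t_k}}-\sqrt{\hat v_{t_k}})^2$: here the naive estimate $(\sqrt{a}-\sqrt{b})^2\le|a-b|$ combined with Theorem~\ref{thm:up-bound} and Jensen's inequality only gives $(T\max_k\mathbb{E}|V_{t_k}-\hat v_{t_k}|)^{1/2}=o(N^{-(1/4-\epsilon/2)})$, i.e.\ merely half of the required rate, because pulling the square root out of the expectation wastes a factor $N^{1/4}$. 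To close the gap one must establish the sharp second-moment bound $\mathbb{E}(\sqrt{V_{t_k}}-\sqrt{\hat v_{t_k}})^2=O(N^{-1+\epsilon})$, which makes the quadratic variation of order $N^{-1+\epsilon}$ and its square root of the desired order $N^{-1/2+\epsilon}$. Proving this square-root $L^2$-estimate is the technical heart: it is strictly stronger than what Theorem~\ref{thm:up-bound} and Hölder's inequality provide, and I expect to obtain it by writing $(\sqrt{V_{t_k}}-\sqrt{\hat v_{t_k}})^2\le (V_{t_k}-\hat v_{t_k})^2/V_{t_k}$ and combining the finiteness of the inverse moments $\mathbb{E}[V_{t_k}^{-q}]$ for $q<\nu$ (available since $\nu>1$) with sufficiently high-moment control of the CIR error $V_{t_k}-\hat v_{t_k}$.
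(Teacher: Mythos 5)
Your reduction via the Lipschitz property of $G$ and the splitting into an oscillation term plus the sup-norm error of the time-continuous Euler interpolation is sound, and your treatment of the drift part and of both oscillation terms is fine. The problem is exactly where you locate it: the martingale term $\mathbb{E}\bigl[\bigl(\int_0^T(\sqrt{V_s}-\sqrt{\hat v_{\eta(s)}})^2\,ds\bigr)^{1/2}\bigr]$. Your proposed fix --- proving $\mathbb{E}(\sqrt{V_{t_k}}-\sqrt{\hat v_{t_k}})^2 = O(N^{-1+\epsilon})$ from $(\sqrt a-\sqrt b)^2\le (a-b)^2/a$, inverse moments of $V$, and ``sufficiently high-moment control of the CIR error'' --- does not close the gap. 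After H\"older, that route requires an $L^{2q}$-convergence rate of essentially $\frac12-\epsilon$ for the CIR error with some $q>1$, i.e. $\bigl(\mathbb{E}|V_{t_k}-\hat v_{t_k}|^{2q}\bigr)^{1/q}=O(N^{-1+\epsilon})$. No such $L^p$ rate with $p>1$ is available here: the paper only proves the $L^1$ rate $\frac12-\epsilon$ for $\nu>1$, notes in Subsection \ref{sec:further} that H\"older only yields $L^p$ order $\min\{1,\nu\}/(2p)$, and the known sharp $L^p$ results for these explicit Euler schemes (\cite{BER}, \cite{CS2}) require much stronger conditions on the Feller index (e.g. $\nu>3$). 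So the ``technical heart'' you defer is not a technicality; as stated, your plan for it needs an input that is open for general $\nu>1$.

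The paper avoids this entirely with a different device (Theorem \ref{thm:three}): write $M_t=\int_0^t(\sqrt{V_s}-\sqrt{\hat v_{\eta(s)}})\,dU_s$ and $\tilde M_t=\int_0^t(\sqrt{V_s}-\sqrt{\hat v_{\eta(s)}})\,dW_s$. These have identical quadratic variation, so by the Burkholder--Davis--Gundy inequality $\mathbb{E}[\sup_t|M_t|]\le \frac{K_{1/2}}{k_{1/2}}\,\mathbb{E}[\sup_t|\tilde M_t|]$, and $\tilde M$ --- being driven by the same Brownian motion as the CIR scheme --- telescopes against the dynamics:
\begin{displaymath}
\sigma\tilde M_T=V_T-\bar v_T+\kappa\int_0^T\bigl(V_s-f_2(\bar v_{\eta(s)})\bigr)\,ds,
\end{displaymath}
so $\mathbb{E}|\tilde M_T|\le C_\varepsilon(\Delta t)^{1/2-\varepsilon}$ follows from the $L^1$ CIR bound alone; Doob's inequality plus moment interpolation ($\mathbb{E}|\tilde M_T|^{1+\beta}\le C_\beta(\mathbb{E}|\tilde M_T|)^{1/(1+\beta)}$, using the boundedness of all higher moments) then upgrades this to $\mathbb{E}[\sup_t|M_t|]\le C(\Delta t)^{(1/2-\varepsilon)/(1+\beta)^2}$, which suffices after choosing $\beta$ and $\varepsilon$ small. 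No $L^2$ estimate on $\sqrt{V}-\sqrt{\hat v}$ is ever needed. Replace your final step with this argument (or simply invoke Theorems \ref{thm:three} and \ref{thm:four}, which is what the paper's proof of the proposition does, combined with a Garsia--Rodemich--Rumsey bound for $\sup_t|X_t-X_{\eta(t)}|$); the rest of your proof then goes through.
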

	
	\smallskip
	
	Examples for $G$ include lookback-put options as 
	$$ G(X)= \left[K-\sup_{t \in [0,T]}  \exp(X_t) \right]^+$$
	or  arithmetic-asian-put options
	$$  G(X)=  \left[K-\frac{1}{T}  \int_0^T  \exp(X_t)  dt \right]^{+}$$ with $K>0$.  Note that for  $X=W$ and $G(x)=\sup_{t \in [0,T]} x_t$, we have the well known result
	$$   \lim_{N \rightarrow \infty} {N}^{1/2} \left( \mathbb{E}\left[\max_{t \in [0,T]} W_t \right] -   \mathbb{E}\left[\max_{k=0, \ldots, N} W_{t_k}\right] \right)=  \sqrt{\frac{T}{2 \pi}} \, \left| \zeta  ( \tfrac{1}{2} ) \right| ,$$
	see e.g. pages 884 -- 886 in \cite{pitman},  where $\zeta(\cdot)$ denotes the Riemann zeta function. So,  in Proposition \ref{prop:up-bound_path_dependent} we can not expect to obtain a better decay of the bias.

	\subsection{Notation and Outline}
	As already mentioned, we will work with an equidistant discretization
	\begin{displaymath}
		t_k = k \Delta t, \quad k=0, \ldots, N,
	\end{displaymath}
	with $\Delta t= T/ N$ and $N \in \mathbb{N}$. Furthermore, we define $n(t):=\max\{k\in \{0,...,N\}:t_k\le t \}$ and $\eta(t):=t_{n(t)}$. Constants whose values depend only on $T, x_0, v_0, \kappa, \theta, \sigma, \mu, \rho$ and the choice of $f_1,f_2,f_3$ will be denoted in the following by $C$, regardless of their value. Other dependencies will be denoted by subscripts, that is $C_{h,p}$ means e.g.  that this constant depends additionally on  a function $h$ and a parameter $p$. Moreover, the value of all these constants can change from line to line. Finally, we will work on  a  filtered probability space $(\Omega, \cF, (\cF_t)_{t\in[0,T]}, P)$, where the filtration satisfies the usual conditions, and (in-)equalities between random variables or random processes are understood $P$-a.s.~unless mentioned otherwise.
	
	The remainder of this article is organized as follows. We first show and collect some preliminary results in Section \ref{sec:prelim}. 
	The proofs of Theorem \ref{thm:up-bound} and Proposition \ref{prop:up-bound} are  carried out in Section \ref{sec:main} and Section \ref{sec:heston}, while the proof of  Proposition \ref{prop:up-bound_path_dependent} is also given in  Section \ref{sec:heston}. Finally, our  simulation study is presented in Section \ref{sec:simulation}.

	\section{Time-continuous extensions of the schemes and other preliminary results}
	\label{sec:prelim}
	In this section, we will present  the discretization schemes in detail that we are analyzing and a couple of preliminary results that are needed  to prove our main theorems. The first one is a well-known result for the CIR process (see e.g. Section 3 in \cite{DNS} and Theorem 3.1 in \cite{HK}). 
	\begin{lemma}\label{bounded}
		Let $p>-\nu$. Then we have
		\begin{displaymath}
			\sup\limits_{t\in[0,T]}\mathbb{E}\left[V^{p}_t\right]<\infty.
		\end{displaymath}
	\end{lemma}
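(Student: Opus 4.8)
The plan is to treat the ranges $p\geq 0$ and $-\nu<p<0$ separately, since the nonnegative moments are entirely routine while the negative moments carry the whole difficulty. For $0\leq p\leq 1$ I would simply use $V_t^p\leq 1+V_t$ together with the explicit first moment $\mathbb{E}[V_t]=v_0 e^{-\kappa t}+\theta(1-e^{-\kappa t})$, which is bounded on $[0,T]$. For $p>1$, applying It\^o's formula to $V_t^p$, localising to remove the local martingale, and taking expectations yields
\begin{displaymath}
\mathbb{E}[V_{t}^{p}] = v_0^p + \int_0^t \left( p\kappa\theta\, \mathbb{E}[V_s^{p-1}] - p\kappa\, \mathbb{E}[V_s^{p}] + \tfrac{1}{2}p(p-1)\sigma^2\, \mathbb{E}[V_s^{p-1}] \right) ds .
\end{displaymath}
Bounding the lower moment by $\mathbb{E}[V_s^{p-1}]\leq 1+\mathbb{E}[V_s^p]$ and applying Gronwall gives a bound uniform in $t\in[0,T]$, after which Fatou removes the localisation.

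For the negative moments write $q:=-p\in(0,\nu)$. The natural attempt is again It\^o applied to $V_t^{-q}$, whose generator is
\begin{displaymath}
\mathcal{L}(x^{-q}) = \frac{q\sigma^2}{2}(q+1-\nu)\, x^{-q-1} + q\kappa\, x^{-q}.
\end{displaymath}
When $q\leq\nu-1$ the coefficient $q+1-\nu$ is nonpositive, the singular term has a favourable sign, and a Gronwall argument as above closes after localising at $\inf\{t:V_t\leq 1/n\}$. The obstacle is that for $\nu-1<q<\nu$ --- in particular for \emph{every} admissible $q$ when $\nu\leq 1$ --- this coefficient is strictly positive, and the differential inequality then involves the strictly more singular moment $\mathbb{E}[V_s^{-q-1}]$, which is typically infinite since $q+1>\nu$. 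Hence the Lyapunov/Gronwall route cannot reach the full range, and closing this gap is the main difficulty.

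To overcome it I would invoke the explicit law of $V_t$: conditionally on $V_0=v_0$ the variable $V_t$ is a scaled noncentral chi-squared, equivalently its Laplace transform is known in closed form,
\begin{displaymath}
\mathbb{E}\!\left[e^{-\lambda V_t}\right] = (1+\lambda\gamma_t)^{-\nu}\exp\!\left(-\frac{v_0\lambda e^{-\kappa t}}{1+\lambda\gamma_t}\right), \qquad \gamma_t := \frac{\sigma^2(1-e^{-\kappa t})}{2\kappa}.
\end{displaymath}
Using the identity $x^{-q}=\Gamma(q)^{-1}\int_0^\infty \lambda^{q-1}e^{-\lambda x}\,d\lambda$ and Tonelli's theorem, I would write $\mathbb{E}[V_t^{-q}]=\Gamma(q)^{-1}\int_0^\infty \lambda^{q-1}\mathbb{E}[e^{-\lambda V_t}]\,d\lambda$, bound the exponential factor by $1$, and evaluate the resulting Beta integral to obtain
\begin{displaymath}
\mathbb{E}[V_t^{-q}] \leq \frac{\Gamma(\nu-q)}{\Gamma(\nu)}\,\gamma_t^{-q},
\end{displaymath}
which is finite precisely because $0<q<\nu$; this is uniform for $t$ bounded away from $0$.

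For the remaining regime $t\to 0^+$, where $\gamma_t^{-q}$ blows up, I would retain the exponential factor (equivalently, use the Poisson-mixture representation of the noncentral chi-squared): the noncentrality parameter, of order $v_0 e^{-\kappa t}/\gamma_t$, diverges as $t\to 0$ and exactly compensates the $\gamma_t^{-q}$ singularity, so that $\mathbb{E}[V_t^{-q}]\to v_0^{-q}$. Thus $t\mapsto\mathbb{E}[V_t^{-q}]$ extends continuously to the compact interval $[0,T]$ and is therefore bounded, which completes the estimate for $\sup_{t\in[0,T]}\mathbb{E}[V_t^{p}]$. I expect the only genuinely delicate point to be this negative-moment regime $\nu-1<q<\nu$, where elementary arguments fail and one is forced to rely on the explicit transition law, together with the mild technical care needed to make the bound uniform as $t\to 0$.
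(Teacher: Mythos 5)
Your proof is correct: the positive moments via It\^o/Gronwall are routine, and for the negative moments your use of the explicit Laplace transform of the noncentral chi-squared law, the identity $x^{-q}=\Gamma(q)^{-1}\int_0^\infty\lambda^{q-1}e^{-\lambda x}\,d\lambda$, and the resulting Beta integral (with the extra care near $t=0$) is exactly the standard argument. The paper itself does not prove this lemma but cites \cite{DNS} and \cite{HK}, and your route coincides with the one used there, so there is nothing to add beyond spelling out the dominated-convergence details as $t\to 0^+$, which you correctly flag as the only delicate point.
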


	The next auxiliary result on the smoothness of the CIR process and the log-Heston-SDE is also well known:

	\begin{lemma}\label{bounded-2}
		Let $p \geq 1$. Then we have
		\begin{displaymath}
			\sup_{0 \leq s< t \leq  T } \mathbb{E} \left[  \frac{|V_t-V_s|^p}{|t-s|^{p/2}} \right] +  		\sup_{0 \leq s< t \leq  T } \mathbb{E} \left[ \frac{|X_t-X_s|^p}{|t-s|^{p/2}} \right]<\infty.
		\end{displaymath}
	\end{lemma}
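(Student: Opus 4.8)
The goal is to bound the $p$-th moments of the increments of $V$ and $X$ by $C|t-s|^{p/2}$, uniformly in $s,t$. The plan is to treat the two processes separately, handling the CIR process $V$ first and then using it to control the log-price $X$. For the CIR increment, I would write the integral form of the SDE,
\begin{displaymath}
	V_t - V_s = \int_s^t \kappa(\theta - V_r)\,dr + \sigma \int_s^t \sqrt{V_r}\,dW_r,
\end{displaymath}
and estimate the two terms by standard tools. For the drift term, the triangle inequality together with the $L^p$-boundedness of $V$ from Lemma 1.4 (with $p \geq 1 > -\nu$) gives a bound of order $|t-s|$, which is dominated by $|t-s|^{p/2}$ for $p\ge 1$ on the bounded interval $[0,T]$ after a power-mean (Hölder-in-time) step. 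For the stochastic term, the Burkholder--Davis--Gundy inequality bounds its $p$-th moment by a constant times $\mathbb{E}[(\int_s^t V_r\,dr)^{p/2}]$, and applying Hölder in time inside the bracket plus Lemma 1.4 again yields the desired $|t-s|^{p/2}$ scaling.

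For the log-price increment, I would similarly use
\begin{displaymath}
	X_t - X_s = \int_s^t \Big(\mu - \tfrac12 V_r\Big)\,dr + \int_s^t \sqrt{V_r}\,\big(\rho\,dW_r + \sqrt{1-\rho^2}\,dB_r\big),
\end{displaymath}
and repeat the argument: the drift is controlled by $|t-s|$ times a constant depending on $\sup_r \mathbb{E}[V_r]$, and the two martingale terms are handled by BDG exactly as above, since their quadratic variations are again controlled by $\int_s^t V_r\,dr$. Collecting these estimates and dividing by $|t-s|^{p/2}$ gives the claimed uniform bound.

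The only genuinely delicate point is making sure every application of Hölder's inequality in the time variable produces the correct power of $|t-s|$ and that all arising moments of $V$ fall under the scope of Lemma 1.4; since we only ever need nonnegative powers of $V$ here, the condition $p>-\nu$ is trivially satisfied and no restriction on the Feller index enters. The rest is routine, so I would not expect a substantive obstacle beyond careful bookkeeping of constants and exponents.
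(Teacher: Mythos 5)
Your plan is correct and follows essentially the same route as the paper: write the increments in integral form, bound the drift part via Hölder/Minkowski in time together with the moment bounds of Lemma \ref{bounded}, bound the martingale part via Burkholder--Davis--Gundy plus Hölder and Lemma \ref{bounded}, and treat $X$ identically since its equation does not involve $X$ itself. The only bookkeeping detail the paper makes explicit that you gloss over is the initial reduction to $p\ge 2$ (with $p\in[1,2)$ then following from the Lyapunov inequality), which keeps all the Hölder-in-time exponents on the right side.
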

	\begin{proof} Let $p\geq 2$. The case $p \in [1,2)$ then follows by an application of the Lyapunov inequality.
		
	 By Jensen's inequality, it holds that 	$\left|x+y\right|^p\le 2^{p-1}\left( |x|^p+|y|^p\right)$ and $\left|x+y+z\right|^p\le 3^{p-1}\left(|x|^p+|y|^p+|z|^p\right)$. Together with H\"older's inequality we have that
		\begin{align*}
			\sup_{0 \leq s< t \leq  T } \mathbb{E} \left[ 	 \frac{|V_t-V_s|^p}{|t-s|^{p/2}} \right] &= 	\sup_{0 \leq s< t \leq  T } \frac{1}{|t-s|^{p/2}}\mathbb{E} \left[  \left|\int_{s}^{t}\kappa(\theta-V_u)du+\sigma\int_{s}^{t}\sqrt{V_u}dW_u\right|^p \right]\\
			&\le C_p\left( \sup_{0 \leq s< t \leq  T } \frac{|t-s|^{p-1}}{|t-s|^{p/2}}\mathbb{E}\left[\int_{s}^{t}\left|\kappa(\theta-V_u)\right|^pdu\right]\right.\\
			&\qquad\qquad\left.+ \sup_{0 \leq s< t \leq  T } \frac{1}{|t-s|^{p/2}}\mathbb{E}\left[\left|\sigma\int_{s}^{t}\sqrt{V_u}dW_u\right|^p\right]\right).
		\end{align*}
	The first term is bounded by Lemma \ref{bounded}. For the second term, an application of the Burkholder-Davis-Gundy inequality, H\"older's inequality and Lemma \ref{bounded} gives the desired result. The log-price process can be treated analogously, since its integral equation does not depend on $X$.
	\end{proof}

	The following lemma gives us a bound for the  expected local time in zero of a semimartingale. It can be found in Lemma 5.1 in \cite{DA}.
	\begin{lemma}\label{lemmadeAngelis}
		For any $\delta\in(0,1)$ and any real-valued, continuous semimartingale $Y=(Y_t)_{t \in [0,T]}$, we have
		\begin{displaymath}
			\begin{aligned}
				\mathbb{E}\left[L^0_t(Y)\right]\le 4\delta&-2\mathbb{E}\left[\int_{0}^{t}\left(\mathbbm{1}_{\{Y_s\in(0,\delta)\}}+\mathbbm{1}_{\{Y_s \geq \delta\}}e^{1-\frac{Y_s}{\delta}}\right)dY_s\right]\\
				&+\frac{1}{\delta}\mathbb{E}\left[\int_{0}^{t}\mathbbm{1}_{\{Y_s>\delta\}}e^{1-\frac{Y_s}{\delta}}d\langle Y\rangle_s\right], \qquad t \in [0,T],
			\end{aligned}
		\end{displaymath}
	where $L^0(Y)=(L_t^0(Y))_{t\in [0,T]}$ denotes the local time  of $Y$ in zero.
	\end{lemma}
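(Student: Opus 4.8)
The plan is to apply the Itô--Tanaka--Meyer formula to a cleverly chosen function $f_\delta$ whose second derivative, viewed as a signed measure, has a negative atom of mass $2$ at the origin (this is what reproduces the local time $L^0_t(Y)$) together with an absolutely continuous part supported on $(\delta,\infty)$ that produces precisely the weighted $d\langle Y\rangle$--term on the right-hand side. Reverse-engineering from the target, I would take
\begin{displaymath}
	f_\delta(x) = \begin{cases} 0, & x \le 0, \\ -2x, & 0 < x < \delta, \\ 2\delta\, e^{1 - x/\delta} - 4\delta, & x \ge \delta. \end{cases}
\end{displaymath}
This $f_\delta$ is continuous on $\mathbb{R}$ and differentiable off the origin, with
\begin{displaymath}
	f_\delta'(x) = -2\,\mathbbm{1}_{\{0 < x < \delta\}} - 2\,e^{1-x/\delta}\,\mathbbm{1}_{\{x \ge \delta\}},
\end{displaymath}
which is continuous across $x=\delta$ (both one-sided values equal $-2$, so no spurious atom appears there) and jumps from $0$ to $-2$ at $x=0$. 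Hence $f_\delta$ is a difference of convex functions with second-derivative measure $f_\delta''(dx) = -2\,\delta_0(dx) + \tfrac{2}{\delta}\,e^{1-x/\delta}\,\mathbbm{1}_{\{x>\delta\}}\,dx$, where $\delta_0$ is the Dirac mass at $0$. Inspecting the three pieces shows $-4\delta \le f_\delta(x) \le 0$ for every $x$, and it is exactly this uniform bound that will furnish the constant $4\delta$.

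The next step is to invoke the generalized Itô (Itô--Tanaka--Meyer) formula for $f_\delta$ along the continuous semimartingale $Y$, namely
\begin{displaymath}
	f_\delta(Y_t) = f_\delta(Y_0) + \int_0^t f_\delta'(Y_s)\,dY_s + \frac{1}{2}\int_{\mathbb{R}} L^a_t(Y)\, f_\delta''(da).
\end{displaymath}
The atomic part contributes $\tfrac{1}{2}(-2)L^0_t(Y) = -L^0_t(Y)$, while the occupation-times formula converts the absolutely continuous part into
\begin{displaymath}
	\frac{1}{\delta}\int_\delta^\infty L^a_t(Y)\,e^{1-a/\delta}\,da = \frac{1}{\delta}\int_0^t \mathbbm{1}_{\{Y_s > \delta\}}\,e^{1-Y_s/\delta}\,d\langle Y\rangle_s.
\end{displaymath}
Solving for $L^0_t(Y)$ and substituting $f_\delta'$ gives the pathwise identity
\begin{displaymath}
	\begin{aligned}
		L^0_t(Y) = f_\delta(Y_0) - f_\delta(Y_t) &- 2\int_0^t \Big(\mathbbm{1}_{\{Y_s\in(0,\delta)\}} + \mathbbm{1}_{\{Y_s\ge\delta\}}e^{1-Y_s/\delta}\Big)\,dY_s \\ &+ \frac{1}{\delta}\int_0^t \mathbbm{1}_{\{Y_s>\delta\}}e^{1-Y_s/\delta}\,d\langle Y\rangle_s.
	\end{aligned}
\end{displaymath}
Since $f_\delta(Y_0)\le 0$ and $f_\delta(Y_t)\ge -4\delta$ pathwise, the difference $f_\delta(Y_0)-f_\delta(Y_t)$ is at most $4\delta$; bounding it pointwise and then taking expectations yields exactly the asserted inequality.

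The only genuinely creative step is the discovery of $f_\delta$: one demands that $f_\delta'$ equal $-2$ times the bracket multiplying $dY_s$, that the jump of $f_\delta'$ at the origin carry the correct mass to reconstitute $L^0_t(Y)$, and that $f_\delta$ be continuous at $\delta$ so that no extra local-time term is generated; the additive constant $-4\delta$ is then forced by continuity and simultaneously delivers the uniform two-sided bound. The remaining points are routine: (i) the Itô--Tanaka--Meyer formula applies because $f_\delta$ is a difference of convex functions, so its use for $f_\delta\notin C^2$ is standard (alternatively one may smooth $f_\delta$ by convolution and pass to the limit); and (ii) no integrability obstacle arises, since $f_\delta$ and $f_\delta'$ are bounded, which keeps $\mathbb{E}[f_\delta(Y_0)]$ and $\mathbb{E}[f_\delta(Y_t)]$ finite and the integrands against $dY_s$ and $d\langle Y\rangle_s$ bounded. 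I expect the sign bookkeeping in the atomic term (getting $-L^0_t(Y)$ rather than $+L^0_t(Y)$) and the verification of continuity at $\delta$ to be the places most prone to slips, but neither is conceptually hard.
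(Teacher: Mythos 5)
The paper does not prove this lemma at all --- it simply quotes it as Lemma 5.1 of the cited reference \cite{DA} --- so there is no in-paper argument to compare against. Your reconstruction is correct: $f_\delta$ is continuous at $0$ and at $\delta$, satisfies $-4\delta< f_\delta\le 0$, its left derivative agrees with $-2\bigl(\mathbbm{1}_{\{0<x<\delta\}}+\mathbbm{1}_{\{x\ge\delta\}}e^{1-x/\delta}\bigr)$, and its second-derivative measure is $-2\delta_0(dx)+\tfrac{2}{\delta}e^{1-x/\delta}\mathbbm{1}_{\{x>\delta\}}dx$, so the generalized It\^o--Tanaka formula (in the same local-time normalization the paper uses, where $|Y_t|=|Y_0|+\int_0^t\sign(Y_s)\,dY_s+L_t^0(Y)$ and $\int_{\mathbb{R}}g(a)L_t^a\,da=\int_0^tg(Y_s)\,d\langle Y\rangle_s$) yields exactly the claimed pathwise identity and hence the bound after taking expectations.
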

	
	The following statement can be verified by a simple computation.
	\begin{lemma}\label{squareroot}
		For $\lambda\in[0,1]$ and $x,y\ge0$, we have
		\begin{displaymath}
			\begin{aligned}
				\left|\sqrt{x}-\sqrt{y}\right|\le 
				x^{-\frac{1}{2}(1-\lambda)}\left|x-y\right|^{1-\frac{\lambda}{2}}.
			\end{aligned}
		\end{displaymath}
	\end{lemma}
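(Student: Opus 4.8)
The plan is to prove the estimate by interpolating between the two endpoint cases $\lambda=1$ and $\lambda=0$, which correspond respectively to the $\tfrac12$-H\"older and the Lipschitz behaviour of the square root. Throughout I may assume $x>0$: if $x=0$ and $\lambda<1$, the prefactor $x^{-\frac12(1-\lambda)}$ equals $+\infty$ and the claim is trivial, whereas for $\lambda=1$ the prefactor equals $1$ and the asserted inequality reduces to $\sqrt{y}\le\sqrt{y}$.

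First I would record the two elementary endpoint bounds
\[
	\left|\sqrt{x}-\sqrt{y}\right|\le\sqrt{|x-y|},
	\qquad
	\left|\sqrt{x}-\sqrt{y}\right|\le x^{-1/2}\,|x-y|.
\]
Both follow at once from the identity $\left|\sqrt{x}-\sqrt{y}\right|=|x-y|/(\sqrt{x}+\sqrt{y})$: the first by squaring and using (without loss of generality) $x\ge y$, or directly from the subadditivity of $t\mapsto\sqrt{t}$; the second from the trivial lower bound $\sqrt{x}+\sqrt{y}\ge\sqrt{x}$.

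The key step is then to split the exponent as $1=\lambda+(1-\lambda)$ and to write
\[
	\left|\sqrt{x}-\sqrt{y}\right|
	=\left|\sqrt{x}-\sqrt{y}\right|^{\lambda}\,\left|\sqrt{x}-\sqrt{y}\right|^{1-\lambda}.
\]
Applying the first endpoint bound to the factor raised to the power $\lambda$ and the second to the factor raised to the power $1-\lambda$ yields
\[
	\left|\sqrt{x}-\sqrt{y}\right|
	\le |x-y|^{\lambda/2}\cdot x^{-\frac12(1-\lambda)}\,|x-y|^{1-\lambda}
	= x^{-\frac12(1-\lambda)}\,|x-y|^{1-\frac{\lambda}{2}},
\]
where in the last equality I used $\tfrac{\lambda}{2}+(1-\lambda)=1-\tfrac{\lambda}{2}$. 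This is exactly the claimed estimate.

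There is no serious obstacle here; the only points requiring a little care are the verification of the two endpoint bounds and the degenerate boundary case $x=0$, both of which are routine.
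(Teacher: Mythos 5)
Your proof is correct: the two endpoint bounds $|\sqrt{x}-\sqrt{y}|\le\sqrt{|x-y|}$ and $|\sqrt{x}-\sqrt{y}|\le x^{-1/2}|x-y|$ both follow from the identity $|\sqrt{x}-\sqrt{y}|=|x-y|/(\sqrt{x}+\sqrt{y})$, and the geometric interpolation with weights $\lambda$ and $1-\lambda$ gives exactly the claimed exponents, with the boundary case $x=0$ handled sensibly. The paper omits the proof entirely (it only remarks that the lemma "can be verified by a simple computation"), and your argument is precisely such a computation, so there is nothing to add.
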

	
We also require the well known Burkholder-Davis-Gundy inequalities, see e.g.
	Theorem 3.28 in Chapter III of \cite{KS}.
	\begin{proposition}\label{prop:martingalemoments} 
		Let $M=(M_t)_{t \in [0,T]}$ be a  continuous martingale and $\alpha > 0$. Then, there exist  constants $k_{\alpha},K_{\alpha}>0$ 	 such that 

		\begin{displaymath}
			k_{\alpha} \, \mathbb{E}\left[\langle M\rangle_t^{\alpha}\right] \leq 
			\mathbb{E}\left[ \sup_{u \in [0,t]} \left|M_u\right|^{2\alpha} \right] \leq K_{\alpha} \, \mathbb{E}\left[\langle M\rangle_t^{{\alpha}} \right], \qquad t \in [0,T].
		\end{displaymath}
		Here $\langle M\rangle_t$, $t \in [0,T]$, denotes the quadratic variation of $M$.
	\end{proposition}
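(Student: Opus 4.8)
The plan is to reduce to a bounded martingale with $M_0=0$ and then split the argument according to the size of the exponent, handling the regime $2\alpha\ge 2$ by elementary It\^o calculus and the remaining cases by a time change (or, alternatively, good-$\lambda$ inequalities). First I would observe that both sides of the asserted inequalities vanish when $M$ is constant, so one may assume $M_0=0$. Localizing along $\tau_n=\inf\{s\ge 0:\ |M_s|+\langle M\rangle_s\ge n\}$ and letting $n\to\infty$ at the end (all relevant quantities converge monotonically, so monotone convergence applies on both sides) reduces everything to a bounded martingale, for which all expectations are finite and the stochastic integrals below are genuine martingales.

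For the upper bound in the range $p:=2\alpha\ge 2$, I would apply It\^o's formula to the $C^2$ function $x\mapsto|x|^p$, obtaining
\begin{displaymath}
|M_t|^p = p\int_0^t |M_s|^{p-1}\operatorname{sign}(M_s)\,dM_s + \frac{p(p-1)}{2}\int_0^t |M_s|^{p-2}\,d\langle M\rangle_s .
\end{displaymath}
Taking expectations removes the martingale term; bounding $|M_s|^{p-2}\le (M_t^*)^{p-2}$ with $M_t^*:=\sup_{u\in[0,t]}|M_u|$ and applying H\"older's inequality with exponents $\frac{p}{p-2},\frac{p}{2}$ yields
\begin{displaymath}
\mathbb{E}\left[|M_t|^p\right]\le \frac{p(p-1)}{2}\,\mathbb{E}\left[(M_t^*)^p\right]^{\frac{p-2}{p}}\,\mathbb{E}\left[\langle M\rangle_t^{\,\alpha}\right]^{\frac{2}{p}} .
\end{displaymath}
I would then feed in Doob's $L^p$-maximal inequality $\mathbb{E}[(M_t^*)^p]\le(\tfrac{p}{p-1})^p\,\mathbb{E}[|M_t|^p]$ to obtain a self-referential bound for $\mathbb{E}[(M_t^*)^p]$; dividing by the finite factor $\mathbb{E}[(M_t^*)^p]^{(p-2)/p}$ and raising to the power $p/2$ isolates $\mathbb{E}[(M_t^*)^{2\alpha}]\le K_\alpha\,\mathbb{E}[\langle M\rangle_t^{\alpha}]$ with an explicit $K_\alpha$.

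The hard part will be the lower bound for every $\alpha>0$ together with the upper bound in the small-exponent range $0<2\alpha<2$, where $x\mapsto|x|^{2\alpha}$ fails to be $C^2$ and Doob's inequality is no longer the right tool. My preferred route is the Dambis--Dubins--Schwarz representation: after enlarging the space when $\langle M\rangle_\infty<\infty$, one writes $M_t=B_{\langle M\rangle_t}$ for a Brownian motion $B$, whence $M_t^*=\sup_{s\le\tau}|B_s|$ with the stopping time $\tau=\langle M\rangle_t$. This turns both inequalities into the single two-sided Brownian comparison
\begin{displaymath}
k_\alpha\,\mathbb{E}[\tau^\alpha]\le\mathbb{E}\Big[\sup_{s\le\tau}|B_s|^{2\alpha}\Big]\le K_\alpha\,\mathbb{E}[\tau^\alpha],
\end{displaymath}
which I would derive from the reflection principle, optional stopping, and Brownian scaling.

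Alternatively, one can stay with $M$ and establish the good-$\lambda$ inequalities of Burkholder and Gundy linking the laws of $M_t^*$ and $\langle M\rangle_t^{1/2}$, then integrate them against $\lambda^{2\alpha-1}\,d\lambda$ to recover both bounds simultaneously for every $\alpha>0$. I expect this good-$\lambda$ estimate (equivalently, the small-exponent Brownian comparison above) to be the genuine obstacle, since it is the one ingredient that cannot be reduced to the convexity-and-Doob machinery that settles the case $2\alpha\ge 2$.
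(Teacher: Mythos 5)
The paper does not actually prove this proposition: it is the classical Burkholder--Davis--Gundy inequality, quoted with a pointer to Theorem 3.28 in Chapter III of Karatzas--Shreve, so there is no in-paper argument to compare yours against. Your outline is the standard textbook proof, and the portion you carry out in detail is sound: the localization, the It\^o-formula/H\"older/Doob bootstrap for the upper bound when $2\alpha\ge 2$, and the division trick are all correct (for $2\alpha=2$ the H\"older step with exponent $\tfrac{p}{p-2}$ degenerates, but there the claim is just the It\^o isometry plus Doob).

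Two caveats. First, your reduction to $M_0=0$ is misjustified: if $M\equiv c\neq 0$, then $\mathbb{E}\left[\sup_{u\in[0,t]}|M_u|^{2\alpha}\right]=|c|^{2\alpha}>0$ while $\langle M\rangle\equiv 0$, so the two sides do \emph{not} both vanish and the upper inequality is simply false unless $M_0=0$. That hypothesis is implicit in the cited theorem and satisfied in every application in the paper (all martingales there are stochastic integrals started at $0$), but you should state it rather than argue it away. Second, and more substantively, the lower bound for all $\alpha>0$ and the upper bound for $2\alpha<2$ are only named, not proved --- and these are exactly the cases the paper uses, since the proposition is invoked with $\alpha=1/2$ in the log-Heston error analysis of Section 4. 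The Dambis--Dubins--Schwarz reduction is legitimate, but the resulting two-sided comparison between $\mathbb{E}[\tau^\alpha]$ and $\mathbb{E}\bigl[\sup_{s\le\tau}|B_s|^{2\alpha}\bigr]$ for arbitrary stopping times is not a consequence of reflection, optional stopping and scaling alone; it requires the good-$\lambda$ (or an equivalent domination) argument that you correctly identify as the genuine obstacle but leave as a black box. As a self-contained proof the proposal is therefore incomplete precisely where the content of the BDG inequality lies; as an account of why the result holds and which tools are needed, it is accurate.
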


Finally, we will also need Doob's maximal inequality, see e.g. Theorem 3.8 in Chapter I of \cite{KS}. 
		\begin{proposition}\label{prop:doob} 
		Let $M=(M_t)_{t \in [0,T]}$ be a  continuous martingale and $p >1$. Then,  it holds that
		\begin{displaymath}
		\mathbb{E}\left[ \sup_{u \in [0,t]} \left|M_u\right|^{p} \right] \leq  \left(\frac{p}{p-1} \right)^p \mathbb{E}|M_t|^p, \qquad t \in [0,T].
	\end{displaymath}
\end{proposition}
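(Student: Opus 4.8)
The statement to prove is Proposition~\ref{prop:doob}, Doob's maximal inequality. Since this is a classical result explicitly attributed to \cite{KS} (Theorem 3.8 in Chapter I), my plan is to reconstruct the standard proof via Doob's maximal ($L^p$) inequality, which itself rests on the weak-type $(p,p)$ bound coming from the optional stopping theorem applied to the submartingale $|M|$.

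\begin{proof}
The plan is to derive the inequality from the fundamental weak-type estimate for submartingales. First I would note that since $M$ is a continuous martingale and $x \mapsto |x|$ is convex, $|M|=(|M_t|)_{t\in[0,T]}$ is a nonnegative continuous submartingale. Writing $M^*_t := \sup_{u\in[0,t]}|M_u|$, the key step is \emph{Doob's maximal inequality}: for every $\lambda>0$,
\begin{displaymath}
	\lambda \, \PP\left(M^*_t \geq \lambda\right) \leq \EE\left[ |M_t| \, \1_{\{M^*_t \geq \lambda\}} \right].
\end{displaymath}
This is obtained by applying the optional sampling theorem at the stopping time $\tau = \inf\{u \in [0,t]: |M_u| \geq \lambda\} \wedge t$; continuity of the paths guarantees $|M_\tau| \geq \lambda$ on $\{M^*_t \geq \lambda\}$, and the submartingale property yields $\EE[|M_\tau|] \leq \EE[|M_t|]$, from which the displayed bound follows after restricting to the event $\{M^*_t \geq \lambda\}$.

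Next I would integrate this weak-type bound against $p\lambda^{p-2}\,d\lambda$ using the layer-cake (Fubini) representation $\EE[(M^*_t)^p] = \int_0^\infty p\lambda^{p-1}\PP(M^*_t \geq \lambda)\,d\lambda$. Substituting the maximal inequality and interchanging the order of integration gives
\begin{displaymath}
	\EE\left[(M^*_t)^p\right] \leq \int_0^\infty p\lambda^{p-2}\, \EE\left[|M_t|\,\1_{\{M^*_t \geq \lambda\}}\right] d\lambda = \frac{p}{p-1}\, \EE\left[|M_t|\,(M^*_t)^{p-1}\right].
\end{displaymath}
Applying H\"older's inequality with exponents $p$ and $p/(p-1)$ to the right-hand side bounds it by $\tfrac{p}{p-1}\,\EE[|M_t|^p]^{1/p}\,\EE[(M^*_t)^p]^{(p-1)/p}$. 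Dividing through by $\EE[(M^*_t)^p]^{(p-1)/p}$ and raising to the $p$-th power yields the claimed constant $(p/(p-1))^p$.

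The main obstacle in making this rigorous is the integrability/finiteness issue in the final division step: one must know a priori that $\EE[(M^*_t)^p]<\infty$ before dividing by it, otherwise the H\"older cancellation is vacuous. The standard remedy, which I would follow, is a truncation argument: first prove the inequality with $M^*_t$ replaced by $M^*_t \wedge n$ (which is bounded, hence $p$-integrable, and still satisfies the weak-type bound with the same right-hand side since $\{M^*_t \wedge n \geq \lambda\}\subseteq\{M^*_t\geq\lambda\}$), obtain the bound with the finite quantity $\EE[(M^*_t\wedge n)^p]$, and then let $n\to\infty$ invoking monotone convergence. This simultaneously establishes finiteness and the inequality in one pass.
\end{proof}
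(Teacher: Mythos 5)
The paper does not prove this proposition at all; it simply cites it as Theorem 3.8 in Chapter I of \cite{KS}, so there is no in-paper argument to diverge from. Your reconstruction is the standard and correct proof of Doob's $L^p$ maximal inequality (weak-type bound via optional sampling on the submartingale $|M|$, layer-cake integration, H\"older, and the truncation $M^*_t \wedge n$ to justify the final division), and it matches the classical argument in the cited reference.
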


	\subsection{Euler schemes -- Case I}\label{prelim-schemes1}
	For the choice \eqref{cir-euler},  \eqref{cir-choices1}, \eqref{heston-euler}  the time-continuous extensions $\bar{v}=(\bar{v}_t)_{t \in [0,T]}$ and  $\hat{v}=(\hat{v}_t)_{t \in [0,T]}$ read as
	\begin{equation}\label{first}
		\begin{aligned}
			\bar{v}_t&=\bar{v}_{\eta(t)}+\int_{\eta(t)}^{t}\kappa(\theta - f_2(\bar{v}_{\eta(s)})) ds + \sigma \int_{\eta(t)}^{t}\sqrt{f_3(\bar{v}_{\eta(s)})}dW_s, \\
			\hat{v}_t &= f_3\left(\bar{v}_t\right),
		\end{aligned} \qquad t \in [0,T],
	\end{equation}
	with $f_2\in \{	{\tt id}, 	{\tt abs}, 	{\tt sym} \} $, $f_3 \in \{ 	{\tt abs}, 	{\tt sym} \}$ and $\bar{v}_0=v_0$.
	Note that $f_2$ and $f_3$ are globally Lipschitz continuous with Lipschitz constant $L=1$ and
	satisfy 
	\begin{equation} |x-f_i(y)| \leq |x-y|, \qquad x \geq 0, y \in \mathbb{R}, \quad i=2,3.\end{equation}
	Moreover note that
	\begin{equation} \label{eq_sqr}
		\sqrt{|f_i(x)|} \leq 1+|x|, \qquad x \in \mathbb{R}, \quad i=1,2,3. 
	\end{equation}
	
	The next lemma can be shown by some tedious but straightforward computations, since the coefficients of the Euler scheme are of linear growth. 

	\begin{lemma}\label{boundedscheme}
		Let $p \geq 1$. There exists  $C_p >0$ such that
		\begin{displaymath}
			\mathbb{E}\left[  \sup_{t\in [0,T]} |\bar{v}_t|^p \right]  + \sup_{0 \leq s< t \leq  T } \mathbb{E} \left[ \frac{ | \bar{v}_t - \bar{v}_s|^p}{|t-s|^{p/2}}  \right]  \leq C_p.
		\end{displaymath}
	\begin{proof}Let $p\geq 2$. The case $p \in [1,2)$ then follows by an application of the Lyapunov inequality.
		
		The boundedness of the second term follows from the boundedness of the first term by applying the same technique as in the proof of Lemma \ref{bounded-2}, where we use that Equation \eqref{eq_sqr} holds for $f_2$ and $f_3$.
		
		 For the first term, we will start with establishing
		\begin{equation}\label{bound_v}
			\sup_{t \in [0,T]}\mathbb{E}\left[|\bar{v}_t|^p\right]<\infty.
		\end{equation}
Let $\tau_n$ be the stopping time defined by $\tau_n:=\inf\{0<t<T;\bar{v}_t\ge n\}$ with $\inf\{\emptyset\}=T$. We have $|f_i(\bar{v}_{\eta(t)})|\le |\bar{v}_{\eta(t)}|$ for $i\in\{2,3\}$. Thus, it follows
	\begin{align*}
		\mathbb{E}\left[|\bar{v}_{t\wedge\tau_n}|^p\right]&\le \mathbb{E}\left[\left|v_0+\int_{0}^{t\wedge\tau_n}\kappa(\theta - f_2(\bar{v}_{\eta(s)})) ds + \sigma \int_{0}^{t\wedge\tau_n}\sqrt{f_3(\bar{v}_{\eta(s)})}dW_s\right|^p\right]\\
		&\le C_p\left(v_0^p+\mathbb{E}\left[\left|\int_{0}^{t\wedge\tau_n}\kappa(\theta - f_2(\bar{v}_{\eta(s)})) ds\right|^p\right]+\mathbb{E}\left[\left|\sigma \int_{0}^{t\wedge\tau_n}\sqrt{f_3(\bar{v}_{\eta(s)})}dW_s\right|^p\right]\right)\\
		&\le C_p\left(1+\mathbb{E}\left[\left|\int_{0}^{t\wedge\tau_n}\kappa(\theta - f_2(\bar{v}_{\eta(s)})) ds\right|^p\right]+\mathbb{E}\left[\left|\sigma^2 \int_{0}^{t\wedge\tau_n}(1+|\bar{v}_{\eta(s)}|)^2ds\right|^{p/2}\right]\right)\\
		&\le C_p\left(1+\mathbb{E}\left[\int_{0}^{t\wedge\tau_n}|\bar{v}_{\eta(s)}|^p ds\right]+\mathbb{E}\left[\left| \int_{0}^{t\wedge\tau_n}|\bar{v}_{\eta(s)}|^2ds\right|^{p/2}\right]\right)\\
		&\le C_p\left(1+\mathbb{E}\left[\int_{0}^{t\wedge\tau_n}|\bar{v}_{\eta(s)}|^p ds\right]\right)
	\end{align*}
	by applications of the H\"older and the Burkholder-Davis-Gundy inequality. Therefore, we have shown that
	\begin{align*}
		\mathbb{E}\left[|\bar{v}_{t\wedge\tau_n}|^p\right]\le C_p\left(1+\int_{0}^{t}\mathbb{E}\left[|\bar{v}_{\eta(s)\wedge \tau_n}|^p\right] ds\right)
	\end{align*}
and we consequently obtain
	\begin{align*}
\sup_{s \in [0,t]}	\mathbb{E}\left[|\bar{v}_{s \wedge\tau_n}|^p\right]\le C_p\left(1+\int_{0}^{t} \sup_{u \in [0,s]}\mathbb{E}\left[|\bar{v}_{u\wedge \tau_n}|^p\right] ds\right).
\end{align*}

The Gronwall inequality now yields
	\begin{align*}
		\sup_{t \in [0,T]}\mathbb{E}\left[|\bar{v}_{t\wedge\tau_n}|^p\right]\le C_p,
	\end{align*}
	where the constant $C_p>0$ does in particular not depend on $n$. Taking the limit $n\rightarrow\infty$, we obtain \eqref{bound_v}. Due to
	\begin{align*}
		\sup_{t\in [0,T]}|\bar{v}_t|^p\le C_p\left(1+\int_{0}^{T}\left|\bar{v}_{\eta(s)}\right|^pds+\sup_{t \in [0,T]}\left|\int_{0}^{t}\sigma\sqrt{f_3(\bar{v}_{\eta(s)})}dW_s\right|^{p}\right),
	\end{align*}
	the assertion now follows from the properties of $f_3$, an application of the Burkholder-Davis-Gundy inequality and Equation \eqref{bound_v}.
	\end{proof}	
	\end{lemma}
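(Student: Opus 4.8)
The plan is to treat $p \geq 2$ first and recover $p \in [1,2)$ at the end via the Lyapunov (Jensen) inequality, exactly as in Lemma~\ref{bounded-2}. I would handle the two summands separately and establish the supremum moment bound first, since the H\"older-type bound on the increments reduces to it.

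First I would prove the weaker pointwise statement $\sup_{t \in [0,T]} \mathbb{E}[|\bar{v}_t|^p] < \infty$. The obstacle here is that a priori $\bar{v}$ need not possess finite moments, so one cannot directly take expectations in the defining integral equation. I would therefore localise with the stopping times $\tau_n := \inf\{0 < t < T : \bar{v}_t \geq n\}$ (with $\inf \emptyset = T$), for which $\mathbb{E}[|\bar{v}_{t \wedge \tau_n}|^p]$ is automatically finite. Evaluating the scheme in its global integral form on $[0, t\wedge\tau_n]$, applying the elementary inequality $|x+y+z|^p \leq 3^{p-1}(|x|^p+|y|^p+|z|^p)$, then H\"older to the drift integral and Burkholder--Davis--Gundy (Proposition~\ref{prop:martingalemoments}) to the stochastic integral, and finally the linear-growth bound $\sqrt{|f_3(\bar{v}_{\eta(s)})|} \leq 1 + |\bar{v}_{\eta(s)}|$ from \eqref{eq_sqr} together with $|f_2(\bar{v}_{\eta(s)})| \leq |\bar{v}_{\eta(s)}|$, I would arrive at
\[
\mathbb{E}[|\bar{v}_{t \wedge \tau_n}|^p] \leq C_p\Big(1 + \int_0^t \mathbb{E}[|\bar{v}_{\eta(s) \wedge \tau_n}|^p]\,ds\Big).
\]
Passing to $\sup_{s \in [0,t]}$ on both sides and invoking Gronwall's inequality then bounds $\sup_{t \in [0,T]} \mathbb{E}[|\bar{v}_{t \wedge \tau_n}|^p]$ by a constant independent of $n$; letting $n \to \infty$ and using Fatou's lemma gives the claim.

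Next, to upgrade this to $\mathbb{E}[\sup_{t \in [0,T]} |\bar{v}_t|^p]$, I would take the supremum over $t\in[0,T]$ inside the global integral equation and again split into the three contributions. The drift term is controlled by H\"older and the pointwise bound just established; the crucial term is the supremum of the stochastic integral, which I would bound via Burkholder--Davis--Gundy by
\[
\mathbb{E}\Big[\sup_{t \in [0,T]}\Big|\int_0^t \sigma \sqrt{f_3(\bar{v}_{\eta(s)})}\,dW_s\Big|^p\Big]
\leq C_p\, \mathbb{E}\Big[\Big(\int_0^T |f_3(\bar{v}_{\eta(s)})|\,ds\Big)^{p/2}\Big],
\]
and then reduce, by H\"older and \eqref{eq_sqr}, to $\int_0^T \mathbb{E}[|\bar{v}_{\eta(s)}|^p]\,ds$, which is finite by the pointwise bound. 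This settles the first summand.

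Finally, for the increment term I would follow verbatim the computation in Lemma~\ref{bounded-2}: writing $\bar{v}_t - \bar{v}_s$ as a drift integral plus an It\^o integral over $[s,t]$, applying Jensen, H\"older and Burkholder--Davis--Gundy, and using the uniform moment bound on $\bar{v}$ together with \eqref{eq_sqr} to see that each contribution scales like $|t-s|^{p/2}$. The only genuine obstacle throughout is the localisation bookkeeping needed to make the Gronwall step rigorous; once $\sup_{t} \mathbb{E}[|\bar{v}_t|^p] < \infty$ is available, the remaining estimates are routine linear-growth arguments.
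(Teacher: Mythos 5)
Your proposal is correct and follows essentially the same route as the paper's proof: localisation with the stopping times $\tau_n$, H\"older and Burkholder--Davis--Gundy together with the linear-growth bounds on $f_2,f_3$, a Gronwall argument giving a constant independent of $n$, passage to the limit to obtain the pointwise moment bound, a second Burkholder--Davis--Gundy application to move the supremum inside the expectation, and a reduction of the increment estimate to the technique of Lemma~\ref{bounded-2}. No substantive differences.
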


	\subsection{Euler schemes -- Case II}\label{prelim-schemes2} 
	For \eqref{cir-euler},  \eqref{cir-choices2}, \eqref{heston-euler} we obtain the
	Symmetrized Euler (SE) and the Euler with Absorption (AE). We can write the time-continuous extension $	\hat{v}^{sym}= 	(\hat{v}^{sym}_t)_{t \in [0,T]}$ of  (SE) as
	\begin{displaymath}
		\hat{v}^{sym}_t = \left|\hat{v}^{sym}_{\eta(t)}+\kappa\left(\theta-\hat{v}^{sym}_{\eta(t)}\right)(t-\eta(t))+\sigma\sqrt{\hat{v}^{sym}_{\eta(t)}}\left(W_t-W_{\eta(t)}\right)\right|
	\end{displaymath}
	on each interval $[t_k,t_{k+1})$ and the time-continuous extension $	\hat{v}^{abs}= 	(\hat{v}^{abs}_t)_{t \in [0,T]}$ of  (AE) as
	\begin{displaymath}
		\hat{v}^{abs}_t = \left(\hat{v}^{abs}_{\eta(t)}+\kappa\left(\theta-\hat{v}^{abs}_{\eta(t)}\right)(t-\eta(t))+\sigma\sqrt{\hat{v}^{abs}_{\eta(t)}}\left(W_t-W_{\eta(t)}\right)\right)^{+},
	\end{displaymath}
	respectively.
	Now, let  $\star \in \{sym, abs\}$. We define
	\begin{displaymath}
		z_t^{\star}:=\hat{v}_{\eta(t)}^{\star}+\kappa\big(\theta-\hat{v}_{\eta(t)}^{\star}\big)(t-\eta(t))+\sigma\sqrt{\hat{v}_{\eta(t)}^{\star}}\left(W_{t}-W_{\eta(t)}\right)
	\end{displaymath}
	and use the Tanaka-Meyer formula  for $\hat{v}^{sym}_t=\bigl|z_t^{sym}\bigr|$ and  for $\hat{v}^{abs}_t=\left(z_t^{abs}\right)^+$ to obtain
	\begin{equation*}
		\begin{aligned}
			\hat{v}^{sym}_t=\hat{v}_{\eta(t)}^{sym} & +\int_{\eta(t)}^{t} \sign\left(z_s^{sym}\right)\kappa\left(\theta-\hat{v}_{\eta(s)}^{sym}\right)ds+\sigma\int_{\eta(t)}^{t}\sign\left(z_s^{sym}\right)\sqrt{\hat{v}_{\eta(s)}^{sym}}dW_s  \\ & + L_t^0(z^{sym})-L_{\eta(t)}^0(z^{sym}),\qquad t \in [0,T],
		\end{aligned}
	\end{equation*}
	and
	\begin{equation*}
		\begin{aligned}
			\hat{v}^{abs}_t=\text{ }\hat{v}_{\eta(t)}^{abs} &+\int_{\eta(t)}^{t}\mathbbm{1}_{\{z_s^{abs} > 0\}}\kappa\left(\theta-\hat{v}_{\eta(s)}^{abs}\right)ds+\sigma\int_{\eta(t)}^{t}\mathbbm{1}_{\{z_s^{abs}> 0\}}\sqrt{\hat{v}_{\eta(s)}^{abs}}dW_s \\ & +\frac{1}{2}\left( L_t^0(z^{abs})-L_{\eta(t)}^0(z^{abs})\right),\qquad t \in [0,T].
		\end{aligned}
	\end{equation*}
	Here $L^0(z^{\star})=(L_t^0(z^{\star}))_{t\in [0,T]}$ is the local time  of $z^{\star}$ in $x=0$. For almost all $\omega \in \Omega$  the map $[0,T] \ni t\mapsto [L_t^0(z^{\star})](\omega) \in \mathbb{R}$ is continuous and non-decreasing with $L_0^0(z^{\star})=0$. See e.g.~Theorem 7.1 in Chapter III of  \cite{KS}.
	We can rewrite both schemes as
	\begin{equation}\label{helpsym}
		\begin{aligned}
			\hat{v}^{\star}_t=&\hat{v}_{\eta(t)}^{\star} +\int_{\eta(t)}^{t} \kappa\left(\theta-\hat{v}_{\eta(s)}^{\star}\right)ds+\sigma\int_{\eta(t)}^{t}\sqrt{\hat{v}_{\eta(s)}^{\star}}dW_s\\
			&-2c^{\star}\sigma\int_{\eta(t)}^{t}\mathbbm{1}_{\{z_s^{\star} \le 0\}} \sqrt{\hat{v}_{\eta(s)}^{\star}}dW_s-2c^{\star}\int_{\eta(t)}^{t}\mathbbm{1}_{\{z_s^{\star} \le 0\}} \kappa\left(\theta-\hat{v}_{\eta(s)}^{\star}\right)ds\\
			&+c^{\star}\left( L_t^0(z^{\star})-L_{\eta(t)}^0(z^{\star})\right), \qquad t \in [0,T],
		\end{aligned}
	\end{equation}
	with $c^{sym}=1$ and $c^{abs}=\frac{1}{2}$.

	\begin{lemma}\label{boundedscheme2} Let $\star \in \{sym,abs\}$ and $p \geq 1$. Then, there exists  $C_p >0$ such that
		\begin{displaymath}
			\mathbb{E}\left[  \sup_{t\in [0,T]} |\hat{v}_t^{\star}|^p \right] +	\sup_{t \in [0,T]}  \mathbb{E} \left[  \frac{ | \hat{v}^{\star}_{t} - \hat{v}^{\star}_{\eta(t)}|^p}{ \, |t-\eta(t)|^{p/2}}  \right] \leq C_p.
		\end{displaymath} 
	\end{lemma}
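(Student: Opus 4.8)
The plan is to follow the scheme of Lemma \ref{boundedscheme}, the new feature being the local time terms in the representation \eqref{helpsym}. As there, it suffices to treat $p\ge 2$, the case $p\in[1,2)$ following from Lyapunov's inequality (for the increment term one divides the $p=2$ bound by $|t-\eta(t)|^{p/2}$ and uses $\mathbb{E}[|Y|^p]\le\mathbb{E}[|Y|^2]^{p/2}$). Throughout I may assume $\Delta t$ to be sufficiently small (say $\kappa\Delta t\le 1$), since the finitely many coarser discretizations only enlarge the constant. I first establish the pointwise bound $\sup_{t\in[0,T]}\mathbb{E}[|\hat v_t^{\star}|^p]<\infty$. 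Because $\hat v_t^{\star}\in\{|z_t^{\star}|,(z_t^{\star})^+\}$ one always has $\hat v_t^{\star}\le|z_t^{\star}|$, and conditionally on $\mathcal F_{\eta(t)}$ the variable $z_t^{\star}$ is Gaussian with mean $\hat v_{\eta(t)}^{\star}+\kappa(\theta-\hat v_{\eta(t)}^{\star})(t-\eta(t))$ and variance $\sigma^2\hat v_{\eta(t)}^{\star}(t-\eta(t))$. Keeping the sharp first order in $\Delta t$ in the conditional Gaussian moment yields the one-step estimate
\[
\mathbb{E}\!\left[(\hat v_{t_{k+1}}^{\star})^p\mid\mathcal F_{t_k}\right]\le(1+C_p\Delta t)(\hat v_{t_k}^{\star})^p+C_p\Delta t ,
\]
and the discrete Gronwall lemma then gives $\sup_k\mathbb{E}[(\hat v_{t_k}^{\star})^p]\le C_p$; the same estimate for $t\in(t_k,t_{k+1})$ delivers $\sup_{t\in[0,T]}\mathbb{E}[|\hat v_t^{\star}|^p]\le C_p$.

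The crucial step is to control the local time $L_T^0(z^{\star})$. Summing \eqref{helpsym} over the subintervals, using that $L^0(z^{\star})$ is continuous, non-decreasing and vanishes at $0$, and rearranging, I obtain
\[
c^{\star}L_T^0(z^{\star})=\hat v_T^{\star}-v_0-\int_0^T\kappa(\theta-\hat v_{\eta(s)}^{\star})\,ds-M_T+2c^{\star}\tilde M_T+2c^{\star}\int_0^T\mathbbm 1_{\{z_s^{\star}\le 0\}}\kappa(\theta-\hat v_{\eta(s)}^{\star})\,ds ,
\]
where $M_t=\sigma\int_0^t\sqrt{\hat v_{\eta(s)}^{\star}}\,dW_s$ and $\tilde M_t=\sigma\int_0^t\mathbbm 1_{\{z_s^{\star}\le 0\}}\sqrt{\hat v_{\eta(s)}^{\star}}\,dW_s$. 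Each term on the right has bounded $p$-th moment: $\hat v_T^{\star}$ by the pointwise bound, the drift integrals by H\"older's inequality and the pointwise bound, and $M_T,\tilde M_T$ by the Burkholder-Davis-Gundy inequality (Proposition \ref{prop:martingalemoments}) together with $\langle M\rangle_T,\langle\tilde M\rangle_T\le\sigma^2\int_0^T\hat v_{\eta(s)}^{\star}\,ds$ and the pointwise bound. Hence $\mathbb{E}[(L_T^0(z^{\star}))^p]\le C_p$.

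For the supremum bound I use the same representation: since $L_t^0(z^{\star})\le L_T^0(z^{\star})$ and the drift integrals are dominated by their absolute integrals over $[0,T]$,
\[
\sup_{t\in[0,T]}\hat v_t^{\star}\le v_0+(1+2c^{\star})\int_0^T\kappa|\theta-\hat v_{\eta(s)}^{\star}|\,ds+\sup_{t\in[0,T]}|M_t|+2c^{\star}\sup_{t\in[0,T]}|\tilde M_t|+c^{\star}L_T^0(z^{\star}).
\]
Taking $p$-th moments, bounding $\mathbb{E}[\sup_t|M_t|^p]$ and $\mathbb{E}[\sup_t|\tilde M_t|^p]$ by Doob's and the Burkholder-Davis-Gundy inequalities (Propositions \ref{prop:doob} and \ref{prop:martingalemoments}) followed by the pointwise bound, and invoking the local time estimate above, gives $\mathbb{E}[\sup_t|\hat v_t^{\star}|^p]\le C_p$. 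Finally, for the increment term I use $z_{\eta(t)}^{\star}=\hat v_{\eta(t)}^{\star}$ and the $1$-Lipschitz continuity of $|\cdot|$ and $(\cdot)^+$ to get
\[
|\hat v_t^{\star}-\hat v_{\eta(t)}^{\star}|\le|z_t^{\star}-z_{\eta(t)}^{\star}|=\left|\kappa(\theta-\hat v_{\eta(t)}^{\star})(t-\eta(t))+\sigma\sqrt{\hat v_{\eta(t)}^{\star}}\bigl(W_t-W_{\eta(t)}\bigr)\right|;
\]
dividing by $|t-\eta(t)|^{p/2}$, taking expectations, and exploiting the independence of $W_t-W_{\eta(t)}$ from $\mathcal F_{\eta(t)}$ reduces the whole term to the pointwise moment bound.

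The only genuine difficulty relative to Case I is the local time, which blocks a naive pathwise Gronwall argument: bounding $\hat v^{\star}$ by $|z^{\star}|$ and iterating would replace the cancelling It\=o integral by a sum of absolute martingale increments, which grows like $\sqrt{N}$. The device that resolves this is the second step: instead of estimating the local time directly, I read it off from the rearranged representation, where it is expressed solely through the value $\hat v_T^{\star}$ at the terminal time and Riemann/It\=o integrals, all of which are already controlled by the pointwise moment bound.
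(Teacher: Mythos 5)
Your argument is correct, but it is not the route the paper takes: the paper's proof of Lemma \ref{boundedscheme2} consists of two citations (the supremum bound is Lemma 2.1 in \cite{BO}, extended to the absorbed scheme, and the increment bound is Lemma 3.7 in \cite{mickel2021weak}), so you have in effect reconstructed the content of those external results from scratch. Your two key devices are sound: the pointwise moment bound via the conditional Gaussian one-step estimate $\mathbb{E}[(\hat v_{t_{k+1}}^{\star})^p\mid\mathcal F_{t_k}]\le(1+C_p\Delta t)(\hat v_{t_k}^{\star})^p+C_p\Delta t$ plus discrete Gronwall (this does require the refined inequality $(a+b)^p\le a^p+C_p(a^{p-1}b+b^p)$ together with Young's inequality on the cross term $m^{p-1}s$, not the crude $C_p(a^p+b^p)$, but your phrase ``keeping the sharp first order'' signals exactly this), and, more importantly, the observation that $c^{\star}L_T^0(z^{\star})$ can be solved for from the telescoped representation \eqref{helpsym} and is therefore controlled by quantities already bounded by the pointwise estimate --- which is precisely what unblocks the supremum bound that a naive pathwise Gronwall iteration cannot reach. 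The increment bound via the $1$-Lipschitz continuity of $|\cdot|$ and $(\cdot)^+$ together with $z_{\eta(t)}^{\star}=\hat v_{\eta(t)}^{\star}$ is the standard argument. What the paper's approach buys is brevity; what yours buys is a self-contained proof that makes visible why the local time, the only genuinely new term relative to Lemma \ref{boundedscheme}, causes no loss of moments.
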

	\begin{proof}
		The finiteness of the first summand can be found in Lemma 2.1 in \cite{BO}  for the symmetrized Euler scheme and can obtained analogously for the absorbed Euler scheme. The finiteness of the second summand is established in Lemma 3.7 in
		\cite{mickel2021weak}.
	\end{proof}
	The next two lemmas are Propositions 3.6  and 3.9 from 	\cite{mickel2021weak}.

	\begin{lemma}\label{Zzero}
		For $\Delta t < \frac{1}{\kappa}$ and $\varepsilon\in(0,1/2]$ we have that
		\begin{equation}\label{problemma3.6}
			P(z_t^{\star} \leq 0) \leq c \left(\frac{\Delta t}{\varepsilon}\right)^{\nu(1-\varepsilon)}, \qquad t \in [0,T] \setminus \{t_0,t_1, \ldots, t_N\},
		\end{equation}
		for $\star \in \{sym, abs\}$,  where $c$ is given by  
			\begin{align*} \label{const_lemma3.7} c= \exp\left(\kappa\left(\nu T+\frac{2v_0}{\sigma^2}\right)\right)\left(\max\left\{1,\frac{\sigma^2\nu}{v_0e}\right\}\right)^{\nu}. \end{align*} 
	\end{lemma}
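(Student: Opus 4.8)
The plan is to condition on $\mathcal{F}_{\eta(t)}$ and exploit that $z_t^\star$ is then Gaussian. Fix $t\notin\{t_0,\dots,t_N\}$ and set $h:=t-\eta(t)\in(0,\Delta t)$. Since $W_t-W_{\eta(t)}$ is $N(0,h)$-distributed and independent of $\mathcal{F}_{\eta(t)}$, conditionally on $\mathcal{F}_{\eta(t)}$ the variable $z_t^\star$ is normal with mean $m:=\hat v_{\eta(t)}^\star(1-\kappa h)+\kappa\theta h$ and variance $s^2:=\sigma^2\hat v_{\eta(t)}^\star h$. The hypothesis $\Delta t<1/\kappa$ gives $1-\kappa h>0$, hence $m>0$; on the event $\{\hat v_{\eta(t)}^\star=0\}$ we have $z_t^\star=\kappa\theta h>0$ deterministically, so this event does not contribute and we may work on $\{\hat v_{\eta(t)}^\star>0\}$.

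I would then apply the Gaussian tail bound $\Phi(-x)\le\tfrac12e^{-x^2/2}$ for $x\ge0$ to get $P(z_t^\star\le0\mid\mathcal{F}_{\eta(t)})\le\tfrac12\exp(-m^2/(2s^2))$, and expand, using $\nu=2\kappa\theta/\sigma^2$,
\begin{equation*}
\frac{m^2}{2s^2}=\underbrace{\lambda_1\,\hat v_{\eta(t)}^\star}_{\mathrm{(I)}}+\frac{\nu}{2}(1-\kappa h)+\underbrace{\frac{\lambda_2}{\hat v_{\eta(t)}^\star}}_{\mathrm{(II)}},\qquad \lambda_1:=\frac{(1-\kappa h)^2}{2\sigma^2 h},\quad\lambda_2:=\frac{(\kappa\theta)^2h}{2\sigma^2}.
\end{equation*}
It is essential to retain both term $\mathrm{(I)}$, which controls large values of $\hat v_{\eta(t)}^\star$, and term $\mathrm{(II)}$, which controls small values and in particular forces the contribution of the atom $\{\hat v_{\eta(t)}^\star=0\}$ of the absorbed scheme to vanish. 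Discarding the bounded middle term, the claim reduces to bounding
\begin{equation*}
P(z_t^\star\le0)\le\tfrac12\,\mathbb{E}\!\left[\exp\!\left(-\lambda_1\hat v_{\eta(t)}^\star-\frac{\lambda_2}{\hat v_{\eta(t)}^\star}\right)\right].
\end{equation*}

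The exponent $\lambda_1 v+\lambda_2/v$ is minimized at $v^\star=\sqrt{\lambda_2/\lambda_1}=\kappa\theta h/(1-\kappa h)\asymp\Delta t$, with minimal value $2\sqrt{\lambda_1\lambda_2}=\tfrac{\nu}{2}(1-\kappa h)$, so the integrand is of order one only on the scale $\hat v_{\eta(t)}^\star\asymp\Delta t$ and super-exponentially small away from it. Thus the whole quantity is governed by how much mass the scheme puts at the scale $\Delta t$ near the origin, and obtaining a sharp estimate for this is the step I expect to be the main obstacle, being exactly where the Feller index enters. Concretely I would trade the singular term via $e^{-\lambda_2/v}\le(q/e)^q(v/\lambda_2)^q$ for $q>0$ (the identity $\sup_{w>0}w^qe^{-w}=(q/e)^q$ being the source of the factor $(\sigma^2\nu/(v_0e))^\nu$ and simultaneously annihilating the atom at $0$), reducing matters to an estimate of $\mathbb{E}[(\hat v_{\eta(t)}^\star)^q e^{-\lambda_1\hat v_{\eta(t)}^\star}]$, and then control the near-origin behaviour of the law of $\hat v_{\eta(t)}^\star$, showing it is of the CIR-type order $x^{\nu-1}$ uniformly over the grid and over $\star\in\{sym,abs\}$. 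Balancing the two scales $\lambda_1\asymp\Delta t^{-1}$ and $\lambda_2\asymp\Delta t$ then yields the order $(\Delta t)^{\nu(1-\varepsilon)}$: the accumulation of $(1-\kappa\Delta t)^{-\nu}$-type factors over the at most $T/\Delta t$ steps produces $\exp(\kappa\nu T)$ and the deterministic initial datum produces $\exp(2\kappa v_0/\sigma^2)$, while the need to take the power $q=\nu(1-\varepsilon)$ strictly below $\nu$ (so that the relevant near-origin estimate stays finite) is what degrades the exponent from $\nu$ to $\nu(1-\varepsilon)$ and introduces the $1/\varepsilon$.

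Finally, I would stress that the conditioning and Gaussian-tail steps are routine and identical for the symmetrized and the absorbed scheme; the genuine difficulty is the quantitative near-origin control of the Euler schemes (a small-ball estimate at scale $\Delta t$) encoded in the reduced expectation above, and it is this estimate — rather than any property of the continuous CIR process — that carries the dependence on $\nu$ and hence determines the exponent $\nu(1-\varepsilon)$.
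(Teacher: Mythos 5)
First, note that the paper does not prove this lemma at all: it is imported verbatim as Propositions 3.6 and 3.9 of \cite{mickel2021weak}, so there is no in-paper argument to compare against. Your opening reduction is correct and is the standard first step: conditioning on $\cF_{\eta(t)}$, the Gaussian tail bound, and the decomposition of $m^2/(2s^2)$ into $\lambda_1 \hat v^{\star}_{\eta(t)} + \tfrac{\nu}{2}(1-\kappa h) + \lambda_2/\hat v^{\star}_{\eta(t)}$ are all right, as is the observation that the atom of the absorbed scheme at $0$ is harmless because there $z_t^{\star}=\kappa\theta h>0$.

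The genuine gap is that everything after that reduction is asserted rather than proved, and the assertion carries the entire content of the lemma. Since $\lambda_1\lambda_2=(\nu(1-\kappa h)/4)^2=O(1)$, the integrand $\exp(-\lambda_1 v-\lambda_2/v)$ is of order one on the scale $v\asymp\Delta t$, so no pointwise manipulation can help: one must prove a quantitative statement about how much mass the law of $\hat v^{\star}_{\eta(t)}$ puts near the origin, \emph{uniformly over all grid points and uniformly in $N$}. You propose to do this via a pointwise density bound of order $x^{\nu-1}$ for the Euler scheme near $0$; this is a strong claim that you do not establish, and it is arguably harder than the lemma itself (for the absorbed scheme the law is not even absolutely continuous). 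The route actually used in \cite{mickel2021weak} (going back to \cite{BO}) avoids densities entirely: one bounds the Laplace transform $u_k(\lambda)=\mathbb{E}[\exp(-\lambda\hat v^{\star}_{t_k})]$ by the one-step Gaussian identity $\mathbb{E}[e^{-\lambda z_{t_k}^{\star}}\mid\cF_{t_{k-1}}]=\exp(-\lambda\kappa\theta\Delta t-\mu(\lambda)\hat v^{\star}_{t_{k-1}})$ with $\mu(\lambda)=\lambda(1-\kappa\Delta t)-\lambda^2\sigma^2\Delta t/2$, uses $e^{-\lambda\hat v^{\star}_{t_k}}\le e^{-\lambda z_{t_k}^{\star}}$, and iterates the recursion $u_k(\lambda)\le e^{-\lambda\kappa\theta\Delta t}u_{k-1}(\mu(\lambda))$ down to $u_0(\cdot)=e^{-(\cdot)v_0}$; the constraint that $\mu$ can be iterated $N\asymp T/\Delta t$ times starting from $\lambda\asymp\varepsilon/(\sigma^2\Delta t)$ is precisely where the factor $\varepsilon^{-\nu(1-\varepsilon)}$, the loss from $\nu$ to $\nu(1-\varepsilon)$, and the constants $e^{\kappa\nu T}$ and $e^{2\kappa v_0/\sigma^2}$ arise. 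Your heuristic for the constants is consistent with this, but as written your argument reduces the lemma to an unproved small-ball estimate, so it is not a proof.
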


	\begin{lemma}\label{boundlocal}
		Let $\beta>0$,  $\varepsilon \in (0,1/2]$, $\Delta t\le\frac{1}{2\kappa}$ and $\star \in \{sym, abs\}$. Then, there exists a constant $C_{\beta}>0$  such that
		\begin{displaymath}
			\mathbb{E}\left[L_t^0\left(z^{\star}\right)-L_{\eta(t)}^0\left(z^{\star}\right)\right]\le C_\beta\Delta t \left(\frac{\Delta t}{\varepsilon}\right)^{\nu\frac{1-\varepsilon}{1+\beta}}, \qquad t \in [0,T].
		\end{displaymath}
	\end{lemma}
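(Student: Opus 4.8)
The plan is to convert the Tanaka--Meyer representation \eqref{helpsym} into an exact identity for the expected local-time increment and then to estimate the two resulting terms separately. Since at grid points the increment is trivially zero ($\eta(t)=t$), I may assume $\eta(t)<t$. Taking expectations in \eqref{helpsym} on $[\eta(t),t]$, the $\mathbbm{1}_{\{z_s^\star\le0\}}$-weighted stochastic integral vanishes (it is a true martingale, its integrand being bounded in $L^2$ by Lemma \ref{boundedscheme2}), while the non-weighted terms combine, by the very definition of $z_t^\star$ and using $\hat{v}_t^\star=|z_t^\star|$ (resp. $(z_t^\star)^+$), into $\hat{v}_t^\star-z_t^\star=2c^\star(z_t^\star)^-$, where $(z_t^\star)^-=\max\{-z_t^\star,0\}$. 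This yields
\[
\mathbb{E}\big[L_t^0(z^\star)-L_{\eta(t)}^0(z^\star)\big]=2\,\mathbb{E}\big[(z_t^\star)^-\big]+2\kappa\,\mathbb{E}\Big[\int_{\eta(t)}^t\mathbbm{1}_{\{z_s^\star\le0\}}\big(\theta-\hat{v}_{\eta(s)}^\star\big)\,ds\Big],
\]
and it remains to bound each term by $C_\beta\Delta t(\Delta t/\varepsilon)^{\nu(1-\varepsilon)/(1+\beta)}$.

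The integral term is the easy one and is precisely where Lemma \ref{Zzero} and the exponent $\tfrac{1-\varepsilon}{1+\beta}$ enter. For fixed $s$ I would apply H\"older's inequality with conjugate exponents $1+\beta$ and $\tfrac{1+\beta}{\beta}$,
\[
\mathbb{E}\big[\mathbbm{1}_{\{z_s^\star\le0\}}\,|\theta-\hat{v}_{\eta(s)}^\star|\big]\le P\big(z_s^\star\le0\big)^{\frac{1}{1+\beta}}\;\mathbb{E}\big[|\theta-\hat{v}_{\eta(s)}^\star|^{\frac{1+\beta}{\beta}}\big]^{\frac{\beta}{1+\beta}}.
\]
The moment factor is bounded by a constant (Lemma \ref{boundedscheme2}) and the probability factor by $c^{1/(1+\beta)}(\Delta t/\varepsilon)^{\nu(1-\varepsilon)/(1+\beta)}$ (Lemma \ref{Zzero}). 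Integrating over $s\in[\eta(t),t]$ supplies the remaining factor $t-\eta(t)\le\Delta t$, giving exactly the claimed bound for this term.

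The term $\mathbb{E}[(z_t^\star)^-]$ is the delicate one and, I expect, the main obstacle. The naive route — bounding $(z_t^\star)^-\le\mathbbm{1}_{\{z_t^\star\le0\}}|z_t^\star-z_{\eta(t)}^\star|$ and then applying H\"older with Lemma \ref{Zzero} — is \emph{not} sufficient, because $|z_t^\star-z_{\eta(t)}^\star|$ has $L^q$-norm only of order $\sqrt{\Delta t}$, and one loses a full factor $\Delta t^{1/2}$ against the target; this crude estimate ignores the correlation between the crossing $\{z_t^\star\le0\}$ and the event that $\hat{v}_{\eta(t)}^\star$ is small. To capture it, I would condition on $\mathcal{F}_{\eta(t)}$: there $z_t^\star$ is Gaussian with mean $m=\hat{v}_{\eta(t)}^\star\big(1-\kappa(t-\eta(t))\big)+\kappa\theta(t-\eta(t))\ge\tfrac12\hat{v}_{\eta(t)}^\star$ (using $\Delta t\le\tfrac{1}{2\kappa}$) and variance $\sigma^2\hat{v}_{\eta(t)}^\star(t-\eta(t))$. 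Since $\mathbb{E}[(m+sZ)^-]\le s\,\varphi(m/s)$ for a standard normal $Z$ with density $\varphi$, this gives
\[
\mathbb{E}\big[(z_t^\star)^-\mid\mathcal{F}_{\eta(t)}\big]\le C\sqrt{\hat{v}_{\eta(t)}^\star\,\Delta t}\,\exp\!\Big(-\frac{\hat{v}_{\eta(t)}^\star}{C\,\Delta t}\Big).
\]

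It then remains to bound $\mathbb{E}\big[\sqrt{\hat{v}_{\eta(t)}^\star}\,\exp(-\hat{v}_{\eta(t)}^\star/(C\Delta t))\big]$. The exponential weight localises $\hat{v}_{\eta(t)}^\star$ to the scale $\Delta t$, so a dyadic decomposition over $\{\hat{v}_{\eta(t)}^\star\in[2^j\Delta t,2^{j+1}\Delta t)\}$, $j\ge0$, combined with the fast decay of the weight, reduces the estimate to a small-ball bound of the form $P(\hat{v}_{\eta(t)}^\star\le x)\le C(x/\varepsilon)^{\nu(1-\varepsilon)}$. This sharp control of how close the scheme comes to zero — in the same spirit as, and provable by the same Feller-type/comparison arguments that underlie, Lemma \ref{Zzero} — is the genuine analytic core of the proof; everything else is bookkeeping. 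Carrying it out yields $\mathbb{E}[(z_t^\star)^-]\le C\,\Delta t\,(\Delta t/\varepsilon)^{\nu(1-\varepsilon)}$, which is even stronger than required (the exponent $\nu(1-\varepsilon)$ exceeds $\tfrac{\nu(1-\varepsilon)}{1+\beta}$), and together with the integral term this completes the proof. Applying the de Angelis bound of Lemma \ref{lemmadeAngelis} directly to $z^\star$ on $[\eta(t),t]$ is a viable alternative, but it runs into the very same near-zero estimate.
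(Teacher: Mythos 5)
First, a point of comparison: the paper does not prove this lemma at all --- it is quoted verbatim as Proposition~3.9 of \cite{mickel2021weak} --- so there is no in-paper argument to measure you against. Your reduction is nevertheless the natural one and its first half is correct: taking expectations in \eqref{helpsym} over $[\eta(t),t]$, using that $z_t^{\star}$ is exactly the sum of the first three terms on the right-hand side and that $\hat{v}_t^{\star}-z_t^{\star}=2c^{\star}(z_t^{\star})^-$, gives the identity
$\mathbb{E}[L_t^0(z^{\star})-L_{\eta(t)}^0(z^{\star})]=2\,\mathbb{E}[(z_t^{\star})^-]+2\kappa\,\mathbb{E}\bigl[\int_{\eta(t)}^t\mathbbm{1}_{\{z_s^{\star}\le0\}}(\theta-\hat{v}_{\eta(s)}^{\star})\,ds\bigr]$,
and your H\"older/Lemma~\ref{Zzero}/Lemma~\ref{boundedscheme2} treatment of the integral term is exactly right: it produces the exponent $\nu\frac{1-\varepsilon}{1+\beta}$ together with the factor $\Delta t$ coming from the length of the interval.

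The gap is in the term $\mathbb{E}[(z_t^{\star})^-]$. Your conditional Gaussian computation giving $\mathbb{E}[(z_t^{\star})^-\mid\mathcal{F}_{\eta(t)}]\le C\sqrt{\hat{v}_{\eta(t)}^{\star}\Delta t}\,\exp(-\hat{v}_{\eta(t)}^{\star}/(C\Delta t))$ is correct, but the argument then hinges entirely on the small-ball estimate $P(\hat{v}_{\eta(t)}^{\star}\le x)\le C(x/\varepsilon)^{\nu(1-\varepsilon)}$, which you yourself identify as ``the genuine analytic core'' and then do not prove. That estimate is not available in this paper --- Lemma~\ref{Zzero} controls only $P(z_t^{\star}\le0)$, not the law of $\hat{v}_{t_k}^{\star}$ near an arbitrary level $x$ --- so as written the proof is not closed. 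Moreover the detour is unnecessary: on the event $\{z_t^{\star}\le0\}$ one has, since $\kappa(t-\eta(t))\le\frac12$, the inequality $\sigma\sqrt{\hat{v}_{\eta(t)}^{\star}}(W_t-W_{\eta(t)})\le-\frac12\hat{v}_{\eta(t)}^{\star}$, hence $\sqrt{\hat{v}_{\eta(t)}^{\star}}\le2\sigma|W_t-W_{\eta(t)}|$ and therefore $(z_t^{\star})^-\le\sigma\sqrt{\hat{v}_{\eta(t)}^{\star}}\,|W_t-W_{\eta(t)}|\le2\sigma^2|W_t-W_{\eta(t)}|^2$ on that event. H\"older with exponents $\frac{1+\beta}{\beta}$ and $1+\beta$ together with Lemma~\ref{Zzero} then gives
$\mathbb{E}[(z_t^{\star})^-]\le2\sigma^2\bigl(\mathbb{E}|W_t-W_{\eta(t)}|^{2(1+\beta)/\beta}\bigr)^{\beta/(1+\beta)}P(z_t^{\star}\le0)^{1/(1+\beta)}\le C_{\beta}\Delta t\,(\Delta t/\varepsilon)^{\nu\frac{1-\varepsilon}{1+\beta}}$.
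The factor $\Delta t$ you were missing in the ``naive route'' comes from this self-improving inequality on the crossing event, not from a small-ball bound; with this replacement your proof is complete using only the lemmas stated in the paper.
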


	The following Lemma gives a control of the non-martingale terms, which arise additionally in the expansion of SE and AE, i.e. in \eqref{helpsym}.

	\begin{lemma}\label{Rem-SE-AE} 	Let $\Delta t\le\frac{1}{2\kappa}$, $\varepsilon \in (0,1/2]$, $\beta>0$ and $\star \in \{sym, abs\}$. Moreover,
		let $g:\mathbb{R}^2 \rightarrow \mathbb{R}$ be measurable and  bounded, and let $h: \mathbb{R} \rightarrow \mathbb{R}$ be measurable and of linear growth. 
		Then there exist constants $C_{g,h,\beta}>0$ and  $C_{g, \beta} >0$ such that 
		\begin{displaymath}
			\sup_{t\in [0,T]} \mathbb{E} \left[ \left|  \int_0^t g(V_u,\hat{v}^{\star}_u) h(\hat{v}^{\star}_{\eta(u)}) \mathbbm{1}_{\{z_u^{\star} \le 0\}} du \right|\right]  \le C_{g,h,\beta} \left(\frac{\Delta t}{\varepsilon}\right)^{\nu\frac{1-\varepsilon}{1+\beta}}
		\end{displaymath}
		and
		\begin{displaymath}
			\sup_{t\in [0,T]} \mathbb{E} \left[ \left| \int_0^t  g(V_u,\hat{v}^{\star}_u) d L^0_u(z^{\star})\right|\right]  \le C_{g, \beta } \left(\frac{\Delta t}{\varepsilon}\right)^{\nu\frac{1-\varepsilon}{1+\beta}}.
		\end{displaymath}
	\end{lemma}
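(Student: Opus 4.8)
The plan is to handle the two estimates separately; in both cases the factor $g(V_u,\hat{v}^\star_u)$ is harmless because $g$ is bounded, and what remains is controlled by Lemma \ref{Zzero} for the first estimate and by Lemma \ref{boundlocal} for the second.

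For the first estimate, I would bound $|g| \le \|g\|_\infty$, pass the absolute value inside the time integral, and use Tonelli's theorem to reduce matters to $\int_0^T \mathbb{E}\big[|h(\hat{v}^\star_{\eta(u)})|\,\mathbbm{1}_{\{z_u^\star\le 0\}}\big]\,du$; the grid points $t_0,\dots,t_N$ may be discarded as a Lebesgue-null set, so Lemma \ref{Zzero} applies for almost every $u$. The decisive step is H\"older's inequality with conjugate exponents $p=(1+\beta)/\beta$ and $q=1+\beta$, giving
$$\mathbb{E}\big[|h(\hat{v}^\star_{\eta(u)})|\,\mathbbm{1}_{\{z_u^\star\le 0\}}\big]\le \big(\mathbb{E}\big[|h(\hat{v}^\star_{\eta(u)})|^p\big]\big)^{1/p}\,P(z_u^\star\le 0)^{1/(1+\beta)}.$$
The choice $q=1+\beta$ is made precisely so that Lemma \ref{Zzero} yields the exponent $\nu(1-\varepsilon)/(1+\beta)$ required by the statement. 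Since $h$ is of linear growth, $|h(\hat{v}^\star_{\eta(u)})|^p\le C(1+|\hat{v}^\star_{\eta(u)}|)^p$, so the first factor is bounded uniformly in $u$ by the moment estimate of Lemma \ref{boundedscheme2}. Integrating over $[0,T]$ then produces the claimed bound, with $C_{g,h,\beta}$ collecting $T$, $\|g\|_\infty$, the moment constant, and $c^{1/(1+\beta)}$.

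For the second estimate, I would use that $L^0(z^\star)$ is non-decreasing together with $|g|\le\|g\|_\infty$ to obtain
$$\Big|\int_0^t g(V_u,\hat{v}^\star_u)\,dL^0_u(z^\star)\Big|\le \|g\|_\infty\big(L^0_t(z^\star)-L^0_0(z^\star)\big)=\|g\|_\infty\,L^0_t(z^\star),$$
so it suffices to bound $\mathbb{E}[L^0_t(z^\star)]$. Writing this as the telescoping sum $\sum_k \mathbb{E}[L^0_{t_{k+1}\wedge t}(z^\star)-L^0_{t_k\wedge t}(z^\star)]$ over the at most $N$ subintervals and estimating each increment by Lemma \ref{boundlocal} (using continuity of $L^0$ for the full intervals and $\eta(t)$ for the final partial one), the $N$ terms combine with the single factor $\Delta t$ from that lemma via $N\Delta t=T$ to give $\mathbb{E}[L^0_t(z^\star)]\le C_\beta T(\Delta t/\varepsilon)^{\nu(1-\varepsilon)/(1+\beta)}$, uniformly in $t$. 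Multiplying by $\|g\|_\infty$ yields the second claim.

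Each individual step is routine; the one point requiring care is the bookkeeping of exponents, namely checking that summing the single-interval local-time bound over all $N$ intervals loses exactly one power of $\Delta t$ (absorbed into $T=N\Delta t$) and hence reproduces the order $(\Delta t/\varepsilon)^{\nu(1-\varepsilon)/(1+\beta)}$ --- the same order that the H\"older exponent $q=1+\beta$ extracts from Lemma \ref{Zzero} in the first estimate. Matching both mechanisms to the identical right-hand side is the crux; the remaining ingredients are the linear growth of $h$, the boundedness of $g$, and the moment bounds from Lemma \ref{boundedscheme2}.
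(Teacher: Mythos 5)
Your proposal is correct and follows essentially the same route as the paper: for the first bound, H\"older's inequality with exponent $1+\beta$ on the indicator combined with Lemma \ref{Zzero} and the moment bounds of Lemma \ref{boundedscheme2}; for the second, monotonicity of $L^0(z^\star)$ plus a telescoping sum over the $N$ subintervals estimated by Lemma \ref{boundlocal}, with $N\Delta t = T$ absorbing the extra factor of $\Delta t$. The only (harmless) additions are your explicit remarks that the grid points form a Lebesgue-null set and that the local-time integral is a pathwise Riemann--Stieltjes integral, both of which the paper also uses implicitly or states in passing.
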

	
	\begin{proof}
		(a) We start with the second assertion. Note that the integral under consideration is a pathwise Riemann-Stieltjes integral,  since $L^0(z^{\star})$ is positive and non-decreasing with $L_0^0(z^{\star})=0$. With $\|g \|_{\infty}= \sup_{x,y \in\mathbb{R}} |g(x,y)|$
		we then have
		\begin{displaymath}
			- \|g \|_{\infty} L^0_T(z^{\star}) \leq \int_0^t  g(V_u,\hat{v}^{\star}_u) d L^0_u(z^{\star}) \leq \|g \|_{\infty} L^0_T(z^{\star}), \qquad t \in [0,T].
		\end{displaymath}
		It follows 
		\begin{displaymath}
			\sup_{t\in [0,T]} \mathbb{E} \left[ \left| \int_0^t  g(V_u,\hat{v}^{\star}_u) d L^0_u(z^{\star})\right| \right]  \leq \|g \|_{\infty} \sum_{k=0}^{N-1}  \mathbb{E} \left[
			L_{t_{k+1}}^0\left(z^{\star}\right)-L_{t_k}^0\left(z^{\star}\right)\right]
		\end{displaymath}
		and Lemma \ref{boundlocal} gives
		\begin{displaymath}
			\sup_{t\in [0,T]} \mathbb{E} \left[ \left| \int_0^t  g(V_u,\hat{v}^{\star}_u) d L^0_u(z^{\star})\right|\right]   \leq  C_{g,\beta}   \left(\frac{\Delta t}{\varepsilon}\right)^{\nu\frac{1-\varepsilon}{1+\beta}}.
		\end{displaymath}
		
		(b) For the first assertion note that
		\begin{displaymath}
			\begin{aligned}
				& \sup_{t\in [0,T]} \mathbb{E} \left[ \left|  \int_0^t g(V_u,\hat{v}^{\star}_u) h(\hat{v}^{\star}_{\eta(u)}) \mathbbm{1}_{\{z_u^{\star} \le 0\}} du \right|\right]   \\ & \qquad  \leq C_h \| g \|_{\infty} \int_0^T \mathbb{E} \left[  \left(1+ \sup_{t \in [0,T]} |\hat{v}^{\star}_t| \right) \mathbbm{1}_{\{z_u^{\star} \le 0\}} \right] du.
			\end{aligned}     
		\end{displaymath} 
		An application of H\"older's inequality together with Lemma \ref{boundedscheme2}  then yields
		\begin{displaymath}
			\sup_{t\in [0,T]} \mathbb{E} \left[ \left|  \int_0^t g(V_u,\hat{v}^{\star}_u) h(\hat{v}^{\star}_{\eta(u)})
			\mathbbm{1}_{\{z_u^{\star} \le 0\}} du \right|\right]  \leq C_{ g,h, \beta } \int_0^T (P(z_u^{\star} \le 0))^{\frac{1}{1+\beta}} du     
		\end{displaymath} 
		for all $\beta >0$. Lemma \ref{Zzero}  implies now that
		\begin{displaymath}
			\begin{aligned}
				&\sup_{t\in [0,T]} \mathbb{E} \left[ \left|  \int_0^t g(V_u,\hat{v}^{\star}_u) h(\hat{v}^{\star}_{\eta(u)}) \mathbbm{1}_{\{z_u^{\star} \le 0\}} du \right| \right]
				\quad \leq C_{g, h, {\beta} }  \left(\frac{\Delta t}{\varepsilon}\right)^{\nu\frac{1-\varepsilon}{1+\beta}}.
			\end{aligned} 
		\end{displaymath} 
		which finishes the proof.
	\end{proof}

	\subsection{The Euler scheme for the log-price process}
	The  time-continuous extension $ \hat{x}=(\hat{x}_t)_{t \in [0,T]}$ of the Euler scheme for the log-price process in the Heston model is given by
	\begin{equation}\label{eulerscheme}
		\begin{aligned}
			\hat{x}_{t}=\hat{x}_{\eta(t)}&+\left(\mu-\frac{1}{2}\hat{v}_{\eta(t)}\right)(t-\eta(t))+\rho\sqrt{\hat{v}_{\eta(t)}}\left(W_t-W_{\eta(t)}\right)\\
			&+\sqrt{1-\rho^2}\sqrt{\hat{v}_{\eta(t)}}\left(B_t-B_{\eta(t)}\right).
		\end{aligned}
	\end{equation}
	For $\hat{v}$, we can choose one of the previously introduced schemes for the CIR process. We have the same results concerning the moment stability and the local smoothness as before.

	\begin{lemma}\label{boundedscheme3} Let $p \geq 1$.
		For the Euler scheme \eqref{eulerscheme} together with the scheme  \eqref{first} or \eqref{helpsym}, there exists $C_p >0$ such that
		\begin{displaymath}
			\mathbb{E}\left[  \sup_{t\in [0,T]} |\hat{x}_t|^p \right]  +	\sup_{0 \leq s< t \leq  T }  \mathbb{E} \left[ \frac{ | \hat{x}_t - \hat{x}_s|^p}{|t-s|^{p/2}}  \right]  \leq C_p.
		\end{displaymath}
	\end{lemma}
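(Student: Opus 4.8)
The plan is to mimic the proof of Lemma \ref{bounded-2}, replacing the CIR process $V$ and its square root by the Euler approximation $\hat{v}$ and the piecewise-frozen integrand $\sqrt{\hat{v}_{\eta(\cdot)}}$, and to feed in the moment and increment bounds for $\hat{v}$ that are already available. As in the earlier lemmas, I would first note that it suffices to treat $p\geq 2$, since the case $p\in[1,2)$ follows from the Lyapunov (Jensen) inequality. Writing the time-continuous extension \eqref{eulerscheme} in integral form,
$$
\hat{x}_t = x_0 + \int_0^t \Big(\mu-\tfrac12\hat{v}_{\eta(s)}\Big)\,ds + \rho\int_0^t \sqrt{\hat{v}_{\eta(s)}}\,dW_s + \sqrt{1-\rho^2}\int_0^t \sqrt{\hat{v}_{\eta(s)}}\,dB_s,
$$
I would use throughout that $\hat{v}$ (being $f_3(\bar v)$ in Case I with $f_3\in\{{\tt abs},{\tt sym}\}$, or $\hat{v}^{\star}\geq 0$ in Case II) is nonnegative, so that $\sqrt{\hat{v}_{\eta(s)}}$ is well defined and its square equals $\hat{v}_{\eta(s)}$; the uniform (in $N$) moment bounds $\sup_{t}\mathbb{E}[|\hat{v}_t|^q]\leq C_q$ are supplied by Lemma \ref{boundedscheme} (with $|\hat{v}_t|\le|\bar v_t|$) in Case I and by Lemma \ref{boundedscheme2} in Case II.

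For the supremum term I would split via Jensen into the drift and the two stochastic integrals. The drift is controlled pathwise by $\sup_{t}\big|\int_0^t(\mu-\tfrac12\hat{v}_{\eta(s)})ds\big|^p\le C_p(1+\int_0^T|\hat{v}_{\eta(s)}|^p\,ds)$, and taking expectations and using the moment bound for $\hat{v}$ gives a constant. Each martingale is handled by the Burkholder–Davis–Gundy inequality (Proposition \ref{prop:martingalemoments}), which directly bounds the supremum: with $\langle\,\cdot\,\rangle_T=\int_0^T\hat{v}_{\eta(s)}\,ds$ one gets
$$
\mathbb{E}\Big[\sup_{t\in[0,T]}\Big|\int_0^t\sqrt{\hat{v}_{\eta(s)}}\,dW_s\Big|^p\Big]\le K_{p/2}\,\mathbb{E}\Big[\Big(\int_0^T\hat{v}_{\eta(s)}\,ds\Big)^{p/2}\Big]\le C_p\int_0^T\mathbb{E}\big[|\hat{v}_{\eta(s)}|^{p/2}\big]\,ds\le C_p,
$$
using H\"older on the finite interval $[0,T]$ and again the moment bound; the $dB$-integral is identical. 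Summing the three contributions yields the bound on the first summand.

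For the increment term I would write, for $0\le s<t\le T$, $\hat{x}_t-\hat{x}_s$ as the sum of the drift and the two stochastic integrals over $[s,t]$, and bound $\mathbb{E}[|\hat{x}_t-\hat{x}_s|^p]$ termwise as in Lemma \ref{bounded-2}. H\"older gives $\mathbb{E}\big[\big|\int_s^t(\mu-\tfrac12\hat{v}_{\eta(u)})du\big|^p\big]\le |t-s|^{p-1}\int_s^t\mathbb{E}[|\mu-\tfrac12\hat{v}_{\eta(u)}|^p]\,du\le C_p|t-s|^{p}$, while BDG followed by H\"older gives $\mathbb{E}\big[\big|\int_s^t\sqrt{\hat{v}_{\eta(u)}}\,dW_u\big|^p\big]\le K_{p/2}\,|t-s|^{p/2-1}\int_s^t\mathbb{E}[|\hat{v}_{\eta(u)}|^{p/2}]\,du\le C_p|t-s|^{p/2}$, and likewise for $dB$. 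Dividing by $|t-s|^{p/2}$, the drift contribution is $O(|t-s|^{p/2})\le C_pT^{p/2}$ and the martingale contributions are $O(1)$, so the supremum over $0\le s<t\le T$ is finite, completing the proof.

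I do not expect a genuine obstacle here: the estimate is a routine BDG/H\"older argument once the moment control of $\hat{v}$ is in hand. The only points requiring care are that $\sqrt{\hat{v}_{\eta(\cdot)}}$ is legitimate precisely because every scheme in \eqref{first} and \eqref{helpsym} produces a nonnegative $\hat{v}$, and that all constants are uniform in $N$ because the moment bounds of Lemmas \ref{boundedscheme} and \ref{boundedscheme2} are; thus one simply invokes whichever of those two lemmas matches the chosen CIR discretization.
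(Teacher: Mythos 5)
Your proposal is correct and follows essentially the same route as the paper: reduce to $p\geq 2$ via Lyapunov, split $\hat{x}$ into drift and stochastic-integral parts, and apply H\"older and Burkholder--Davis--Gundy together with the uniform moment bounds for $\hat{v}$ from Lemmas \ref{boundedscheme} and \ref{boundedscheme2}; the increment estimate is then the same computation as in Lemma \ref{bounded-2}. The paper's proof is just a terser version of exactly this argument.
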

	\begin{proof}
		
			Again, let $p\geq 2$. The case $p \in [1,2)$ then follows by an application of the Lyapunov inequality.
			
		Using the results from Lemma \ref{boundedscheme} and Lemma \ref{boundedscheme2}, both statements follow  by standard computations.
	For the first term, we use  the H\"older and the Burkholder-Davis-Gundy inequality, which leads to
		\begin{align*}
			\mathbb{E}\left[  \sup_{t\in [0,T]} |\hat{x}_t|^p \right]&=\mathbb{E}\left[  \sup_{t\in [0,T]} \left|x_0+\mu t-\frac{1}{2}\int_{0}^{t}\hat{v}_{\eta(s)}ds+\rho\int_{0}^{t}\sqrt{\hat{v}_{\eta(s)}}dW_s+\sqrt{1-\rho^2}\int_{0}^{t}\sqrt{\hat{v}_{\eta(s)}}dB_s\right|^p \right]\\
			&\le C_p \left(1+\mathbb{E}\left[  \sup_{t\in [0,T]}\left|\int_{0}^{t}\hat{v}_{\eta(s)}ds\right|^p\right]\right.\\
			&\left.\qquad \qquad +\mathbb{E}\left[  \sup_{t\in [0,T]}\left|\int_{0}^{t}\sqrt{\hat{v}_{\eta(s)}}dW_s\right|^p\right]
			+\mathbb{E}\left[  \sup_{t\in [0,T]}\left|\int_{0}^{t}\sqrt{\hat{v}_{\eta(s)}}dB_s\right|^p \right]\right)\\
			&\le C_p.
		\end{align*}
		The statement for the second term follows by the first statement and by analogous computations as in the proof of Lemma \ref{bounded-2}.
	
	\end{proof}

	\section{CIR process: convergence rates}
	\label{sec:main}
	In this section, we will prove the announced $L^1$-convergence rates
	for the Euler schemes presented in the introduction. 
	
	\subsection{Error analysis - Case I and $\mathbf{\nu>1}$}
	We will first look at the discretization from \eqref{first}  under the condition $\nu>1$.
	\begin{theorem}\label{thm:one}
		Let $\bar{v}$ be given by \eqref{first} and $\nu>1$.
		Then, for all $\varepsilon >0 $ there exists a constant $C_{\varepsilon}>0$ such that
		\begin{displaymath}
			\sup\limits_{t\in[0,T]}\mathbb{E}\left[ \left|V_t-\bar{v}_t\right| \right] \leq C_{\varepsilon} \left (\Delta t\right)^{\frac{1}{2}-\varepsilon}.
		\end{displaymath}
	\end{theorem}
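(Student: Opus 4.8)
The plan is to study the error process $E_t := V_t - \bar{v}_t$, which satisfies $E_0 = 0$ and
$$dE_t = \kappa\bigl(f_2(\bar{v}_{\eta(t)}) - V_t\bigr)\,dt + \sigma\bigl(\sqrt{V_t} - \sqrt{f_3(\bar{v}_{\eta(t)})}\bigr)\,dW_t,$$
a continuous semimartingale driven by the \emph{same} Brownian motion, so that $d\langle E\rangle_t = \sigma^2(\sqrt{V_t}-\sqrt{f_3(\bar{v}_{\eta(t)})})^2\,dt$. Applying the Tanaka--Meyer formula to $|E_t|$ and taking expectations (the stochastic integral is a true martingale by the moment bounds of Lemmas \ref{bounded} and \ref{boundedscheme} together with Proposition \ref{prop:martingalemoments}, after a standard localization) gives
$$\mathbb{E}|E_t| = \kappa\,\mathbb{E}\int_0^t \sign(E_s)\bigl(f_2(\bar{v}_{\eta(s)}) - V_s\bigr)\,ds + \mathbb{E}\bigl[L_t^0(E)\bigr].$$
Everything then reduces to bounding these two terms by $C_\varepsilon(\Delta t)^{1/2-\varepsilon}$.

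For the drift I would write $f_2(\bar{v}_{\eta(s)}) - V_s = \bigl(f_2(\bar{v}_{\eta(s)}) - f_2(V_{\eta(s)})\bigr) + \bigl(V_{\eta(s)} - V_s\bigr)$, using that each admissible $f_2$ acts as the identity on the nonnegative values taken by $V$. Since $f_2$ is $1$-Lipschitz, the first bracket is bounded by $|E_{\eta(s)}|$, while Lemma \ref{bounded-2} controls $\mathbb{E}|V_{\eta(s)}-V_s| \le C(\Delta t)^{1/2}$; hence the drift contributes at most $\kappa\int_0^t \mathbb{E}|E_{\eta(s)}|\,ds + C(\Delta t)^{1/2}$. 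This is linear in $E$ and will be absorbed by a Gronwall argument at the end (alternatively one retains the dissipative term $-\kappa\int_0^t\mathbb{E}|E_s|\,ds$ produced by the decomposition $f_2(\bar{v}_{\eta(s)})-V_s=(f_2(\bar{v}_{\eta(s)})-\bar{v}_s)-E_s$).

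The main obstacle is the local-time term, which I would attack with Lemma \ref{lemmadeAngelis} applied to $Y=E$ for a cutoff level $\delta$ to be chosen as a power of $\Delta t$. The $4\delta$ term and the drift part of the $dE_s$ integral are harmless: on $\{E_s\ge\delta\}$ one has $e^{1-E_s/\delta}|E_s|\le C\delta$, and the stochastic part is again a martingale. The delicate term is the quadratic-variation integral
$$\frac{1}{\delta}\,\mathbb{E}\int_0^t \mathbbm{1}_{\{E_s>\delta\}}e^{1-E_s/\delta}\,\sigma^2\bigl(\sqrt{V_s}-\sqrt{f_3(\bar{v}_{\eta(s)})}\bigr)^2\,ds.$$
Here I would invoke Lemma \ref{squareroot} with $x=V_s$ (so that a \emph{negative} power of $V_s$ appears) to bound $(\sqrt{V_s}-\sqrt{f_3(\bar{v}_{\eta(s)})})^2 \le V_s^{-(1-\lambda)}|V_s-f_3(\bar{v}_{\eta(s)})|^{2-\lambda}$, and then split $|V_s - f_3(\bar{v}_{\eta(s)})|\le |E_s| + |\bar{v}_s - f_3(\bar{v}_{\eta(s)})|$ so as to keep the continuous error $E_s$ paired with the exponential cutoff. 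The $|E_s|^{2-\lambda}$ piece is controlled by the elementary estimate $\tfrac{1}{\delta}\mathbbm{1}_{\{E_s>\delta\}}e^{1-E_s/\delta}|E_s|^{2-\lambda}\le C_\lambda\,\delta^{1-\lambda}$ combined with $\sup_s\mathbb{E}[V_s^{-(1-\lambda)}]<\infty$, and it is exactly here that $\nu>1$ enters, via Lemma \ref{bounded}, allowing $\lambda$ arbitrarily close to $0$. The one-step remainder $|\bar{v}_s-f_3(\bar{v}_{\eta(s)})|$ is handled by the smoothness bound of Lemma \ref{boundedscheme} for $|\bar{v}_s-\bar{v}_{\eta(s)}|$ and by a small-negative-part estimate for $|\bar{v}_{\eta(s)}-f_3(\bar{v}_{\eta(s)})|\le C(\bar{v}_{\eta(s)})^-$ (again governed by $\nu>1$); after H\"older this piece is of order $\delta^{-1}(\Delta t)^{(2-\lambda)/2}$.

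Collecting everything yields, schematically,
$$\mathbb{E}|E_t| \le C_\lambda\Bigl(\delta^{1-\lambda} + \tfrac{1}{\delta}(\Delta t)^{(2-\lambda)/2} + (\Delta t)^{1/2}\Bigr) + \kappa\int_0^t\mathbb{E}|E_{\eta(s)}|\,ds,$$
and I would balance the first two terms by choosing $\delta\sim(\Delta t)^{1/2}$, giving order $(\Delta t)^{(1-\lambda)/2}$; letting $\lambda\downarrow 0$ (permissible precisely because $\nu>1$) produces the rate $(\Delta t)^{1/2-\varepsilon}$, and a Gronwall inequality removes the remaining $\int_0^t\mathbb{E}|E_{\eta(s)}|\,ds$. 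I expect the genuine difficulty to lie in the quadratic-variation term, namely reconciling the blow-up $1/\delta$ from Lemma \ref{lemmadeAngelis} with the smallness of the diffusion mismatch, and, as a necessary ingredient, in showing that the negative part $(\bar{v}_{\eta(s)})^-$ of the Case~I scheme is small under $\nu>1$; the exponent $\nu/2$ in Proposition \ref{prop:up-bound} is the visible trace of the case where the negative-moment constraint $1-\lambda<\nu$ forces $\lambda\ge 1-\nu>0$.
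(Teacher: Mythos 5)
Your strategy coincides with the paper's: Tanaka--Meyer for $|e_t|$, the local-time bound of Lemma \ref{lemmadeAngelis} with cutoff $\delta\sim(\Delta t)^{1/2}$, Lemma \ref{squareroot} to trade the square-root mismatch for a negative power of $V_s$ (admissible for exponents below $\nu$, which is where $\nu>1$ enters), and a final Gronwall step. Your pointwise bound $\frac{1}{\delta}\mathbbm{1}_{\{E_s>\delta\}}e^{1-E_s/\delta}|E_s|^{2-\lambda}\le C_\lambda\delta^{1-\lambda}$ is correct and in fact slightly cleaner than the paper's two-region split $E_s\in(\delta,\delta^{\alpha})$ versus $E_s\ge\delta^{\alpha}$; both yield the same rate. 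The drift decomposition through $f_2(V_{\eta(s)})=V_{\eta(s)}$ is a harmless variant of the paper's.

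There is, however, one genuine gap, created by your splitting $|V_s-f_3(\bar{v}_{\eta(s)})|\le|E_s|+|\bar{v}_s-f_3(\bar{v}_{\eta(s)})|$ in the quadratic-variation term: it leaves behind $|\bar{v}_{\eta(s)}-f_3(\bar{v}_{\eta(s)})|=c\,(\bar{v}_{\eta(s)})^-$, and you assert without proof that this negative part is "governed by $\nu>1$". No lemma available for the Case I schemes controls $\mathbb{E}\bigl[((\bar{v}_{\eta(s)})^-)^{2-\lambda}\bigr]$ (the analogue of Lemma \ref{Zzero} is proved only for SE and AE), and the naive route via $P(\bar{v}_{\eta(s)}<0)$ is too weak: after dividing by $\delta=(\Delta t)^{1/2}$ you would need roughly $\mathbb{E}\bigl[((\bar{v}_{\eta(s)})^-)^{2-\lambda}\bigr]\lesssim(\Delta t)^{1-\varepsilon}$, which a Cauchy--Schwarz bound only delivers if $P(\bar{v}_{\eta(s)}<0)\lesssim(\Delta t)^2$ --- not available for $\nu$ close to $1$. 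The paper avoids this term entirely: since $V_s\ge0$, the inequality $|x-f_3(y)|\le|x-y|$ for $x\ge0$, $y\in\mathbb{R}$ gives $|V_s-f_3(\bar{v}_{\eta(s)})|\le|V_s-\bar{v}_{\eta(s)}|\le|e_s|+|\bar{v}_s-\bar{v}_{\eta(s)}|$ directly, and the second summand is handled by Lemma \ref{boundedscheme} and H\"older exactly as you do for your remainder. Replace your splitting by this one and the argument closes without any estimate on the negative part of the scheme.
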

	\begin{proof}
		Define $e=(e_t)_{t \in [0,T]}$ by $e_t=V_t-\bar{v}_t$. 
		
		(i) The Tanaka-Meyer formula, see e.g.~Equation 7.9 in Chapter III in \cite{KS}, yields
		\begin{equation}\label{prooffirststep}
			\begin{aligned}
				\mathbb{E}\left[ \left|e_t\right|\right]  =& \mathbb{E}\left[ \int_{0}^{t}\sign(e_u)de_u\right]+\mathbb{E}\left[L^0_t(e)\right]\\
				=&\mathbb{E}\left[\int_{0}^{t}\sign(e_u)\left(-\kappa\left(V_u-f_2(\bar{v}_{\eta(u)})\right)\right)du\right] \\ & \quad + \mathbb{E}\left[\int_{0}^{t}\sign(e_u)\sigma\left(\sqrt{V_u}-\sqrt{f_3(\bar{v}_{\eta(u)})}\right)dW_u\right]
				+ \mathbb{E} \left[L^0_t(e) \right].
			\end{aligned} 
		\end{equation}
		We have
		\begin{displaymath}
			\mathbb{E}\left[\int_{0}^{t}\sign(e_u)\sigma\left(\sqrt{V_u}-\sqrt{f_3(\bar{v}_{\eta(u)})}\right)dW_u\right]=0
		\end{displaymath}
		due to Lemma \ref{bounded}, Lemma \ref{boundedscheme}, the properties of $f_3$ and the martingale property of the It\=o integral.
		Looking at the first term, we have
		\begin{displaymath}
			\begin{aligned}
				&\mathbb{E}\left[\int_{0}^{t}\sign(e_u)\left(-\kappa\left(V_u-f_2(\bar{v}_{\eta(u)})\right)\right)du\right] \\ &=- \kappa \, \mathbb{E}\left[\int_{0}^{t}\sign(e_u)\left(V_u-f_2(\bar{v}_{u}) \right)du\right]
			 -\kappa \,  \mathbb{E}\left[\int_{0}^{t}\sign(e_u)\left(f_2(\bar{v}_{u})-f_2(\bar{v}_{\eta(u)})\right)du\right]\\
				&  \leq  \kappa \int_{0}^{t} \mathbb{E}\left[ |e_u|\right]  du+ \kappa \int_{0}^{t} \mathbb{E}\left[ |\bar{v}_u-\bar{v}_{\eta(u)}|\right]  du \\
				&  \le C(\Delta t)^{\frac{1}{2}} +  \kappa \int_{0}^{t} \mathbb{E}\left[ |e_u| \right] du
			\end{aligned}
		\end{displaymath}
		due to Lemma \ref{boundedscheme} and $|x-f_2(y)| \leq |x-y|$ for $x \geq 0, y \in \mathbb{R}$ as well as $|f_2(x)-f_2(y)| \leq |x-y|$ for  $x,y \in \mathbb{R}$.
		Therefore, we obtain
		\begin{equation}\label{cir-tanaka-1}
			\sup_{u \in [0,t]} \mathbb{E}\left[ \left|e_u\right|\right]  \leq  C(\Delta t)^{\frac{1}{2}} +  \kappa \int_{0}^{t}  	\sup_{v \in [0,u]} \mathbb{E}\left[ |e_v|\right]  du + \mathbb{E}\left[L^0_t(e)\right].
		\end{equation}
		
		(ii)	With Lemma \ref{lemmadeAngelis} we can derive a bound for the expected local time in 0 of $e$. Let $\delta\in(0,1)$, then
		\begin{equation}\label{local}
			\begin{aligned}
				\mathbb{E}\left[L^0_t(e)\right]\le 4\delta&-2\mathbb{E}\left[\int_{0}^{t}\left(\mathbbm{1}_{\{e_s\in(0,\delta)\}}+\mathbbm{1}_{\{e_s \geq \delta\}}e^{1-\frac{e_s}{\delta}}\right)de_s\right]\\
				&+\frac{1}{\delta}\mathbb{E}\left[\int_{0}^{t}\mathbbm{1}_{\{e_s>\delta\}}e^{1-\frac{e_s}{\delta}}d\langle e\rangle_s\right].
			\end{aligned}
		\end{equation}
		We define $Y_s:=\mathbbm{1}_{\{e_s\in(0,\delta)\}}+\mathbbm{1}_{\{e_s  \geq \delta\}}e^{1-\frac{e_s}{\delta}}$ and look at the second term of \eqref{local}, i.e. at
		\begin{displaymath}
			\mathbb{E}\left[\int_{0}^{t}Y_sde_s\right]=- \kappa \,  \mathbb{E}\left[\int_{0}^{t}Y_s\left(V_s-f_2(\bar{v}_{\eta(s)})\right) ds\right],
		\end{displaymath}
		where we already used the martingale property of the It\=o integral. Since $0 \leq Y_s \leq 1$, we obtain proceeding as above  that 
		\begin{equation}
			\begin{aligned} \label{LT-3}
				\left| 	\mathbb{E}\left[\int_{0}^{t}Y_sde_s\right] \right| & \leq  \kappa	\int_{0}^{t} 	\mathbb{E}\left[  |V_s-\bar{v}_{\eta(s)}|\right]  ds  \\ & \leq 
				C(\Delta t)^{\frac{1}{2}} +  \kappa \int_{0}^{t} \mathbb{E}\left[ |e_u| \right] du.
			\end{aligned}
		\end{equation}
		The third term of \eqref{local} can be bounded as follows with  Lemma \ref{squareroot} (using $\lambda =0$) and the properties of $f_3$:
		\begin{displaymath}
			\begin{aligned}
				\frac{1}{\delta}\mathbb{E}\left[\int_{0}^{t}\mathbbm{1}_{\{e_s>\delta\}}e^{1-\frac{e_s}{\delta}}d\langle e\rangle_s\right]&=\frac{1}{\delta}\mathbb{E}\left[\int_{0}^{t}\mathbbm{1}_{\{e_s>\delta\}}e^{1-\frac{e_s}{\delta}}\sigma^2\left(\sqrt{V_s}-\sqrt{f_3(\bar{v}_{\eta(s)}})\right)^2ds\right]\\
				&\le \frac{1}{\delta}\mathbb{E}\left[\int_{0}^{t}\mathbbm{1}_{\{e_s>\delta\}}e^{1-\frac{e_s}{\delta}}\sigma^2 \frac{\left|V_s-f_3(\bar{v}_{\eta(s)})\right|^{2}}{V_s}ds\right] \\
				&\le \frac{1}{\delta}\mathbb{E}\left[\int_{0}^{t}\mathbbm{1}_{\{e_s>\delta\}}e^{1-\frac{e_s}{\delta}}\sigma^2 \frac{\left|V_s-\bar{v}_{\eta(s)}\right|^{2}}{V_s}ds\right].
			\end{aligned}
		\end{displaymath}
		With Lemma \ref{boundedscheme}, Lemma \ref{bounded}, the Minkowski inequality and H\"older's inequality it follows
		\begin{equation} \label{LT-2}
			\begin{aligned}
				\frac{1}{\delta}\mathbb{E}\left[\int_{0}^{t}\mathbbm{1}_{\{e_s>\delta\}}e^{1-\frac{e_s}{\delta}}d\langle e\rangle_s\right]\le& \frac{C}{\delta}\mathbb{E}\left[\int_{0}^{t}\mathbbm{1}_{\{e_s>\delta\}}e^{1-\frac{e_s}{\delta}}\ \frac{\left|V_s-\bar{v}_{s}\right|^{2}}{{V_s}}ds\right]\\
				&+\frac{C}{\delta}\mathbb{E}\left[\int_{0}^{t}\mathbbm{1}_{\{e_s>\delta\}}e^{1-\frac{e_s}{\delta}} \frac{\left|\bar{v}_s-\bar{v}_{\eta(s)}\right|^{2}}{V_s}ds\right]\\
				\le&\frac{C}{\delta}\mathbb{E}\left[\int_{0}^{t}\mathbbm{1}_{\{e_s>\delta\}}e^{1-\frac{e_s}{\delta}}\ \frac{\left|e_s\right|^{2}}{V_s}ds\right]\\
				&+C \frac{\Delta t}{\delta} \int_{0}^{t} \left( \mathbb{E}\left[\frac{1}{V_s^{(1+\nu)/2}}\right] \right)^{\frac{2}{1+\nu}}ds		
				\\
				\le&\frac{C}{\delta}\mathbb{E}\left[\int_{0}^{t}\mathbbm{1}_{\{e_s>\delta\}}e^{1-\frac{e_s}{\delta}} \frac{\left|e_s\right|^{2}}{V_s}ds\right]\\
				&+C \left(\frac{\Delta t}{\delta}\right).
			\end{aligned}
		\end{equation}
		Now let $\alpha\in(0,1)$. Since
		\begin{displaymath}
			\sup_{s \in [0,T]} \mathbb{E}\left[ |e_s|^p\right]  \leq C_p
		\end{displaymath}
		for all $p \geq 1$ due to Lemma \ref{bounded} and Lemma \ref{boundedscheme}, we have that
		\begin{equation}
			\begin{aligned}
				& \frac{1}{\delta}\mathbb{E}\left[\int_{0}^{t}\mathbbm{1}_{\{e_s>\delta\}}e^{1-\frac{e_s}{\delta}} \frac{\left|e_s\right|^{2}}{V_s}ds\right] \\ & \quad =\frac{1}{\delta}\mathbb{E}\left[\int_{0}^{t}\mathbbm{1}_{\{e_s\in(\delta,\delta^{\alpha})\}}e^{1-\frac{e_s}{\delta}} \frac{\left|e_s\right|^{2}}{V_s}ds\right] \\ & \qquad \quad  +\frac{1}{\delta}\mathbb{E}\left[\int_{0}^{t}\mathbbm{1}_{\{e_s\ge\delta^{\alpha}\}}e^{1-\frac{e_s}{\delta}} \frac{\left|e_s\right|^2}{V_s}ds\right]\\
				& \quad \le \delta^{2\alpha-1}\int_{0}^{t}\mathbb{E}\left[\frac{1}{V_s}\right]ds+C \frac{e^{-\delta^{\alpha-1}}}{\delta}\int_{0}^{t} \left( \mathbb{E}\left[\frac{1}{V_s^{(1+\nu)/2}}\right] \right)^{\frac{2}{1+\nu}}ds\\
				& \quad \le C_{\alpha}\delta^{2\alpha-1} \label{LT-1}
			\end{aligned}
		\end{equation}
		by another application of H\"older's inequality and $ \limsup_{\delta \rightarrow 0} \frac{e^{-\delta^{\alpha-1}}}{\delta^{2\alpha}} =0$.
		
		Summarizing \eqref{local} -- \eqref{LT-1} we have shown that
		\begin{equation} \label{LT-0}
			\mathbb{E}\left[L^0_t(e)\right] \leq 4 \delta+	C(\Delta t)^{\frac{1}{2}} +  \kappa \int_{0}^{t} \mathbb{E}\left[ |e_u|\right]  du +  C_{\alpha} \delta^{2\alpha-1}+ C \frac{\Delta t}{\delta}.
		\end{equation}
		(iii) Setting $\delta = (\Delta t)^{1/2}$ gives
		\begin{equation} \begin{aligned}
				\mathbb{E}\left[L^0_t(e)\right] & \leq    \kappa \int_{0}^{t} \mathbb{E}\left[ |e_u| \right] du +  C_{\alpha} (\Delta t)^{\alpha-1/2} \\ & \leq    \kappa \int_{0}^{t} \sup_{v \in [0,u]}\mathbb{E}\left[   |e_v|\right]  du 
				+  C_{\alpha} (\Delta t)^{\alpha-1/2} . \end{aligned}
		\end{equation}	 
		Combining this with \eqref{cir-tanaka-1} yields
		\begin{displaymath}
			\sup_{u \in [0,t]}	\mathbb{E}\left[ \left|e_u\right|\right]  \leq   C_{\alpha} (\Delta t)^{\alpha-1/2} +  2 \kappa \int_{0}^{t} \sup_{v \in [0,u]}\mathbb{E}\left[   |e_v|\right]  du 
		\end{displaymath}
		and the assertion follows  by choosing $\alpha=1-\varepsilon$ and an application of Gronwall's lemma.
		
	\end{proof}

	\subsection{Error analysis - Case II and $\mathbf{\nu >1}$}
	
	In this section, we will analyze both schemes from \eqref{helpsym} under the condition $\nu>1$.
	
	\begin{theorem}\label{thm:two}
		Let $\hat{v}$ be given by \eqref{helpsym} and $\nu>1$.
		Then, for all $\varepsilon >0$ there exists a constant $C_{\varepsilon}>0$ such that
		\begin{displaymath}
			\sup\limits_{t\in[0,T]}\mathbb{E}\left[ \left|V_t-\hat{v}_t\right|\right] \leq  C_{\varepsilon} \left( \Delta t\right)^{\frac{1}{2}-\varepsilon}.
		\end{displaymath}
	\end{theorem}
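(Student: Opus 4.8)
The plan is to mirror the proof of Theorem \ref{thm:one}, replacing the clean semimartingale decomposition of the error with the decomposition \eqref{helpsym}, which carries extra non-martingale terms (a local-time increment of $z^\star$ and two indicator-weighted integrals over $\{z^\star_s \le 0\}$). The key observation is that these extra terms are exactly the ones controlled by Lemma \ref{Rem-SE-AE}, so they will contribute an error of order $(\Delta t/\varepsilon)^{\nu(1-\varepsilon)/(1+\beta)}$ rather than $(\Delta t)^{1/2}$ — and since $\nu > 1$, by choosing $\beta$ and $\varepsilon$ small enough this exponent exceeds $1/2-\epsilon$ and is harmless.

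Concretely, I would set $e_t = V_t - \hat v_t$ and apply the Tanaka–Meyer formula to $|e_t|$, as in step (i) of Theorem \ref{thm:one}. Writing $de_t$ via the SDE for $V$ minus the decomposition \eqref{helpsym}, the drift and diffusion pieces $\kappa(\theta - \hat v^\star_{\eta(s)})$ and $\sigma \sqrt{\hat v^\star_{\eta(s)}}$ are handled verbatim as before: the $\mathrm{sign}(e_u)$-weighted Itô integral vanishes by Lemma \ref{bounded} and Lemma \ref{boundedscheme2}, and the drift contributes $C(\Delta t)^{1/2} + \kappa\int_0^t \sup_{v\le u}\mathbb{E}[|e_v|]\,du$ after splitting $\hat v^\star_{\eta(s)}$ into $\hat v^\star_s$ plus an increment controlled by the H\"older regularity in Lemma \ref{boundedscheme2}. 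The new ingredients are the three correction terms in \eqref{helpsym}: the two integrands $\mathbbm{1}_{\{z^\star_s \le 0\}}\kappa(\theta - \hat v^\star_{\eta(s)})$ and $\mathbbm{1}_{\{z^\star_s \le 0\}}\sigma\sqrt{\hat v^\star_{\eta(s)}}$, and the local-time increment $dL^0_s(z^\star)$. After multiplying by the bounded weight $\mathrm{sign}(e_s)$ (for the main term) or by $Y_s \in [0,1]$ (for the local-time-of-$e$ estimate), each falls directly under the hypotheses of Lemma \ref{Rem-SE-AE} with $g$ bounded and $h$ of linear growth, yielding a bound of order $(\Delta t/\varepsilon)^{\nu(1-\varepsilon)/(1+\beta)}$.

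For the expected local time $\mathbb{E}[L^0_t(e)]$ I would again invoke Lemma \ref{lemmadeAngelis} and reproduce steps (ii)–(iii) of Theorem \ref{thm:one} essentially unchanged. The quadratic-variation term is identical because $d\langle e\rangle_s = \sigma^2(\sqrt{V_s} - \sqrt{\hat v^\star_{\eta(s)}})^2\,ds$ has the same form, so Lemma \ref{squareroot} (with $\lambda=0$), the inverse-moment bounds of Lemma \ref{bounded}, and the H\"older/Minkowski estimates deliver the same $4\delta + C(\Delta t)^{1/2} + \kappa\int_0^t\sup\mathbb{E}[|e|]\,du + C_\alpha\delta^{2\alpha-1} + C\Delta t/\delta$ structure; the only addition is one more application of Lemma \ref{Rem-SE-AE} to absorb the correction terms appearing when we expand $de_s$ inside the $Y_s$-weighted integral. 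Setting $\delta = (\Delta t)^{1/2}$ and $\alpha = 1-\varepsilon$ as before, all genuinely new contributions are dominated by $(\Delta t)^{1/2-\varepsilon}$ once we fix $\beta$ small enough that $\nu(1-\varepsilon)/(1+\beta) > 1/2$, which is possible precisely because $\nu > 1$.

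\textbf{The main obstacle} is bookkeeping the $\varepsilon$-dependence coherently: Lemma \ref{Zzero}, Lemma \ref{boundlocal} and Lemma \ref{Rem-SE-AE} all carry their own small parameter $\varepsilon \in (0,1/2]$ and an auxiliary $\beta>0$, while the Tanaka argument introduces its own $\alpha$ (hence the target exponent $1/2-\varepsilon$). I would need to verify that a single choice of parameters makes every correction term decay strictly faster than $(\Delta t)^{1/2-\varepsilon}$ — this is where the strict inequality $\nu>1$ is used, since it buys room in the exponent $\nu(1-\varepsilon)/(1+\beta)$ to clear the threshold $1/2$. Once that is checked, a final application of Gronwall's lemma closes the estimate exactly as in Theorem \ref{thm:one}.
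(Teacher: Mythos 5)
Your overall strategy is exactly the paper's: Tanaka--Meyer for $|e_t|$, Lemma \ref{lemmadeAngelis} for the local time of $e$, Lemma \ref{Rem-SE-AE} to absorb the indicator and local-time corrections from \eqref{helpsym}, and Gronwall to close. However, there is one genuine error. You claim that the quadratic-variation term is ``identical'' to Case I, i.e.\ that $d\langle e\rangle_s = \sigma^2\bigl(\sqrt{V_s}-\sqrt{\hat v^{\star}_{\eta(s)}}\bigr)^2\,ds$. This is false: the decomposition \eqref{helpsym} contains the \emph{additional martingale} term $2c^{\star}\sigma\int \mathbbm{1}_{\{z_s^{\star}\le 0\}}\sqrt{\hat v^{\star}_{\eta(s)}}\,dW_s$, which enters the martingale part of $e$ and hence its bracket. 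The correct expression is
\begin{displaymath}
\langle e\rangle_t = \sigma^2\int_0^t \Bigl(\sqrt{V_s}-\sqrt{\hat v^{\star}_{\eta(s)}} + 2c^{\star}\mathbbm{1}_{\{z_s^{\star}\le 0\}}\sqrt{\hat v^{\star}_{\eta(s)}}\Bigr)^2 ds ,
\end{displaymath}
and after expanding the square the extra piece contributes, via Lemma \ref{Rem-SE-AE} (with $h$ of linear growth, since the indicator squares to itself), a term of order $(\Delta t)^{\nu\frac{1-\varepsilon}{1+\varepsilon}}/\delta = (\Delta t)^{\nu\frac{1-\varepsilon}{1+\varepsilon}-\frac12}$ after setting $\delta=(\Delta t)^{1/2}$. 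This is not a cosmetic omission: it is precisely the term the paper isolates in Remark \ref{prop:one-proof-a}'s follow-up remark as the obstruction to extending Case II to $\nu\le 1$, and it is where $\nu>1$ is genuinely binding --- the exponent must satisfy $\nu\frac{1-\varepsilon}{1+\varepsilon}-\frac12 \ge \frac12-\epsilon$, i.e.\ roughly $\nu\ge 1$, whereas the drift and local-time corrections you do account for only require roughly $\nu>\frac12$. So your stated reason for needing $\nu>1$ (clearing the threshold $\frac12$ in the corrections to $de_s$) misidentifies the critical step.

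The gap is repairable with the tools you already cite: write $(a+b)^2\le 2a^2+2b^2$, treat the $a^2$ part exactly as in Theorem \ref{thm:one}, and feed the $b^2$ part into Lemma \ref{Rem-SE-AE}; the resulting $(\Delta t)^{\frac{1-\varepsilon}{1+\varepsilon}-\frac12}$ is dominated by the $(\Delta t)^{\alpha-\frac12}$ term for suitable parameter choices, which is how the paper concludes. But as written, your argument skips the one computation that distinguishes Case II from Case I in the local-time estimate, so you should supply it explicitly.
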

	\begin{proof}
		The proof is very similar to the proof of Theorem \ref{thm:one}. Differences are only due to the additional terms in the expansion of the schemes and we will give the required additional steps in the following.  For $e_t=V_t-\hat{v}_t$ we have
		\begin{equation}\label{helpsym-error}
			\begin{aligned}
				e_t=& - \int_{0}^{t} \kappa\left(V_s-\hat{v}_{\eta(s)}^{\star}\right)ds+\sigma\int_{0}^{t} \left( \sqrt{V_s} -\sqrt{\hat{v}_{\eta(s)}^{\star}} \right) dW_s\\
				&+2c^{\star}\sigma\int_{0}^{t}\mathbbm{1}_{\{z_s^{\star} \le 0\}} \sqrt{\hat{v}_{\eta(s)}^{\star}}dW_s+2c^{\star}\int_{0}^{t}\mathbbm{1}_{\{z_s^{\star} \le 0\}} \kappa\left(\theta-\hat{v}_{\eta(s)}^{\star}\right)ds\\
				&-c^{\star} L_t^0(z^{\star}).
			\end{aligned}
		\end{equation}

		(i) The Tanaka-Meyer formula yields
		\begin{displaymath}
			\begin{aligned}
				\mathbb{E}\left[ \left|e_t\right|\right]  =& \mathbb{E}\left[ \int_{0}^{t}\sign(e_u)de_u\right]+\mathbb{E}\left[L^0_t(e)\right]\\
				=& -\kappa \mathbb{E}\left[\int_{0}^{t}\sign(e_u)(V_u-\hat{v}^{\star}_{\eta(u)})du\right] \\ & +\mathbb{E}\left[\sigma \int_{0}^{t}\sign(e_u)\left(\sqrt{V_u}-\sqrt{\hat{v}^{\star}_{\eta(u))}}\right)dW_u\right]
				+ \mathbb{E} \left[L^0_t(e) \right]
				\\ & 	+	\mathbb{E}\left[2c^{\star}\sigma\int_{0}^{t}\sign(e_u)\mathbbm{1}_{\{z_u^{\star} \le 0\}} \sqrt{\hat{v}_{\eta(u)}^{\star}}dW_u\right]
				\\ &   	+ 	\mathbb{E}\left[2c^{\star}\int_{0}^{t}\sign(e_u)\mathbbm{1}_{\{z_u^{\star} \le 0\}} \kappa\left(\theta-\hat{v}_{\eta(u)}^{\star}\right)du-c^{\star}\int_{0}^{t}\sign(e_u)dL^0_u(z^{\star})\right].
			\end{aligned} 
		\end{displaymath}	 
		However,  Lemma \ref{boundedscheme2} and the martingale property of It\=o integrals imply
		\begin{displaymath}
			\mathbb{E}\left[2c^{\star}\sigma\int_{0}^{t}\sign(e_u)\mathbbm{1}_{\{z_u^{\star} \le 0\}} \sqrt{\hat{v}_{\eta(u)}^{\star}}dW_u\right]=0
		\end{displaymath}
		and Lemma \ref{Rem-SE-AE} gives 
		\begin{displaymath}
			\left| 	\mathbb{E}\left[\int_{0}^{t}\sign(e_u) \left(  \mathbbm{1}_{\{z_u^{\star} \le 0\}} 2 \kappa(\theta-\hat{v}_{\eta(u)}^{\star})du-dL^0_u(z^{\star}) \right) \right]	\right| \leq C_{\varepsilon}\left(\Delta t\right)^{\nu\frac{1-\varepsilon}{1+\varepsilon}}
		\end{displaymath}
	by choosing $\beta=\varepsilon$.
		Thus, we have
		\begin{equation}  \label{cir-tanaka-2}
			\sup_{u \in [0,t]}	\mathbb{E}\left[ \left|e_u\right|\right]  \leq  C(\Delta t)^{\frac{1}{2}} + \kappa \int_{0}^{t}  \sup_{v \in [0,u]}\mathbb{E}\left[ |e_v|\right]  du + \mathbb{E}\left[L^0_t(e)\right]
		\end{equation}
		as in the first step of the previous proof by choosing $\varepsilon$ appropriately and exploiting that $\nu>1$.
		
		(ii) In the same way Lemma \ref{Rem-SE-AE} and Lemma \ref{boundedscheme2} also yield 
		\begin{displaymath}
			\begin{aligned} 
				\left| 	\mathbb{E}\left[\int_{0}^{t}Y_sde_s\right] \right| & \leq 
				C(\Delta t)^{\frac{1}{2}} + \kappa \int_{0}^{t} \mathbb{E}\left[ |e_u|\right]  du.
			\end{aligned}
		\end{displaymath}
		Finally, we have 
		\begin{displaymath}
			\begin{aligned}
				\langle e \rangle_t = \sigma^2 \int_0^t \left( \sqrt{V_s} - \sqrt{\hat{v}^{\star}_{\eta(s)}} +  2c^{\star} \mathbbm{1}_{\{z_s^{\star} \le 0\}} \sqrt{\hat{v}_{\eta(s)}^{\star}} \right)^2 ds.
			\end{aligned}
		\end{displaymath}
	But again Lemma \ref{Rem-SE-AE} with $\beta=\varepsilon$ gives that
		\begin{displaymath} 
			\begin{aligned}
				\frac{1}{\delta}\mathbb{E}\left[\int_{0}^{t}\mathbbm{1}_{\{e_s>\delta\}}e^{1-\frac{e_s}{\delta}}d\langle e\rangle_s\right]\le&
				\frac{2}{\delta}\mathbb{E}\left[\int_{0}^{t}\mathbbm{1}_{\{e_s>\delta\}}e^{1-\frac{e_s}{\delta}}
				\left( \sqrt{V}_s - \sqrt{\hat{v}_{\eta(s)}}\right)^2 \right] ds + C_{\varepsilon} \frac{(\Delta t)^{\nu \frac{1-\varepsilon}{1+\varepsilon}}}{\delta}	
			\end{aligned}
		\end{displaymath}
		and proceeding as in the previous proof we obtain that
		\begin{displaymath}
			\begin{aligned}
				\mathbb{E}\left[L^0_t(e)\right] \leq 4 \delta+	C(\Delta t)^{\frac{1}{2}} & +  \kappa \int_{0}^{t} \mathbb{E}\left[ |e_u|\right]  du +  C_{\alpha} \delta^{2\alpha-1} +C \frac{\Delta t}{\delta} +  C_{\varepsilon} \frac{(\Delta t)^{\nu \frac{1-\varepsilon}{1+\varepsilon}}}{\delta}	.
			\end{aligned}
		\end{displaymath} 
		(iii) Setting $\delta = (\Delta t)^{1/2}$ and using $\nu>1$ now yield
		\begin{displaymath}
			\mathbb{E}\left[L^0_t(e)\right] \leq    \kappa \int_{0}^{t} \mathbb{E}\left[ |e_u| \right] du +  C_{\alpha} (\Delta t)^{\alpha-1/2} + C_{\varepsilon}(\Delta t)^{\frac{1-\varepsilon}{1+\varepsilon}-1/2} .
		\end{displaymath}	 
		Combining this with \eqref{cir-tanaka-2} gives
		\begin{displaymath}
			\sup_{u \in [0,t]}	\mathbb{E}\left[ \left|e_u\right|\right]  \leq    C_{\alpha} (\Delta t)^{\alpha-1/2} + C_{\varepsilon}(\Delta t)^{\frac{1-\varepsilon}{1+\varepsilon}-1/2} +  2 \kappa \int_{0}^{t} \sup_{v \in [0,u]}\mathbb{E}\left[   |e_v|\right]  du. 
		\end{displaymath}
		Choosing $\alpha=1-\varepsilon$ and 
		observing that $\frac{1}{2} - \varepsilon  \geq \frac{1-\varepsilon}{1+\varepsilon}-\frac{1}{2} $ for all $\varepsilon \in (0,1)$
		an application of Gronwall's lemma yields then
		\begin{displaymath}
			\sup_{u \in [0,T]}	\mathbb{E}\left[ \left|e_u\right|\right]  \leq     C_{\varepsilon}(\Delta t)^{\frac{1-\varepsilon}{1+\varepsilon}-1/2} 
		\end{displaymath}
		and the assertion follows by choosing $\varepsilon$ appropriately.
		
	\end{proof}

	\subsection{Error analysis - Case I and $\mathbf{\nu \leq 1}$}
	Now we will study again  the discretization from \eqref{first}   but under the condition $\nu\leq 1$.
	\begin{proposition}\label{prop:one-proof-a}
		Let $\bar{v}$ be given by \eqref{first} and $\nu \leq 1$.
		Then, for all $\varepsilon >0 $ there exists a constant $C_{\varepsilon}>0$ such that
		\begin{displaymath}
			\sup\limits_{t\in[0,T]}\mathbb{E}\left[ \left|V_t-\bar{v}_t\right| \right] \leq C_{\varepsilon} \left (\Delta t\right)^{\frac{\nu}{2}-\varepsilon}.
		\end{displaymath}
	\end{proposition}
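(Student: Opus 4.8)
The plan is to follow the proof of Theorem~\ref{thm:one} almost verbatim, changing only the treatment of the quadratic-variation contribution to the expected local time, which is precisely the step where the condition $\nu>1$ was used. Setting $e_t=V_t-\bar{v}_t$ and applying the Tanaka--Meyer formula, the It\^o integral term again vanishes by the martingale property (via Lemma~\ref{bounded}, Lemma~\ref{boundedscheme} and the properties of $f_3$), and the drift term is bounded exactly as in \eqref{cir-tanaka-1} by $C(\Delta t)^{1/2}+\kappa\int_0^t\sup_{v\in[0,u]}\mathbb{E}[|e_v|]\,du$; note that $(\Delta t)^{1/2}\le(\Delta t)^{\nu/2}$ since $\nu\le1$, so this term is harmless for the target rate. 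It thus remains to bound $\mathbb{E}[L^0_t(e)]$ via Lemma~\ref{lemmadeAngelis}, where the first two terms ($4\delta$ and the $Y_s\,de_s$ term) are handled precisely as in \eqref{LT-3}.

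The crux is the third term $\frac1\delta\mathbb{E}[\int_0^t\mathbbm{1}_{\{e_s>\delta\}}e^{1-e_s/\delta}\,d\langle e\rangle_s]$. Here I would replace the choice $\lambda=0$ in Lemma~\ref{squareroot} (which produced the factor $1/V_s$, integrable only for $\nu>1$) by a parameter $\lambda\in(1-\nu,1)$, yielding
$$\big(\sqrt{V_s}-\sqrt{f_3(\bar{v}_{\eta(s)})}\big)^2\le V_s^{-(1-\lambda)}\,\big|V_s-\bar{v}_{\eta(s)}\big|^{2-\lambda}.$$
Since $\lambda>1-\nu$, the negative moment $\mathbb{E}[V_s^{-(1-\lambda)}]$ is finite and bounded uniformly in $s$ by Lemma~\ref{bounded}. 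Writing $|V_s-\bar{v}_{\eta(s)}|\le|e_s|+|\bar{v}_s-\bar{v}_{\eta(s)}|$ and using $(a+b)^q\le 2^{q-1}(a^q+b^q)$ with $q=2-\lambda\ge1$ together with H\"older's inequality splits this term into an increment part and an error part.

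For the increment part, Lemma~\ref{boundedscheme} gives $\mathbb{E}[|\bar{v}_s-\bar{v}_{\eta(s)}|^{(2-\lambda)p}]^{1/p}\le C(\Delta t)^{(2-\lambda)/2}$, and pairing this with $V_s^{-(1-\lambda)}$ in H\"older (legitimate for $p$ close to $1$ because $1-\lambda<\nu$) yields a contribution of order $(\Delta t)^{(2-\lambda)/2}/\delta$. For the error part I would split $\{e_s>\delta\}$ into $\{e_s\in(\delta,\delta^\alpha)\}$ and $\{e_s\ge\delta^\alpha\}$ for some $\alpha\in(0,1)$, exactly as in \eqref{LT-1}: on the first region $|e_s|^{2-\lambda}\le\delta^{\alpha(2-\lambda)}$ gives a contribution of order $\delta^{\alpha(2-\lambda)-1}$, while on the second the factor $e^{1-\delta^{\alpha-1}}$ decays faster than any power of $\delta$ (using $\sup_s\mathbb{E}[|e_s|^p]\le C_p$, valid for every $p\ge1$, and finiteness of the $V$-moment) and is negligible. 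Collecting everything,
$$\mathbb{E}[L^0_t(e)]\le 4\delta+C(\Delta t)^{1/2}+\kappa\int_0^t\sup_{v\in[0,u]}\mathbb{E}[|e_v|]\,du+C_\alpha\,\delta^{\alpha(2-\lambda)-1}+C\,\frac{(\Delta t)^{(2-\lambda)/2}}{\delta}.$$

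Finally I would set $\delta=(\Delta t)^{1/2}$ and optimise the free parameters. The last two exponents become $\tfrac12\big(\alpha(2-\lambda)-1\big)$ and $\tfrac12(1-\lambda)$, both of which converge to $\nu/2$ as $\alpha\uparrow1$ and $\lambda\downarrow1-\nu$. Combining with the drift estimate and absorbing the term $\kappa\int_0^t\sup_{v\in[0,u]}\mathbb{E}[|e_v|]\,du$ by Gronwall's lemma then gives $\sup_{t\in[0,T]}\mathbb{E}[|e_t|]\le C_\varepsilon(\Delta t)^{\nu/2-\varepsilon}$. \emph{The main obstacle} is exactly this balancing: the constraint $\lambda>1-\nu$, forced by integrability of $\mathbb{E}[V_s^{-(1-\lambda)}]$ in Lemma~\ref{bounded}, is what caps the attainable exponent at $\nu/2$ rather than $1/2$, and it is essential that the two competing terms $\delta^{\alpha(2-\lambda)-1}$ and $(\Delta t)^{(2-\lambda)/2}/\delta$ balance precisely at $\delta=(\Delta t)^{1/2}$; the deficits $\alpha<1$ and $\lambda>1-\nu$ account for the arbitrarily small $\varepsilon$.
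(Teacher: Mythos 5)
Your proposal is correct and follows essentially the same route as the paper: the paper also reuses the Tanaka--Meyer/local-time argument of Theorem \ref{thm:one} verbatim and only changes the quadratic-variation step, applying Lemma \ref{squareroot} with $\lambda=1-\nu(1-\zeta)$ (i.e.\ your $\lambda\downarrow 1-\nu$), the same $\delta^{\alpha}$-splitting, and $\delta=(\Delta t)^{1/2}$, arriving at the exponent $(\nu(1-\zeta)^2-\zeta)/2\to\nu/2$. One small correction: in the H\"older pairing of $V_s^{-(1-\lambda)}$ with the increment, the exponent near $1$ must sit on the \emph{negative-moment} factor (so that $(1-\lambda)\cdot\frac{p}{p-1}<\nu$ requires $p$ large, not $p$ close to $1$ as you wrote); this is only a labelling slip, since Lemma \ref{boundedscheme} bounds the increment in every $L^q$.
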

	\begin{proof}
		Define again $e=(e_t)_{t \in [0,T]}$ by $e_t=V_t-\bar{v}_t$.
		
		(i) Proceeding as in the proof of Theorem \ref{thm:one} we obtain
		\begin{equation}\label{cir-tanaka-3}
			\sup_{u \in [0,t]} \mathbb{E}\left[ \left|e_u\right|\right]  \leq  C(\Delta t)^{\frac{1}{2}} +  \kappa \int_{0}^{t}  	\sup_{v \in [0,u]} \mathbb{E}\left[ |e_v|\right]  du + \mathbb{E}\left[L^0_t(e)\right]
		\end{equation}
		and
		\begin{equation}\label{local-3}
			\begin{aligned}
				\mathbb{E}\left[L^0_t(e)\right]\le 4\delta& + 	C(\Delta t)^{\frac{1}{2}} +  \kappa \int_{0}^{t} \mathbb{E}\left[ |e_u| \right] du
				+\frac{1}{\delta}\mathbb{E}\left[\int_{0}^{t}\mathbbm{1}_{\{e_s>\delta\}}e^{1-\frac{e_s}{\delta}}d\langle e\rangle_s\right]
			\end{aligned}
		\end{equation}
		with 
		\begin{equation*}
			\langle e \rangle_t = \sigma^2 \int_0^t \left( \sqrt{V_s} - \sqrt{f_3(\bar{v}_{\eta(s)})}\right)^2 ds.
		\end{equation*}
		(ii)  For the remaining term in  \eqref{local-3} we apply Lemma \ref{squareroot} with $\lambda=1-\nu(1-\zeta)$ for  $\zeta \in (0,1)$  and Lemma \ref{bounded}, Lemma \ref{boundedscheme}, H\"older's and Minkowski's inequality  as well as the properties of $f_3$ to obtain
		\begin{equation} 
			\begin{aligned}
				& 	\frac{1}{\delta}\mathbb{E}\left[\int_{0}^{t}\mathbbm{1}_{\{e_s>\delta\}}e^{1-\frac{e_s}{\delta}}d\langle e\rangle_s\right] \\ & \quad \le \frac{C_{\zeta}}{\delta}\mathbb{E}\left[\int_{0}^{t}\mathbbm{1}_{\{e_s>\delta\}}e^{1-\frac{e_s}{\delta}}\ \frac{\left|V_s-\bar{v}_{s}\right|^{1+\nu(1-\zeta)}}{{V_s}^{\nu(1-\zeta)}}ds\right]\\
				& \qquad +\frac{C_{\zeta}}{\delta}\mathbb{E}\left[\int_{0}^{t}\mathbbm{1}_{\{e_s>\delta\}}e^{1-\frac{e_s}{\delta}} \frac{\left|\bar{v}_s-\bar{v}_{\eta(s)}\right|^{1+\nu(1-\zeta)}}{V_s^{\nu(1-\zeta)}}ds\right]\\
				& \quad \le \frac{C_{\zeta}}{\delta}\mathbb{E}\left[\int_{0}^{t}\mathbbm{1}_{\{e_s>\delta\}}e^{1-\frac{e_s}{\delta}}\ \frac{\left|e_s\right|^{1+\nu(1-\zeta)}}{V_s^{\nu(1-\zeta)}}ds\right]\\
				& \qquad +C_{\zeta} \frac{\left(\Delta t\right)^{ (1+\nu(1-\zeta))/2}}{\delta}  \label{LT-2-b} \int_{0}^{t} \left( \mathbb{E}\left[\frac{1}{V_s^{\nu(1-\zeta^2)}}\right] \right)^{\frac{1}{1+\zeta}}ds \\
				& \quad \leq \frac{C_{\zeta}}{\delta}\mathbb{E}\left[\int_{0}^{t}\mathbbm{1}_{\{e_s>\delta\}}e^{1-\frac{e_s}{\delta}} \frac{\left|e_s\right|^{1+\nu(1-\zeta)}}{V_s^{\nu(1-\zeta)}}ds\right]\\
				& \qquad +C_{\zeta}\left(\frac{\left(\Delta t\right)^{ (1+\nu(1-\zeta))/2}}{\delta}\right). 
			\end{aligned}
		\end{equation}
		Now let again $\alpha\in(0,1)$. Since
		\begin{displaymath}
			\sup_{s \in [0,T]} \mathbb{E}\left[ |e_s|^p\right]  \leq C_p
		\end{displaymath}
		for all $p \geq 1$ due to Lemma \ref{bounded} and Lemma \ref{boundedscheme}, we have that
		\begin{equation*}
			\begin{aligned}
				& \frac{1}{\delta}\mathbb{E}\left[\int_{0}^{t}\mathbbm{1}_{\{e_s>\delta\}}e^{1-\frac{e_s}{\delta}} \frac{\left|e_s\right|^{1+\nu(1-\zeta)}}{V_s^{\nu(1-\zeta)}}ds\right] \\ & \quad =\frac{1}{\delta}\mathbb{E}\left[\int_{0}^{t}\mathbbm{1}_{\{e_s\in(\delta,\delta^{\alpha})\}}e^{1-\frac{e_s}{\delta}} \frac{\left|e_s\right|^{1+\nu(1-\zeta)}}{V_s^{\nu(1-\zeta)}}ds\right] \\ & \qquad \quad  +\frac{1}{\delta}\mathbb{E}\left[\int_{0}^{t}\mathbbm{1}_{\{e_s\ge\delta^{\alpha}\}}e^{1-\frac{e_s}{\delta}} \frac{\left|e_s\right|^{{1+\nu(1-\zeta)}}}{V_s^{\nu(1-\zeta)}}ds\right]\\
				& \quad \le \delta^{({1+\nu(1-\zeta)})\alpha-1}\int_{0}^{t}\mathbb{E}\left[\frac{1}{V_s^{\nu(1-\zeta)}}\right]ds \\ & \qquad  \quad +C \frac{e^{-\delta^{\alpha-1}}}{\delta}\int_{0}^{t} \left( \mathbb{E}\left[\frac{1}{V_s^{\nu(1-\zeta^2)}}\right] \right)^{\frac{1}{1+\zeta}}ds\\
				& \quad \le C_{\zeta,\alpha}\delta^{(1+\nu(1-\zeta))\alpha-1} 
			\end{aligned}
		\end{equation*}
		by another application of H\"older's inequality, Lemma \ref{bounded} and $ \limsup_{\delta \rightarrow 0} \frac{e^{-\delta^{\alpha-1}}}{\delta^{2\alpha}} =0$.
		
		Summarizing the previous steps we have shown that
		\begin{equation*} \begin{aligned} 
				\mathbb{E}\left[L^0_t(e)\right] \leq 4 \delta & +C_{\zeta}\frac{\left(\Delta t\right)^{ (1+\nu(1-\zeta))/2}}{\delta}  +   C_{\zeta,\alpha}\delta^{(1+\nu(1-\zeta))\alpha-1} \\ & +   \kappa \int_{0}^{t} \mathbb{E}\left[ |e_u|\right]  du +  C (\Delta t)^{1/2}.
			\end{aligned}
		\end{equation*}
		Setting $\delta = (\Delta t)^{1/2}$ and $\alpha=1-\zeta$ gives
		\begin{displaymath}
			\mathbb{E}\left[L^0_t(e)\right] \leq    \kappa \int_{0}^{t} \sup_{v \in [0,u]}\mathbb{E}\left[ |e_v| \right] du + C_{\zeta}\left(\Delta t\right)^{ \nu(1-\zeta)/2} +
			C_{\zeta} (\Delta t)^{( \nu(1-\zeta)^2 -\zeta)/ 2}.
		\end{displaymath}	 
		Combining this with \eqref{cir-tanaka-3} yields
		\begin{displaymath}
			\sup_{u \in [0,t]}	\mathbb{E}\left[ \left|e_u\right|\right]  \leq    C_{\zeta} (\Delta t)^{( \nu(1-\zeta)^2 -\zeta)/ 2} +  2 \kappa \int_{0}^{t} \sup_{v \in [0,u]}\mathbb{E}\left[   |e_v|\right]  du 
		\end{displaymath}
		and the assertion follows  by choosing $\zeta$ sufficiently small and an application of Gronwall's lemma.
		
	\end{proof}

	\begin{rmk}
		We are not able to establish the analogous result to Proposition \ref{prop:one-proof-a} for Case II of the Euler schemes, since we have in that case
		\begin{equation*}
		\frac{1}{\delta}	\langle e \rangle_t = 	\frac{\sigma^2}{\delta}  \int_0^t \left( \sqrt{V_s} - \sqrt{\hat{v}^{\star}_{\eta(s)}} +  2c^{\star} \mathbbm{1}_{\{z_s^{\star} \le 0\}} \sqrt{\hat{v}_{\eta(s)}^{\star}} \right)^2 ds
		\end{equation*}
	instead of
	\begin{equation*}
		\frac{1}{\delta}	\langle e \rangle_t =  	\frac{\sigma^2}{\delta} \int_0^t \left( \sqrt{V_s} - \sqrt{f_3(\bar{v}_{\eta(s)})}\right)^2 ds.
	\end{equation*}
	The additional term gives a contribution of order $ 	\frac{1}{\delta} \Delta t^{\nu \frac{1-\varepsilon}{1+\varepsilon} }$, which will lead to a worse error bound than the one given in Proposition \ref{prop:one-proof-a}. 
	\end{rmk}

	\subsection{Summary}
	The error bounds in Theorem \ref{thm:up-bound} and Proposition \ref{prop:up-bound}  for the CIR process follow now from Theorem \ref{thm:one}, Theorem \ref{thm:two}, Proposition \ref{prop:one-proof-a}
	and  $|x-f_3(y)| \leq |x-y|$ for $x \geq 0$, $y\in \mathbb{R}$. 
	
	\medskip
	
	\section{Log-Heston model: convergence rates}
	\label{sec:heston}
	In this section, we will in particular show that the results from Theorem \ref{thm:one}, Theorem \ref{thm:two}  and Proposition \ref{prop:one-proof-a}  carry over to a discretization of the log-Heston model, where the log-price process is discretized with the Euler scheme \eqref{eulerscheme}, that is
	\begin{equation}
		\begin{aligned} \label{heston-euler-2}
			\hat{x}_{t}=\hat{x}_{\eta(t)}&+\left(\mu-\frac{1}{2}\hat{v}_{\eta(t)}\right)(t-\eta(t))+\sqrt{\hat{v}_{\eta(t)}}\left(U_t-U_{\eta(t)}\right)
		\end{aligned}
	\end{equation}
	with 
	\begin{displaymath}
		U_t=\rho W_t + \sqrt{1-\rho^2} B_t, \qquad  t \in [0,T].
	\end{displaymath}
	Recall that our first set of Euler schemes is given by 
	\begin{equation}\label{first-2}
		\begin{aligned}
			\bar{v}_t&=\bar{v}_{\eta(t)}+\int_{\eta(t)}^{t}\kappa(\theta - f_2(\bar{v}_{\eta(s)})) ds + \sigma \int_{\eta(t)}^{t}\sqrt{f_3(\bar{v}_{\eta(s)})}dW_s,\\
			\hat{v}_t &= f_3\left(\bar{v}_t\right),
		\end{aligned}
	\end{equation}where
	\begin{equation} \label{cir-choices1-2} 
		\begin{aligned}
		  f_2 \in \{ {\tt id,  abs, sym} \}, \qquad 
			f_3 \in \{ {\tt abs, sym} \},
		\end{aligned}
	\end{equation}
	while SE and AE can be expressed as
	\begin{equation}\label{helpsym-2}
		\begin{aligned}
			\hat{v}^{\star}_t=&\hat{v}_{\eta(t)}^{\star} +\int_{\eta(t)}^{t} \kappa\left(\theta-\hat{v}_{\eta(s)}^{\star}\right)ds+\sigma\int_{\eta(t)}^{t}\sqrt{\hat{v}_{\eta(s)}^{\star}}dW_s\\
			&-2c^{\star}\sigma\int_{\eta(t)}^{t}\mathbbm{1}_{\{z_s^{\star} \le 0\}} \sqrt{\hat{v}_{\eta(s)}^{\star}}dW_s-2c^{\star}\int_{\eta(t)}^{t}\mathbbm{1}_{\{z_s^{\star} \le 0\}} \kappa\left(\theta-\hat{v}_{\eta(s)}^{\star}\right)ds\\
			&+c^{\star}\left( L_t^0(z^{\star})-L_{\eta(t)}^0(z^{\star})\right)
		\end{aligned}
	\end{equation}
	where $c^{sym}=1$ and $c^{abs}=\frac{1}{2}$.

	\subsection{Error analysis - Case I}

	The key ingredient here and also for the second case is the observation that two continuous martingales $M=(M_t)_{t \in [0,T]}$
	and $\tilde{M}=(\tilde{M}_t)_{t \in [0,T]}$, whose quadratic variation coincides, have
	equivalent moments for their supremum norm.
	This directly follows from Proposition
	\ref{prop:martingalemoments}.

	\begin{theorem}\label{thm:three}
		Let $(\hat{x},\hat{v})$ be given by \eqref{heston-euler-2}, \eqref{first-2} and \eqref{cir-choices1-2}. Then, for all $\epsilon >0$ there exists a constant $C_{\epsilon}>0$  such that
		\begin{displaymath}
			\mathbb{E}\left[ 	\sup\limits_{t\in[0,T]} \left|X_t-\hat{x}_t\right|\right]  \leq C_{\epsilon} \left(\Delta t\right)^{\frac{\min\{1,\nu\}}{2}-\epsilon}.
		\end{displaymath}
	\end{theorem}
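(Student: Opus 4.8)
The plan is to split the log-price error into a drift part and a martingale part, and to control the supremum of the martingale part by transferring it—via the stated equivalence of moments for martingales with identical quadratic variation—to the martingale that already appears in the CIR error. Since $U_t=\rho W_t+\sqrt{1-\rho^2}B_t$ is itself a Brownian motion, the scheme \eqref{heston-euler-2} gives
\[
X_t-\hat{x}_t=-\tfrac12\int_0^t\bigl(V_s-\hat{v}_{\eta(s)}\bigr)\,ds+M_t,\qquad M_t:=\int_0^t\bigl(\sqrt{V_s}-\sqrt{\hat{v}_{\eta(s)}}\bigr)\,dU_s,
\]
so that $\mathbb{E}[\sup_{t}|X_t-\hat{x}_t|]\le \tfrac12\int_0^T\mathbb{E}[|V_s-\hat{v}_{\eta(s)}|]\,ds+\mathbb{E}[\sup_{t}|M_t|]$. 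For the drift term I would use $|V_s-\hat{v}_{\eta(s)}|\le|V_s-\hat v_s|+|\hat v_s-\hat v_{\eta(s)}|\le|V_s-\bar v_s|+|\bar v_s-\bar v_{\eta(s)}|$ (recall $\hat v=f_3(\bar v)$, that $V_s\ge0$, and that $f_3$ is $1$-Lipschitz), so that Theorem \ref{thm:one} / Proposition \ref{prop:one-proof-a} together with the increment bound of Lemma \ref{boundedscheme} control it by $C_\epsilon(\Delta t)^{\min\{1,\nu\}/2-\epsilon}$. The whole difficulty is therefore to bound $\mathbb{E}[\sup_t|M_t|]$ at the same rate.

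The crucial observation is that $M$ itself cannot be bounded directly, because we have no a priori $L^p$-control of the log-price error at the terminal time; instead I would compare $M$ with the martingale part of the CIR error \eqref{first},
\[
N_t:=\sigma\int_0^t\bigl(\sqrt{V_s}-\sqrt{\hat{v}_{\eta(s)}}\bigr)\,dW_s,\qquad\text{with}\qquad \langle M\rangle_t=\sigma^{-2}\langle N\rangle_t .
\]
Proposition \ref{prop:martingalemoments} with $\alpha=1/2$ then gives $\mathbb{E}[\sup_{t}|M_t|]\le \tfrac{K_{1/2}}{\sigma k_{1/2}}\,\mathbb{E}[\sup_{t}|N_t|]$. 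Since $N_t=(V_t-\bar v_t)+\kappa\int_0^t(V_s-f_2(\bar v_{\eta(s)}))\,ds$, the terminal value $N_T$ is controlled by the CIR error, and for any $p\in(1,2)$ Doob's inequality (Proposition \ref{prop:doob}) yields
\[
\mathbb{E}\Bigl[\sup_{t\in[0,T]}|N_t|\Bigr]\le\Bigl(\mathbb{E}\Bigl[\sup_{t\in[0,T]}|N_t|^p\Bigr]\Bigr)^{1/p}\le\frac{p}{p-1}\,\bigl\|N_T\bigr\|_{L^p}.
\]

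It remains to estimate $\|N_T\|_{L^p}$ for $p$ close to $1$. By Minkowski's inequality $\|N_T\|_{L^p}\le\|V_T-\bar v_T\|_{L^p}+\kappa\int_0^T\|V_s-f_2(\bar v_{\eta(s)})\|_{L^p}\,ds$, and each summand reduces (as above) to $\|V_s-\bar v_s\|_{L^p}$ plus the increment $\|\bar v_s-\bar v_{\eta(s)}\|_{L^p}\le C(\Delta t)^{1/2}$ from Lemma \ref{boundedscheme}. The key step is to upgrade the $L^1$-rate to an $L^p$-rate by interpolation: using $\sup_s\mathbb{E}|V_s-\bar v_s|\le C_\epsilon(\Delta t)^{\min\{1,\nu\}/2-\epsilon}$ (Theorem \ref{thm:one}, Proposition \ref{prop:one-proof-a}) and $\sup_s\mathbb{E}|V_s-\bar v_s|^r\le C_r$ for every $r\ge1$ (Lemma \ref{bounded}, Lemma \ref{boundedscheme}), the Lyapunov inequality with $\tfrac1p=(1-\theta)+\tfrac{\theta}{r}$ gives
\[
\|V_s-\bar v_s\|_{L^p}\le\|V_s-\bar v_s\|_{L^1}^{1-\theta}\,\|V_s-\bar v_s\|_{L^r}^{\theta}\le C_{\epsilon,r}\,(\Delta t)^{(\min\{1,\nu\}/2-\epsilon)(1-\theta)} .
\]
For fixed large $r$ one has $\theta\downarrow0$ as $p\downarrow1$, so the exponent $(\min\{1,\nu\}/2-\epsilon)(1-\theta)$ can be pushed arbitrarily close to $\min\{1,\nu\}/2$; since the prefactor $\tfrac{p}{p-1}$ does not depend on $\Delta t$, choosing $p$ near $1$ and $\epsilon$ small yields $\mathbb{E}[\sup_t|M_t|]\le C_{\epsilon'}(\Delta t)^{\min\{1,\nu\}/2-\epsilon'}$ for any prescribed $\epsilon'>0$. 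Combined with the drift estimate this proves the claim.

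I expect the genuine obstacle to be exactly the point that the naive estimate $\mathbb{E}[\langle M\rangle_T^{1/2}]\le(\mathbb{E}\langle M\rangle_T)^{1/2}$ only delivers half the desired rate, because $\mathbb{E}\langle M\rangle_T$ is governed by the $L^2$-error of the CIR scheme, for which no rate better than $\min\{1,\nu\}/4$ seems available. Circumventing this by (i) using the equivalence of martingale moments to pass from $M$ to $N$, (ii) applying Doob's inequality to $N$—whose terminal value, unlike that of $M$, we control through the CIR error—and (iii) interpolating the sharp $L^1$-rate into $L^{1+}$ at a cost that vanishes as $p\downarrow1$, is the heart of the argument; getting the exponent–prefactor trade-off in (iii) right is where the care is needed.
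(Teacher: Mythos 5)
Your proposal is correct and follows essentially the same route as the paper's proof: the same drift/martingale decomposition, the same key step of using Proposition \ref{prop:martingalemoments} to transfer $\mathbb{E}[\sup_t|M_t|]$ to the $W$-driven martingale whose terminal value is read off from the CIR equation, and the same Doob-plus-interpolation device to upgrade the $L^1$-rate to $L^p$ with $p\downarrow 1$ at negligible cost in the exponent. The only (immaterial) difference is that you apply the H\"older interpolation to the components of $N_T$ after Minkowski, whereas the paper interpolates $\mathbb{E}[|\tilde M_T|^{1+\beta}]$ against $\mathbb{E}[|\tilde M_T|]$ directly.
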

	\begin{proof}
		(i) Without loss of generality, we can assume $\mu=0$. We have to analyze
		\begin{displaymath}
			\mathbb{E}\left[ 	\sup\limits_{t\in[0,T]} \left|X_t-\hat{x}_t\right|\right] =\mathbb{E}\left[ 	\sup\limits_{t\in[0,T]} \left|\frac{1}{2}\int_{0}^{t}\left(\hat{v}_{\eta(s)}-V_s\right)ds+\int_{0}^{t}\left(\sqrt{V_s}-\sqrt{\hat{v}_{\eta(s)}}\right)d U_s\right|\right].
		\end{displaymath} Using Theorem \ref{thm:one}, Proposition \ref{prop:one-proof-a} and Lemma \ref{boundedscheme}, we obtain
		\begin{align}
				\mathbb{E}\left[ 	\sup\limits_{t\in[0,T]} \left|X_t-\hat{x}_t\right|\right]  
				&\le \frac{1}{2}\int_{0}^{T}\mathbb{E}\left[  \left|V_s-\hat{v}_{s}\right|\right] ds+\frac{1}{2}\int_{0}^{T}\mathbb{E}\left[ \left|\hat{v}_s-\hat{v}_{\eta(s)}\right|\right] ds+\mathbb{E}\left[ 	\sup\limits_{t\in[0,T]}\left|M_t\right|\right]  \nonumber \\
				&\le C_{\varepsilon} \left(\Delta t\right)^{\frac{\min\{1,\nu\}}{2}-\varepsilon} +\mathbb{E}\left[ 	\sup\limits_{t\in[0,T]}\left|M_t\right|\right] , \label{heston1}
			\end{align}
	
		where 
		\begin{displaymath}
			M_t = \int_{0}^{t}\left(\sqrt{V_s}-\sqrt{\hat{v}_{\eta(s)}}\right)dU_s.
		\end{displaymath}
		
		(ii) Let
		\begin{displaymath}
			\tilde{M}_t=
			\int_{0}^{t}\left(\sqrt{V_s}-\sqrt{\hat{v}_{\eta(s)}}\right)dW_s, \qquad t \in [0,T].
		\end{displaymath}
		Clearly, we have
		$$  \langle M \rangle_t = \langle \tilde{M} \rangle_t, \qquad  t \in [0,T], $$ and so Proposition 	
		\ref{prop:martingalemoments} yields
		\begin{equation} \label{mom-eq} 
		\mathbb{E}\left[ 	\sup\limits_{t\in[0,T]}\left|M_t\right|\right]
				\leq K_{1/2} \,  \mathbb{E}	\left[   \langle M \rangle_T^{\frac{1}{2}} \right] = K_{1/2} \,  \mathbb{E}	\left[   \langle \tilde{M} \rangle_T^{\frac{1}{2}} \right] 
			\leq \frac{K_{1/2}}{k_{1/2}} \,  	\mathbb{E}\left[ 	\sup\limits_{t\in[0,T]} |\tilde{M}_t|\right]. \end{equation}	
		Now, the Lyapunov inequality and an application of Doob's maximal inequality, i.e. Proposition \ref{prop:doob}, give
			\begin{equation} \label{mom-eq} 
			\mathbb{E}\left[ 	\sup\limits_{t\in[0,T]}\left|M_t\right|\right]
			\leq \frac{K_{1/2} (1+\beta)}{k_{1/2} \beta} \,  \left(	\mathbb{E}\left[  |\tilde{M}_T|^{1+\beta}\right] \right)^{1/(1+\beta)} \end{equation} for $\beta >0$.	
		
		Using   \eqref{first-2} and the SDE for the CIR process we have 
		\begin{equation*}
			\tilde{M}_T=
			\frac{1}{\sigma}\left(V_T-\overline{v}_T+\kappa\int_{0}^{T}\left(V_s-f_2(\overline{v}_{\eta(s)})\right)ds\right).
		\end{equation*} 
	Thus,	we obtain 
		\begin{equation}
			\begin{aligned} \label{log-heston_final}
				\mathbb{E}\left[ |\tilde{M}_T|\right] &\le\frac{1}{\sigma}\left(\mathbb{E}\left[ \left|V_T-\overline{v}_T\right|\right] +\kappa\int_{0}^{T}\mathbb{E}\left[ \left|V_s-\overline{v}_s\right|\right] ds+\kappa\int_{0}^{T}\mathbb{E}\left[ \left|\overline{v}_s-\overline{v}_{\eta(s)}\right|\right] ds\right)\\
				&\le C_{\varepsilon}\left(\Delta t\right)^{\frac{\min\{1,\nu\}}{2}-\varepsilon},
			\end{aligned}
		\end{equation} 
		where we used Theorem \ref{thm:one}, Proposition \ref{prop:one-proof-a}, Lemma \ref{boundedscheme} and the properties of $f_2$.
		Moreover, for all $p \geq 1$ there exists a constant $C_p >0$ such that
		\begin{displaymath}
			\mathbb{E}\left[ |\tilde{M}_T|^p\right]  \leq C_p
		\end{displaymath}
		due to  Lemma \ref{boundedscheme} and Lemma \ref{bounded}.
		Thus, a standard application of H\"older's inequality yields
		\begin{displaymath}
			\mathbb{E}\left[ |\tilde{M}_T|^{1+\beta}\right]  \leq  C_{\beta}  \,  \left(  \mathbb{E} \left[ |\tilde{M}_T| \right ] \right)^{\frac{1}{1+\beta}},
		\end{displaymath}
		which in turn together with \eqref{mom-eq} and  \eqref{log-heston_final} gives 
		\begin{equation} \label{heston-M-finale}
			\mathbb{E}\left[ 	\sup\limits_{t\in[0,T]}\left|M_t\right|\right] \leq C_{\beta,\varepsilon}   \left(\Delta t\right)^{ \left( \frac{ \min\{1,\nu\}}{2} -\varepsilon \right) \frac{1}{(1+\beta)^2}}.
		\end{equation} 
		
		(iii)  The assertion follows now from \eqref{heston1} and  \eqref{heston-M-finale}
		by choosing $\varepsilon$ and $\beta$ sufficiently small.
	\end{proof}

	\subsection{Error analysis - Case II}
	
	The second case can be treated analogously for $\nu>1$, except  at one point.
	Here the martingale $\tilde{M}$ is given by
	\begin{displaymath}
		\begin{aligned}
			\tilde{M}_t&= \int_{0}^{t}\left(\sqrt{V_s}-\sqrt{\hat{v}^{\star}_{\eta(s)}}\right)dW_s\\
			&=\frac{1}{\sigma}\left(V_t-\hat{v}_t^{\star}+\kappa\int_{0}^{t}\left(V_s-\hat{v}^{\star}_{\eta(s)}\right)ds\right)+2c^{\star}\int_{0}^{t}\mathbbm{1}_{\{z_s^{\star} \le 0\}} \sqrt{v_{\eta(s)}^{\star}}dW_s\\
			&\quad + \frac{2c^{\star}}{\sigma}\int_{0}^{t}\mathbbm{1}_{\{z_s^{\star} \le 0\}} \kappa\left(\theta-v^{\star}_{\eta(s)}\right)ds- \frac{c^{\star}}{\sigma} L^0_t(z^{\star}), \qquad t \in [0,T].
		\end{aligned}
	\end{displaymath}
	However, the additional  terms can be treated with Lemma  \ref{boundlocal} and Lemma \ref{Rem-SE-AE} and are (at least) of order $(\Delta t)^{\frac{1}{2}-\varepsilon}$.  Using Theorem \ref{thm:two} and Lemma \ref{boundedscheme2} instead of Theorem \ref{thm:one} and Lemma \ref{boundedscheme} and proceeding as in Case I we obtain 
	\begin{displaymath}
		\mathbb{E}\left[ |\tilde{M}_T| \right] \leq C_{\varepsilon} (\Delta)^{\frac{1}{2}-\varepsilon}.
	\end{displaymath}
	
	Therefore we also have the following result:
	\begin{theorem}\label{thm:four} 
		Let $\nu>1$ and $(\hat{x},\hat{v})$ be given by \eqref{heston-euler-2} and \eqref{helpsym-2}. Then, for all $\epsilon >0$ there exists a constant $C_{\epsilon}>0$  such that
		\begin{displaymath}
			\mathbb{E}\left[ 	\sup\limits_{t\in[0,T]} \left|X_t-\hat{x}_t\right|\right] \leq C_{\epsilon} \left(\Delta t\right)^{\frac{1}{2}-\epsilon}.
		\end{displaymath}
	\end{theorem}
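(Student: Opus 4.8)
The plan is to follow the proof of Theorem~\ref{thm:three} almost verbatim, isolating the single point at which Case~II differs, namely the representation of the auxiliary martingale. Writing $\hat{v}=\hat{v}^\star$ and assuming without loss of generality that $\mu=0$ (the linear drift cancels in $X_t-\hat{x}_t$), I would start from the decomposition
\[
X_t-\hat{x}_t=\frac{1}{2}\int_0^t\bigl(\hat{v}_{\eta(s)}^\star-V_s\bigr)\,ds+M_t,\qquad M_t=\int_0^t\Bigl(\sqrt{V_s}-\sqrt{\hat{v}_{\eta(s)}^\star}\Bigr)\,dU_s.
\]
The Lebesgue part is split into $|V_s-\hat{v}_s^\star|+|\hat{v}_s^\star-\hat{v}_{\eta(s)}^\star|$ and bounded, using Theorem~\ref{thm:two} (which requires $\nu>1$) together with the time-regularity estimate of Lemma~\ref{boundedscheme2}, by a term of order $(\Delta t)^{1/2-\varepsilon}$. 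Hence everything reduces to estimating $\mathbb{E}[\sup_{t}|M_t|]$.

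The bridge, exactly as in Case~I, is the martingale $\tilde{M}_t=\int_0^t(\sqrt{V_s}-\sqrt{\hat{v}_{\eta(s)}^\star})\,dW_s$ driven by the same Brownian motion as the CIR process. Since $U$ and $W$ are both standard Brownian motions, $\langle M\rangle=\langle\tilde{M}\rangle$, so Proposition~\ref{prop:martingalemoments} shows that $M$ and $\tilde{M}$ have equivalent $L^1$-norms of their suprema; combining this with Lyapunov's inequality and Doob's inequality (Proposition~\ref{prop:doob}) gives $\mathbb{E}[\sup_t|M_t|]\le C\,(\mathbb{E}[|\tilde{M}_T|^{1+\beta}])^{1/(1+\beta)}$ for any $\beta>0$. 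The point of passing to $\tilde{M}$ is that, subtracting the scheme~\eqref{helpsym-2} from the CIR SDE~\eqref{CIR} and solving for the stochastic integral, one obtains
\[
\tilde{M}_T=\frac{1}{\sigma}\Bigl(V_T-\hat{v}_T^\star+\kappa\int_0^T(V_s-\hat{v}_{\eta(s)}^\star)\,ds\Bigr)+R_T,
\]
where $R_T$ collects the additional Case~II contributions.

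The only genuinely new work is to show $\mathbb{E}[|\tilde{M}_T|]\le C_\varepsilon(\Delta t)^{1/2-\varepsilon}$. The two ``principal'' terms are treated as in Case~I: $\mathbb{E}[|V_T-\hat{v}_T^\star|]$ and $\kappa\int_0^T\mathbb{E}[|V_s-\hat{v}_{\eta(s)}^\star|]\,ds$ are of order $(\Delta t)^{1/2-\varepsilon}$ by Theorem~\ref{thm:two} and Lemma~\ref{boundedscheme2}. The remainder $R_T$ consists of a Lebesgue integral of $\mathbbm{1}_{\{z_s^\star\le0\}}\kappa(\theta-\hat{v}_{\eta(s)}^\star)$, a multiple of the local time $L^0_T(z^\star)$, and an It\=o integral of $\mathbbm{1}_{\{z_s^\star\le0\}}\sqrt{\hat{v}_{\eta(s)}^\star}$ against $W$. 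The first two are bounded directly by Lemma~\ref{Rem-SE-AE} (with $g\equiv1$), and the local-time term alternatively by Lemma~\ref{boundlocal}, each of order $(\Delta t/\varepsilon)^{\nu(1-\varepsilon)/(1+\beta)}$; the stochastic term, which is \emph{not} covered by Lemma~\ref{Rem-SE-AE}, is handled by the Burkholder-Davis-Gundy inequality combined with the estimate $P(z_s^\star\le0)\le c(\Delta t/\varepsilon)^{\nu(1-\varepsilon)}$ of Lemma~\ref{Zzero} and the moment bound of Lemma~\ref{boundedscheme2}, giving order $(\Delta t/\varepsilon)^{\nu(1-\varepsilon)/(2(1+\beta))}$. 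Because $\nu>1$, all three exponents exceed $1/2-\varepsilon$ for $\varepsilon,\beta$ small, so $R_T$ is negligible.

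To finish, I would combine the $L^1$-bound on $\tilde{M}_T$ with the uniform moment bound $\mathbb{E}[|\tilde{M}_T|^p]\le C_p$ (from Burkholder-Davis-Gundy, Lemma~\ref{bounded} and Lemma~\ref{boundedscheme2}) via H\"older interpolation to upgrade it to the required control of $\mathbb{E}[|\tilde{M}_T|^{1+\beta}]$, feed this into the Doob estimate of the second paragraph, and finally let $\varepsilon,\beta\downarrow0$ to recover the exponent $\tfrac{1}{2}-\epsilon$. The main obstacle is precisely the remainder $R_T$, and within it the stochastic indicator term; it is here that $\nu>1$ is indispensable, which is also the reason (cf.\ the earlier remark) the argument does not extend to $\nu\le1$.
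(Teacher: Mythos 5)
Your proposal is correct and follows essentially the same route as the paper: reduce to Case~I via the martingale $\tilde{M}$, bound the extra Case~II terms in its representation, and conclude with the BDG/Doob/H\"older interpolation chain. In fact you are slightly more explicit than the paper at the one delicate point, namely the It\=o integral of $\mathbbm{1}_{\{z_s^{\star}\le 0\}}\sqrt{\hat{v}^{\star}_{\eta(s)}}$, which the paper subsumes under ``treated with Lemma \ref{boundlocal} and Lemma \ref{Rem-SE-AE}'' but which, as you note, requires an additional Burkholder--Davis--Gundy step before those lemmas (or Lemma \ref{Zzero}) apply.
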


	\subsection{Summary}
	The error bounds in Theorem \ref{thm:up-bound} and Proposition \ref{prop:up-bound}  for the log-Heston process follow now from Theorem \ref{thm:three} and Theorem \ref{thm:four}.
	
	\medskip

	\subsection{Proof of Proposition 1.3} 
	Recall that $\Delta t = \frac{T}{N}$. The Lipschitz continuity of $G$ implies that
	$$  \left| \mathbb{E} \left[   G \left(X^{pc,N}\right) \right]-   \mathbb{E} \left[  G \left(\hat{x}^{pc,N}\right) \right] \right|  \leq L_G \,
	\mathbb{E} \left[ \sup_{t \in [0,T]} |X_t -\hat{x}_t| \right],$$
	where we have set $X^{pc,N}_t= X_{\eta(t)}$ with $\eta(t)=\max\{k\in \{0,...,N\}:t_k\le t \} \Delta t$.
	Now Theorem  \ref{thm:three} and  Theorem \ref{thm:four}, respectively,  yield that 
	$$  \lim_{N \rightarrow \infty} \,\,   N^{1/2-\epsilon} \,  \big| \mathbb{E} \left[   G \left(X^{pc,N}\right) \right]-   \mathbb{E} \left[  G\left(\hat{x}^{pc,N} \right) \right] \big| =0$$ for $\nu>1$.
	Moreover, the Lipschitz continuity of $G$ also implies that
	$$  \big| \mathbb{E} \left[   G(X) \right]-   \mathbb{E} \left[  G \left(X^{pc,N}\right) \right] \big|  \leq L_G \, \mathbb{E} \left[ \sup_{t \in [0,T]} |X_t -X_{\eta(t)}| \right].$$
Finally, recall that due to Lemma $\ref{bounded-2}$ we have that
		$$ 	\sup_{0 \leq s< t \leq  T } \mathbb{E} \left[ \frac{|X_t-X_s|^p}{|t-s|^{p/2}} \right]<\infty $$
	for all $p \geq 1$. Thus, the classical Garsia-Rodemich-Rumsey Lemma, see e.g. Equation A.10 with $m=\gamma(\frac{1}{2}-\varepsilon)$ and $\gamma > \frac{1}{\varepsilon}$ in Appendix A.3  in \cite{Nualart}, implies that
	$$ 	 \mathbb{E} \left[ \sup_{0 \leq s< t \leq  T }  \frac{|X_t-X_s|}{ \, |t-s|^{\frac{1}{2}-\varepsilon}} \right]<\infty $$
for all $\varepsilon >0$.
	Therefore, we obtain
	$$  \lim_{N \rightarrow \infty} \,\,   N^{1/2-\epsilon} \, \big| \mathbb{E} \left[   G(X) \right]-   \mathbb{E} \left[  G \left(X^{pc,N}\right) \right] \big| = 0,$$
	which concludes the proof of  Proposition 1.3.
	
	\medskip
	
	\section{Simulation study}
	\label{sec:simulation}
	In this section, we will test numerically our results from Theorem \ref{thm:up-bound} and Proposition \ref{prop:up-bound} for some sets of exemplary parameters. We will perform the tests for all schemes from Table \ref{tab:Euler}. First, we will describe the design of the numerical experiments. We would like to estimate the rate of the decay of the errors
	\begin{displaymath}
		\begin{aligned}
			e_v(N) &= \mathbb{E}\left[ \left|V_T-\hat{v}_{t_N}^{(N)}\right|\right] , \qquad
			e_x(N) = \mathbb{E}\left[ \left|X_T-\hat{x}_{t_N}^{(N)}\right|\right] ,
		\end{aligned}
	\end{displaymath} 
	for the numerical scheme $\hat{v}^{(N)}, \hat{x}^{(N)}$ with step size $\Delta t =\frac{T}{N}$. Since we cannot compute these quantities exactly, we approximate their decay, see e.g. \cite{AA1}, by calculating
	\begin{displaymath}
		\begin{aligned}
			{\tt err}_v(N)&=\frac{1}{M}\sum_{i=1}^{M}\left|\left(\hat{v}_{t_N}^{(N)}-\hat{v}_{t_{2N}}^{(2N)}\right)^{(i)}\right|, \qquad
			{\tt err}_x(N)=\frac{1}{M}\sum_{i=1}^{M}\left|\left(\hat{x}_{t_N}^{(N)}-\hat{x}_{t_{2N}}^{(2N)}\right)^{(i)}\right|,
		\end{aligned}
	\end{displaymath} 
	where $M$ is the number of Monte Carlo repetitions and $(\hat{v}_{t_N}^{(N)}-\hat{v}_{t_{2N}}^{(2N)})^{(i)}, i=1,...,M$, are iid copies of $\hat{v}_{t_N}^{(N)}-\hat{v}_{t_{2N}}^{(2N)}$.  The same holds for $(\hat{x}_{t_N}^{(N)}-\hat{x}_{t_{2N}}^{(2N)})^{(i)}, i=1,...,M$. In our simulations, we chose $M=10^5$ and $N\in\{2^1,...,2^{15}\}$. To cover a wide range of different Feller indices, we will perform numerical simulations with five different parameter sets. We always choose $T=1$ and $s_0=100$. The other parameters can be found in Table \ref{tab:parameters}.
	\begin{table}[tbhp]
		{\footnotesize
			\caption{Parameters}\label{tab:parameters}
			\begin{center}
				\begin{tabular}{|c|c|c|c|c|c|c|c|} \hline
					Model & $v_0$ & $\kappa$ & $\theta$ & $\sigma$ & $\rho$ & $\mu$ & $\nu$\\ \hline
					1 & 0.04 & 5 & 0.04 & 0.61 & -0.7 & 0.0319 & 1.075\\
					2 & 0.0457 & 5.07 & 0.0457 & 0.48 & -0.767 & 0 & 2.0113 \\ 
					3 & 0.04 & 2.6 & 0.04 & 0.2 & -0.6 & 0 & 5.2\\
					4 & 0.010201 & 6.21 & 0.019 & 0.61 & -0.7 & 0.0319 & 0.63\\
					5 & 0.09 & 2 & 0.09 & 1 & -0.3 & 0.05 & 0.36\\
					\hline
				\end{tabular}
			\end{center}
		}
	\end{table}
	The  estimates ${\tt err}_v(N)$ and ${\tt err}_x(N)$ for the five Euler schemes are plotted in Figures \ref{fig:fig1}-\ref{fig:fig5} against the corresponding (inverse) step sizes of $2N$. For each model, we show first the convergence behavior of the error  for the CIR process and then for the Heston model. Additionally, we plotted a reference line with a suitable slope together with the error estimates. We also estimated the rate of convergence by the slope of a least squares fit, see Tables \ref{tab:model1} -- \ref{tab:model5}. Here, we only take errors with   (inverse) step sizes $N\in\{2^6,...,2^{15}\}$ into account to get a stable result. For all models, our simulation study shows that the numerical convergence rates do not change significantly, if we extend the simulation from the CIR process to the Heston model. This indicates that the parameters of the CIR process and especially the Feller index solely determine the convergence behavior. 
	\begin{figure}[tbhp]
		\centering
		\subfloat[CIR, $\nu=1.075$]{\label{fig:a}\includegraphics[scale=.21]{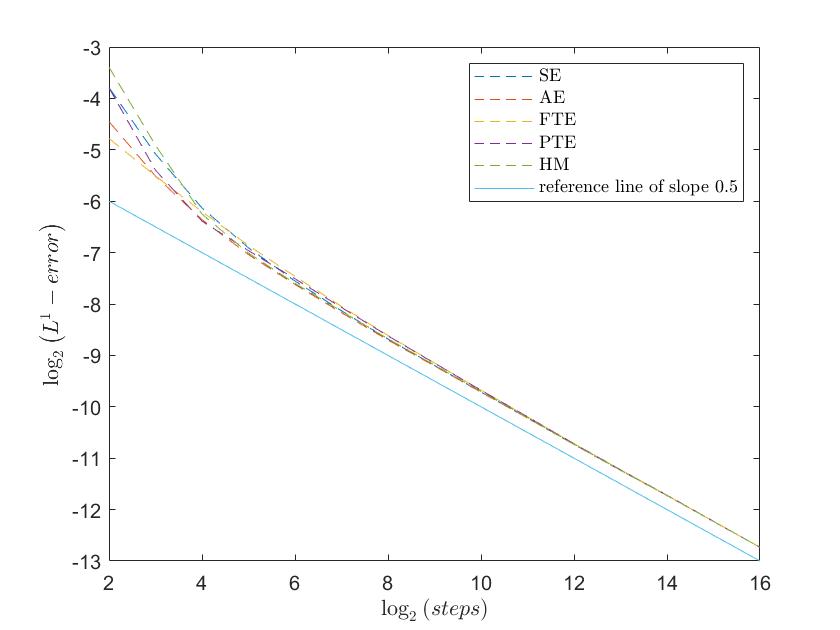}}
		\subfloat[Heston, $\nu=1.075$]{\label{fig:b}\includegraphics[scale=.21]{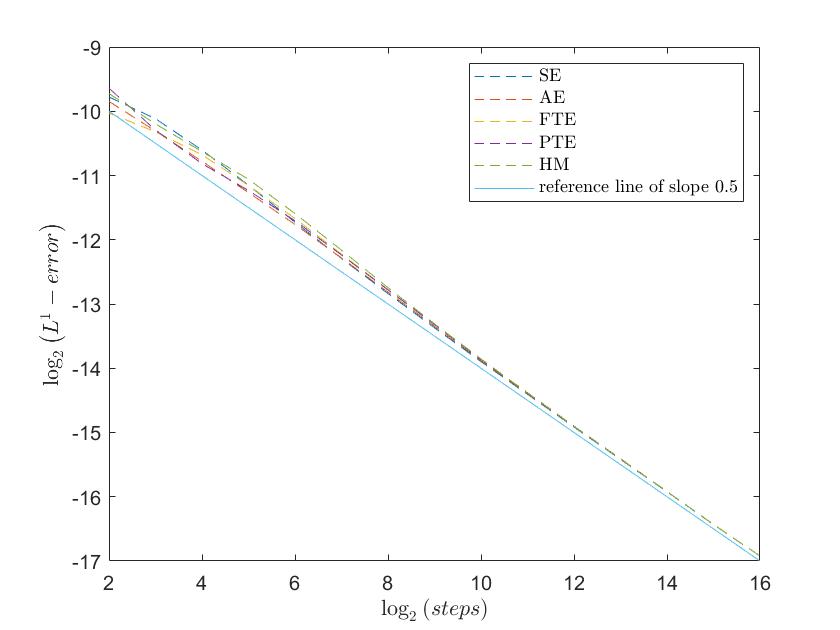}}
		\caption{Error estimates for Model 1}
		\label{fig:fig1}
	\end{figure}
	\begin{table}[tbhp]
		{\footnotesize
			\caption{Estimated convergence rates model 1}\label{tab:model1}
			\begin{center}
				\begin{tabular}{|c|c|c|} \hline
					Scheme & Rate CIR & Rate Heston \\ \hline
					SE & 0.5139 & 0.5175 \\
					AE & 0.5089 & 0.5171 \\ 
					FTE & 0.5236 & 0.5249 \\
					PTE & 0.5204 & 0.5226\\
					HM & 0.5117 & 0.5318\\
					\hline
				\end{tabular}
			\end{center}
		}
	\end{table}
	
	For the first model which has a Feller index around $1$, our main result provides a strong convergence rate of $0.5$. This can be numerically confirmed in Figure \ref{fig:fig1}.
	\begin{figure}[tbhp]
		\centering
		\subfloat[CIR, $\nu=2.0113$]{\label{fig:c}\includegraphics[scale=.21]{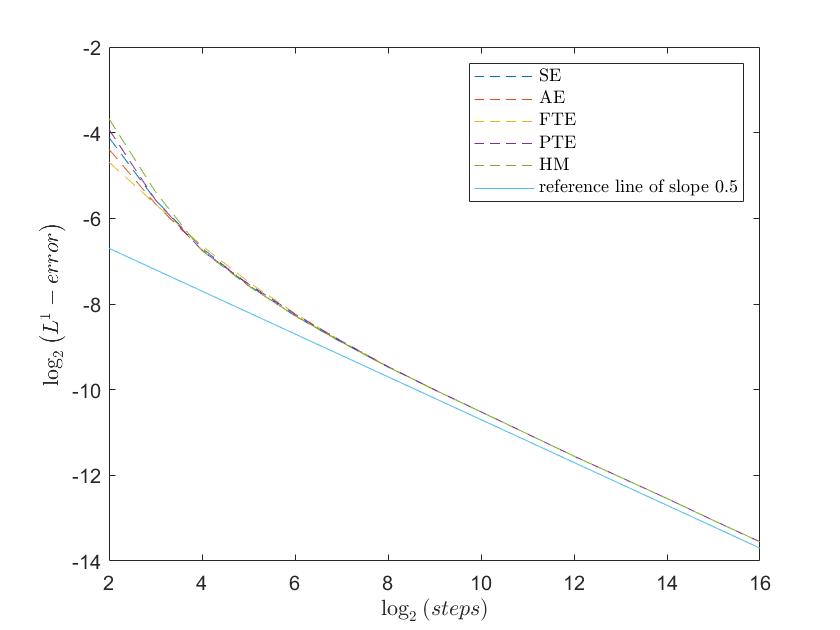}}
		\subfloat[Heston, $\nu=2.0113$]{\label{fig:d}\includegraphics[scale=.21]{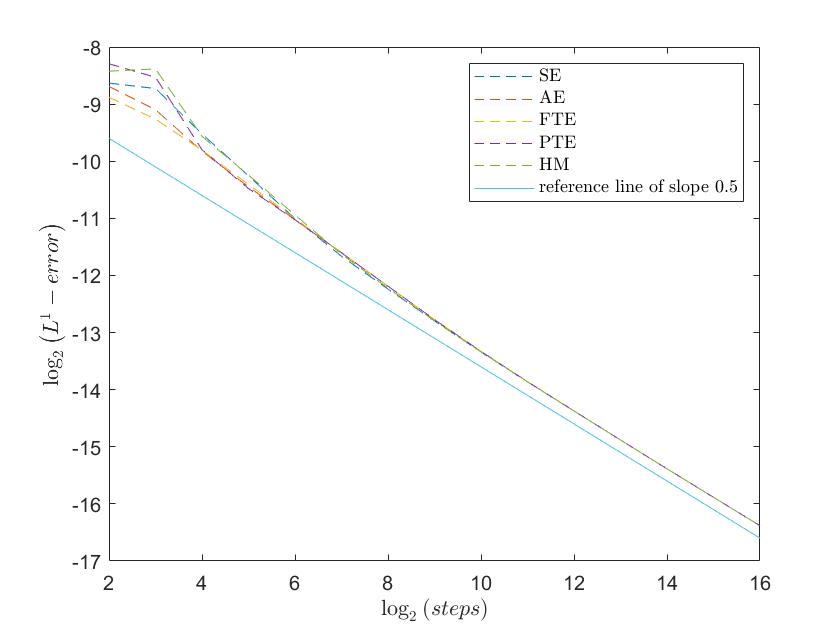}}
		\caption{Error estimates for Model 2}
		\label{fig:fig2}
	\end{figure}
	\begin{table}[tbhp]
		{\footnotesize
			\caption{Estimated convergence rates model 2}\label{tab:model2}
			\begin{center}
				\begin{tabular}{|c|c|c|} \hline
					Scheme & Rate CIR & Rate Heston \\ \hline
					SE & 0.5213 &  0.5310\\
					AE & 0.5216 &  0.5342\\ 
					FTE & 0.5251 &  0.5354\\
					PTE & 0.5243 & 0.5342\\
					HM & 0.5222 & 0.5357\\
					\hline
				\end{tabular}
			\end{center}
		}
	\end{table}
	\begin{figure}[tbhp]
		\centering
		\subfloat[CIR, $\nu=5.2$]{\label{fig:e}\includegraphics[scale=.21]{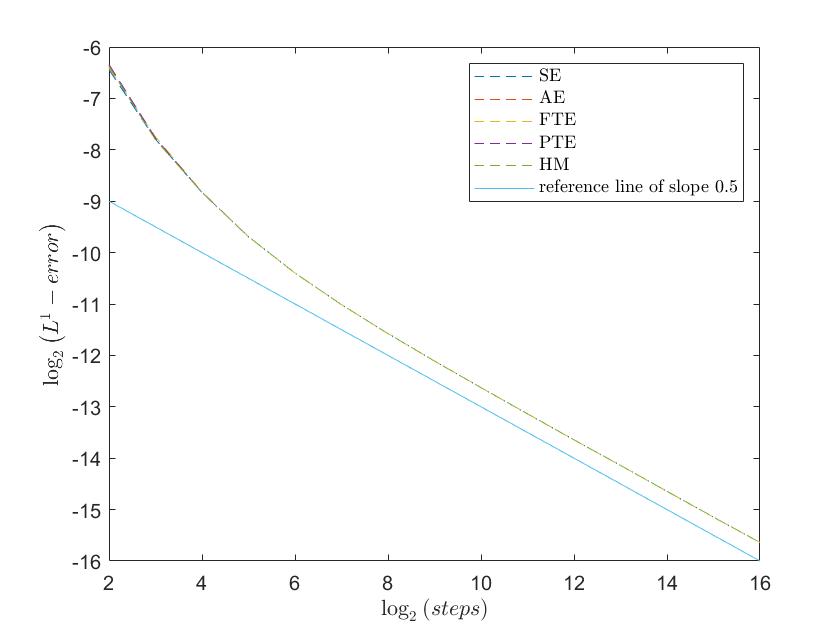}}
		\subfloat[Heston, $\nu=5.2$]{\label{fig:f}\includegraphics[scale=.21]{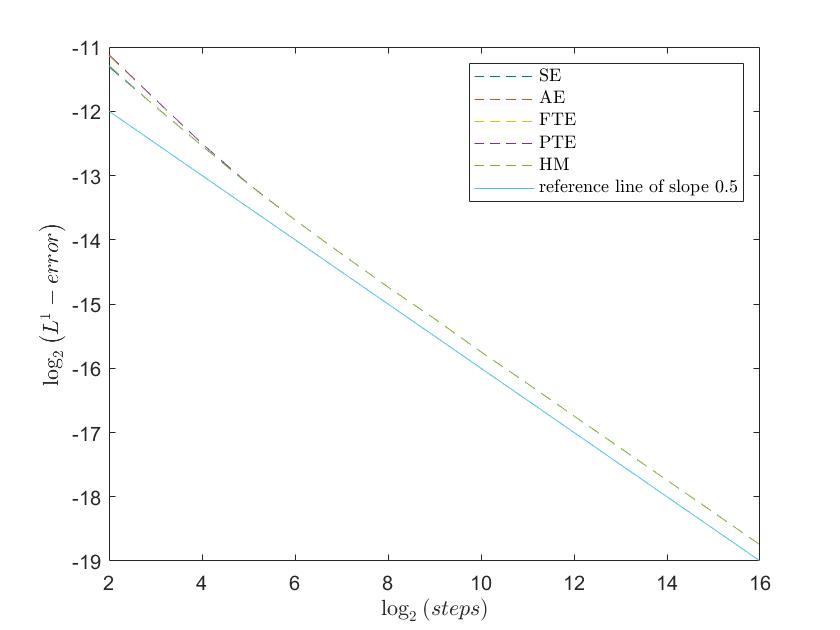}}
		\caption{Error estimates for Model 3}
		\label{fig:fig3}
	\end{figure}
	\begin{table}[tbhp]
		{\footnotesize
			\caption{Estimated convergence rates model 3}\label{tab:model3}
			\begin{center}
				\begin{tabular}{|c|c|c|} \hline
					Scheme & Rate CIR & Rate Heston \\ \hline
					SE & 0.5184 &  0.5038\\
					AE & 0.5184 &  0.5038\\ 
					FTE & 0.5184 &  0.5038\\
					PTE & 0.5184 & 0.5038\\
					HM & 0.5184 & 0.5038\\
					\hline
				\end{tabular}
			\end{center}
		}
	\end{table}
	
	For models 2 and 3, which have higher Feller indices, Figure \ref{fig:fig2} and Figure \ref{fig:fig3} confirm again the expected strong convergence rate of $0.5$. Note that the differences between the different Euler schemes vanish for small step sizes and for high Feller indices. The Euler schemes only differ, if the approximation of the CIR process becomes negative. For small step sizes and for high Feller indices this is unlikely to happen in a Monte Carlo simulation.
	\begin{figure}[tbhp]
		\centering
		\subfloat[CIR, $\nu=0.63$]{\label{fig:g}\includegraphics[scale=.21]{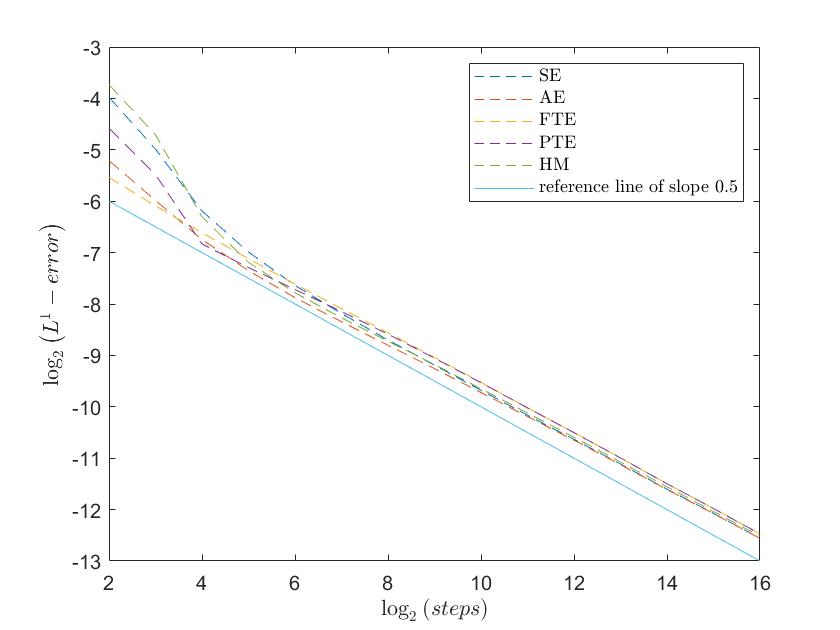}}
		\subfloat[Heston, $\nu=0.63$]{\label{fig:h}\includegraphics[scale=.21]{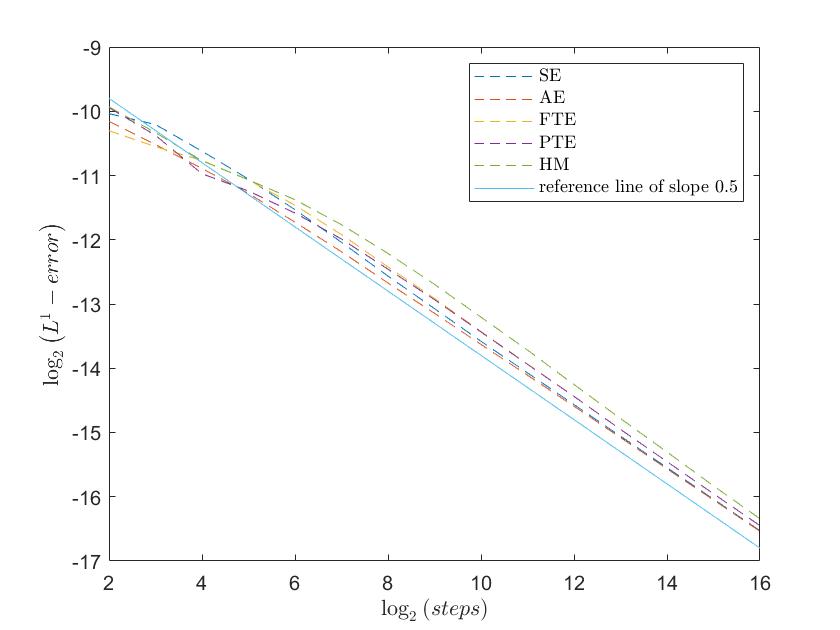}}
		\caption{Error estimates for Model 4}
		\label{fig:fig4}
	\end{figure}
	\begin{table}[tbhp]
		{\footnotesize
			\caption{Estimated convergence rates model 4}\label{tab:model4}
			\begin{center}
				\begin{tabular}{|c|c|c|} \hline
					Scheme & Rate CIR & Rate Heston \\ \hline
					SE & 0.4878 &  0.4988\\
					AE & 0.4675 &  0.4819\\ 
					FTE & 0.4868 &  0.5023\\
					PTE & 0.4790 & 0.4929\\
					HM & 0.4726 & 0.5053\\
					\hline
				\end{tabular}
			\end{center}
		}
	\end{table}
	
	Model 4 has a Feller index of $0.63$ and we have shown that we can expect a strong convergence rate of at least $0.315$ for FTE, PTE and HM. However, looking at Figure \ref{fig:fig4} we can see that the rate is still around $0.5$, even for SE and AE, for which we did not derive a convergence rate in this case.
	\begin{figure}[tbhp]
		\centering
		\subfloat[CIR, $\nu=0.36$]{\label{fig:i}\includegraphics[scale=.21]{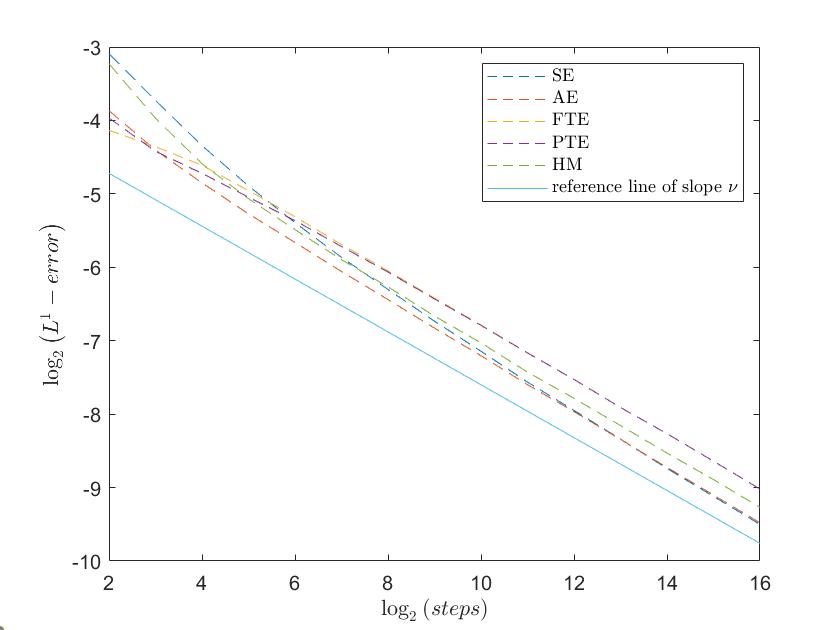}}
		\subfloat[Heston, $\nu=0.36$]{\label{fig:j}\includegraphics[scale=.21]{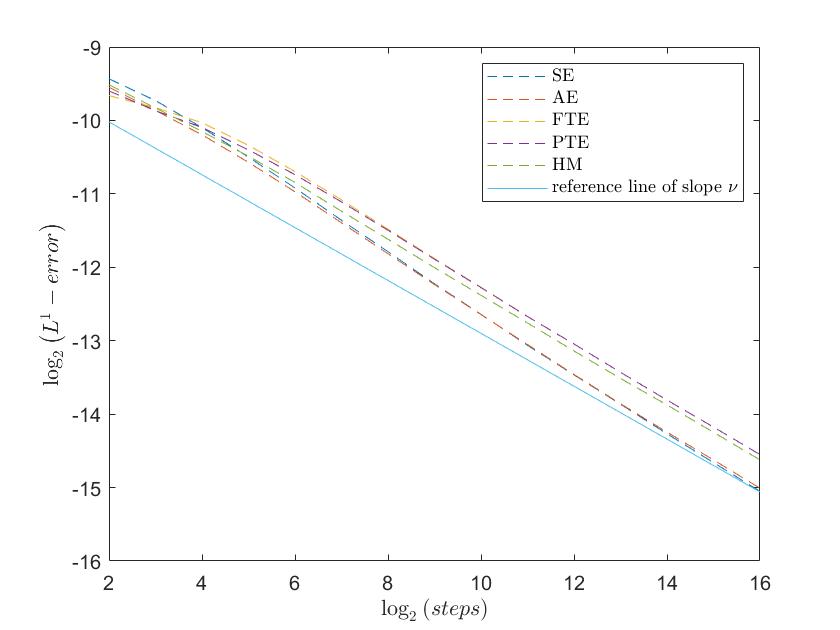}}
		\caption{Error estimates for Model 5}
		\label{fig:fig5}
	\end{figure}
	\begin{table}[tbhp]
		{\footnotesize
			\caption{Estimated convergence rates model 5}\label{tab:model5}
			\begin{center}
				\begin{tabular}{|c|c|c|} \hline
					Scheme & Rate CIR & Rate Heston \\ \hline
					SE & 0.4082 & 0.4128\\
					AE & 0.3811 & 0.4042\\ 
					FTE & 0.3701 & 0.3863 \\
					PTE & 0.3661 & 0.3828\\
					HM & 0.3765 & 0.3770\\
					\hline
				\end{tabular}
			\end{center}
		}
	\end{table}
	
	The last model has the lowest Feller index. Again, we can see an estimated convergence rate of the error that is better than expected. Here, we chose $\nu$ as the slope of the reference line.
	
	 The last two examples indicate that it might be possible to obtain a convergence rate of $\min\{\nu,\frac{1}{2}\}$ for all the considered Euler schemes.
	
	\bigskip
	\bigskip

	{\bf Acknowledgments.} \,\, 
	{\it The authors are grateful to the referees for their insightful comments and remarks, which helped to improve this article.}
	{\it  Annalena Mickel  has been supported by the DFG 
		Research Training Group 1953 "Statistical Modeling of Complex Systems".}

\end{document}